\newtheorem{theorem}{Theorem}
\newtheorem{lemma}{Lemma}
\newtheorem{proposition}{Proposition}
\newtheorem{definition}{Definition}
\newtheorem{corollary}{Corollary}
\newtheorem{remark}{Remark}
\newcommand{\cF}{\ensuremath{\mathcal F}}
\newcommand{\cG}{\ensuremath{\mathcal G}}
\newcommand{\cH}{\ensuremath{\mathcal H}}
\newcommand{\cM}{\ensuremath{\mathcal M}}
\newcommand{\bbE}{{\ensuremath{\mathbb E}} }
\newcommand{\bbP}{{\ensuremath{\mathbb P}} }
\newcommand{\bbR}{{\ensuremath{\mathbb R}} }
\newcommand{\N}{\mathbb{N}}
\newcommand{\be}{\begin{equation}}
\newcommand{\ee}{\end{equation}}
\newcommand{\beq}{\begin{eqnarray}}
\newcommand{\eeq}{\end{eqnarray}}
\newcommand{\1}{{1} \hspace{-0.25 em}{\rm I}}
\newcommand{\R}{\mathbb{R}}
\newcommand{\E}{\mathbb{E}}
\newcommand{\dis}{\displaystyle}
\newcommand{\ced}{\end{proof}}
\newcommand{\red}{}
\newcommand{\blue}{}
\newcommand{\bK}{\bar{K}}
\newcommand{\bY}{\bar{Y}}
\newcommand{\bZ}{\bar{Z}}
\begin{document}
\begin{frontmatter}
\title{Quasilinear Stochastic PDEs with two obstacles: Probabilistic approach}
\date{}
\runtitle{}
\author{\fnms{Laurent}
 \snm{DENIS}\corref{}\ead[label=e1]{ldenis@univ-lemans.fr}}
\thankstext{T1}{The work of the first two authors is supported by the research projects {\it DEFIMATHS} and {\it PANORisk}, r\'egion Pays de la Loire.}
\address{Laboratoire Manceau de Math\'ematiques\\Institut du Risque et de l'Assurance \\Le Mans Universit\'e
\\\printead{e1}}
\author{\fnms{Anis}
 \snm{MATOUSSI}\corref{}\ead[label=e2]{anis.matoussi@univ-lemans.fr}}
\thankstext{t2}{The second author's research is supported by the Chair Risques  \'Emergents ou atypiques en Assurance, under the aegis of Fondation du Risque, a joint initiative by Le Mans Universit\'e, \'Ecole Polytechnique and l'Enreprise MMA. }
\address{Laboratoire Manceau de Math\'ematiques \\ Institut du Risque et de l'Assurance \\Le Mans Universit\'e
  \\\printead{e2}}

\author{\fnms{Jing}
 \snm{ZHANG}\corref{}\ead[label=e3]{zhang\_jing@fudan.edu.cn}}
\thankstext{T3}{The work of the third author is supported by National Natural Science Foundation of China (12031009, 11701404, 11401108) and Key Laboratory of Mathematics for Nonlinear Sciences (Fudan University), Ministry of Education.}
\address{School of Mathematical Sciences \\
Fudan University  \\\printead{e3}}

\runauthor{L. Denis, A. Matoussi and J. Zhang}

\begin{abstract}
We prove an existence and uniqueness result for two-obstacle problem for quasilinear Stochastic PDEs (DOSPDEs for short).
The method is based on the probabilistic interpretation of the solution by using 
the backward doubly stochastic differential equations (BDSDEs for short). 
\end{abstract}

\begin{keyword}
\kwd{stochastic partial differential equations, two-obstacle problem, backward doubly stochastic differential equations, regular potential, regular measure}
\end{keyword}
\begin{keyword}[class=AMS]
\kwd[Primary ]{60H15; 35R60; 31B150}
\end{keyword}

\end{frontmatter}

%
%
%
%
%
%
%
%
%
%
%
%
%

\section{Introduction}
We consider  the following stochastic partial differential equations (SPDEs for short) in $\mathbb{R}^d$:
\small
\begin{equation}\label{SPDE1}
\begin{split}
du_t (x) + \Big[\frac{1}{2} \Delta u_t (x)+ f_t(x,u_t (x),\nabla u_t (x))&+\mbox{div} g_t \left(x,u_t\left(x\right)\nabla u_t\left( x\right) \right)\Big]dt\\
&+h_t(x,u_t(x),\nabla u_t(x))\cdot \overleftarrow{dB}_t = 0\,,
\end{split}
\end{equation}
over the time interval $[0,T]$, with a given final condition $u_T = \Psi$ and $f, g = \big(g_1, \cdots,g_d\big)$,
$h = \big(h_1, \cdots,h_{d^1}\big)$  non-linear random functions. The differential term with $\overleftarrow{dB}_t$
refers to the backward stochastic integral with respect to a $d^1$-dimensional Brownian motion on
$\big(\Omega, \mathcal{F},\mathbb{P}, (B_t)_{t\geq 0} \big)$ (for the backward integral see \cite{Kunita}, Page 111-112). {\red{ We use the backward notation because our approach is fundamentally based on the doubly stochastic framework introduced in the seminal paper by Pardoux and Peng \cite{PardouxPeng94}.    The class of stochastic PDEs as in \eqref{SPDE1}  and their extensions is an important one, since it arises in a number of applications, ranging from asymptotic limits of partial differential equations (PDEs for short) with rapid (mixing) oscillations in time, phase transitions and front propagation in random media with random normal velocities, filtering and stochastic control with partial observations, path-wise stochastic control theory, mathematical finance. The main difficulties with equations like \eqref{SPDE1} are even in the deterministic case, there are no smooth and no explicit solutions in general.\\
The starting point of the theory of SPDEs was with classical solutions in a linear context, wellposedness results having been obtained notably by Pardoux \cite{Pardoux1}, Dawson \cite{Dawson72}, or Krylov and Rozovsk{\u\i}  \cite{KrylovRozovski81}.  In the case when the coefficient $g = 0$, extensions have been obtained later, notably by Pardoux and Peng \cite{PardouxPeng94} (see  also Krylov and Rozovsk{\u\i}  \cite{KrylovRozovski81}, Bally and Matoussi \cite{BM01}) by introducing backward doubly stochastic differential equations (BDSDEs for short), which allowed them to give a nonlinear Feynman-Kac formula for SPDE \eqref{SPDE1} .
The theory of BDSDEs has then been extended in several directions, notably by Matoussi and Scheutzow \cite{MS02} who considered a class of BDSDEs where the nonlinear noise term is given by the more general It\^o-Kunita stochastic integral, thus allowing them to give a probabilistic interpretation of classical and Sobolev solutions of semi--linear parabolic SPDEs driven by Kunita-type martingale fields with specific spatial covariance structure. }}\\
Given two obstacles $\underline{v}$ and $\overline{v}$, our aim in this paper, is to study the two-obstacle problem for SPDE \eqref{SPDE1}, i.e. we want to find a solution of \eqref{SPDE1} which satisfies "$\underline{v}\leq u\leq\overline{v}$". The obstacles should be "regular" in some sense (see Section \ref{hypotheses}). \\
Matoussi and Stoica \cite{MatoussiStoica} have proved an existence and
uniqueness result for the one-obstacle problem for SPDE \eqref{SPDE1}.  The method is based on the probabilistic
interpretation of the solution by using the backward doubly stochastic differential equation.  They have proved that
the solution  is  a pair $\left( u,\nu \right)$ where  $u$ is a predictable
continuous process which takes values in a proper  Sobolev space and
$\nu$ is a random regular measure satisfying minimal Skohorod
condition. In particular  they gave  the regular measure $\nu$ a  probabilistic interpretation in terms of the continuous increasing  process $K$ in the solution $ (Y,Z,K)$  of a  reflected generalized BDSDE. \\
{ \red{The aim of this work is to apply the same approach (as in  \cite{MatoussiStoica})  in the case of two obstacles by introducing  two reflected generalized BDSDEs, allowing  a probabilistic representation of solutions to SPDEs with two obstacles}}.  But, similarly to BSDEs theory, this generalization to the case of two obstacles is not so obvious, and we'll have to impose separability on the obstacles and a kind of Mokobodsky condition (hypothesis {\bf (HO)-(iii)}), see \cite{Cvitan-Kara, HamLepMat97, Hamadene-Hasani,LepeltierMartin} for the BSDEs case. More precisely, we first have to give a sense to the following DOSPDE:
\begin{equation}
\label{DOSPDE0} \left\{\begin{split} &d u (t,x) +\frac{1}{2} \Delta u (t,x)dt  + f(t,x,u_t (x), \nabla u_t (x))dt+\mbox{div} g(t,x,u_t (x), \nabla u_t (x))dt 
\\&\qquad\qquad\qquad\qquad\qquad+h(t,x,u_t (x), \nabla u_t (x))\cdot\overleftarrow{dB}_t +\nu^+ (dt,x)-\nu^- (dt,x)=0,\\
 &\underline{v}(t,x)\leq u(t,x)\leq \overline{v}(t,x),\\
 &\int_0^T\int_{\bbR^d} \left( \tilde{u}(t,x)-\underline{v}(t,x)\right)\nu^+ (dt,dx)=\int_0^T\int_{\bbR^d} \left( \overline{v}(t,x)-\tilde{u}(t,x)\right)\nu^- (dt,dx)=0,\\
 &u_T=\Psi, \end{split}\right. 
\end{equation}
where $\nu^+$ (resp. $\nu^- $) is a measure pushing up (resp. pushing down) the solution when it reaches the lower barrier (resp. upper barrier) and $\tilde{u}$ a quasi-continuous version of the solution. Then, we prove the existence and uniqueness under Lipschitz conditions on the coefficients by using a penalization argument.\\ 
Let us mention that in Denis, Matoussi and Zhang  \cite{DMZ12},  an existence and uniqueness result for the one-obstacle problem of forward quasilinear stochastic PDEs on an open domain in $\mathbb{R}^d$ and driven by an infinite dimensional Brownian motion  is proved. The method is based on analytical technics coming from the parabolic potential theory. The key point was to construct a solution which admits a quasi-continuous version defined outside a polar set and the regular measures which in general are not absolutely continuous w.r.t. the Lebesgue measure, do not charge polar sets. Unfortunately, up to now, we are not able to generalize this analytical approach to the two-obstacle case. {\red Let us explain the difficulties we face in the analytical case: a natural approach consists in considering the solution of the SPDE which is reflected on the lower barrier and penalized on the above barrier:
\begin{equation*}
 \left\{\begin{split} &d u^n (t,x) +\frac{1}{2} \Delta u^n (t,x)dt  + f(t,x,u^n_t (x), \nabla u^n_t (x))dt+\mbox{div} g(t,x,u^n_t (x), \nabla u^n_t (x))dt 
\\&\qquad+h(t,x,u^n_t (x), \nabla u^n_t (x))\cdot\overleftarrow{dB}_t -n(u^n (t,x)-\overline{v}(t,x))^+ +\nu^{+,n} (dt,x)=0,\\
 &\underline{v}(t,x)\leq u^n(t,x),\\
 &\int_0^T\int_{\bbR^d} \left( {u^n}(t,x)-\underline{v}(t,x)\right)\nu^{+,n} (dt,dx)=0,\\
 &u^n_T=\Psi, \end{split}\right. 
\end{equation*}
and to make $n$ tend to $+\infty$. \\
The convergence of the measures $\nu^{+,n}$ and 
$\nu^{-,n}=n(u^n (t,x)-\overline{v}(t,x))^+ dtdx$ is not obvious even if we can control both  the $H^1$-norm of $u^n$ and  the sequence of signed measures $\nu^n =\nu^{+,n}-\nu^{-,n}$  but when passing to the limit, we are  not able to ``separate'' the limit measure in two {\it regular measures} $\nu^+$ and $\nu^-$. Whereas in the probabilistic approach, we succeed thanks to the fact that each regular measure is associated with a continuous increasing process (see \cite{MatoussiStoica}, Theorem 2, assertion (v)) and to pass to the limit we apply a very strong result of convergence for  semimartingales due to Peng and Xu in \cite{PengXu}, known as the stochastic monotonic convergence theorem,  see Lemma \ref{contK+} and the beginning of Section \ref{FundLemma} below.
}\\
The paper is divided as follows: in the second section, we recall the objects coming from the potential theory that we will use and introduce the notion of random extended regular measure. In Section 3, we set the hypotheses and present the main result of this paper. The fourth section is devoted to proving the existence and uniqueness of the solution. To do that we begin with the linear case, and then by Picard iteration we get the result in the nonlinear case. We also establish an It\^o's formula and a comparison theorem. The last section is an Appendix in which we give the proofs of several lemmas. 
\section{Preliminaries}
\label{preliminary}
\subsection{Functional spaces}\label{spaces}
The basic Hilbert space of our framework is $L^2\left( \mathbb{{R}}^d\right) $ and we employ the usual notations for its scalar product and its norm:
$$(u,v):=\int_{\mathbb{R}^d}u(x)v(x)dx\quad\mbox{and}\quad\left\| u\right\|:=\left( \int_{\mathbb{R}^d}u^2(x)dx\right)^{\frac 12}.$$

Our evolution problem will be considered over a fixed time interval $[0,T]$
and the norm for a function $L^2\left( [0,T] \times \mathbb{{R}}^d\right) $ will be denoted by $$\left\| u\right\| _{2,2}=\left(\int_0^T  \int_{\mathbb{R}^d} |u (t,x)|^2 dx dt \right)^{\frac 12}. $$

Another Hilbert space that we use is the first order Sobolev space $H^1(\mathbb{R }^d)= H_0^1(\mathbb{R }^d)$. Its natural scalar product and norm are
$$(u,v) _{H^1(\mathbb{R}^d)}:=(u,v) +(\nabla u, \nabla v)\quad\mbox{and}\quad\left\| u\right\|_{H^1(\mathbb{R}^d)}:=\left(\left\|u\right\|^2+\left\| \nabla u\right\|^2\right)^{\frac 12},$$
where we denote the gradient by $\nabla u (t,x) = (\partial_1 u (t,x),\cdots,\partial_d u (t,x))$.

Of special interest is the subspace $\widetilde{F} \subset L^2( [0,T]; H^1({\mathbb{R}^d})) $ consisting of all functions $u(t,x)$ such that $ t \longmapsto u_t = u(t, \cdot)$ is continuous in $L^2 (\mathbb{R}^d)$. The natural norm on $\widetilde{F}$ is
$$\left\| u\right\| _{T}:=\left( \sup_{ 0 \leq t \leq T}\left\| u_t\right\|^2 + \int_0^T  \|\nabla u_t \|^2 dt \right)^{\frac 12}.$$

The Lebesgue measure on $\mathbb{R}^d$ will be sometimes denoted by $m$. The space of test functions which we employ in the definition of weak solutions  is $ \mathcal{D}_T  := \mathcal{C}^{\infty} \big([0,T]\big) \otimes \mathcal{C}_c^{\infty} \big(\mathbb{R}^d\big)$, where $\mathcal{C}^{\infty} \big([0,T]\big)$ denotes the space of real functions which can be extended as infinite differentiable functions in the neighborhood of $[0,T]$ and $ \mathcal{C}_c^{\infty}\big(\mathbb{R}^d\big)$ is the space of infinite differentiable functions with compact support in $\mathbb{R}^d$.

\subsection{Parabolic potential theory notions and regular measures}\label{measure}

We  present in this subsection the main notions and objects coming from the parabolic potential theory we shall use, for more details we refer to \cite{MatoussiStoica} Section 2. We also introduce what we call {\it extended regular measures}.\\

The operator $\frac{1}{2} \Delta $ is probabilistically interpreted by the Bownian motion in $\mathbb{R}^d$.  We shall view the Brownian motion as a Markov process, $(W_t)_{t \geq 0}$, defined on  the canonical space  $ \Omega' = \mathcal{C }\left([0, \infty ); \mathbb{R}^d \right)$, by $ W_t (\omega) = \omega (t)$, for any $ \omega \in \Omega'$, $t \geq 0$.
 The canonical filtration $ \mathcal{F}_t = \sigma \left( W_s; s \leq t \right)$ is completed by the standard procedure.
We shall also use the backward filtration of the future events $ \mathcal{F}'_t = \sigma \left(W_s; \; \,  s \geq t \right)$ for $t\geq 0$. $\mathbb{P}^0$ is the Wiener measure, which is supported by the set $ \Omega'_0 = \{ \omega \in \Omega', \; \,  w (0) =0 \}$. We also set $ \Pi_0 (\omega) (t) := \omega (t) - \omega (0),\,  t \geq 0$, which defines a map $ \Pi_0 : \Omega' \rightarrow \Omega'_0$. Then  $\Pi := (W_0, \Pi_0 ) \, : \,  \Omega' \rightarrow \mathbb{R}^d \times \Omega'_0$ is a bijection. For each  measure $\mu$ on $\mathbb{R}^d$, the measure 
$\mathbb{P}^{\mu}$ of the Brownian motion started with the initial distribution $\mu$ is given by $$ \mathbb{P}^{\mu} = \Pi^{-1} \left(\mu \otimes \mathbb{P}^0 \right).$$
In particular, for the Lebesgue measure on $\mathbb{R}^d$, which we denote by $ m = dx$, we have
$$ \mathbb{P}^{m} = \Pi^{-1} \left(dx\otimes \mathbb{P}^0 \right),$$
and we'll denote by $\E^m$ the ``expectation'' w.r.t. the measure $ \mathbb{P}^{m}$.\\
It is known that each component $(W^i_t)_{t \geq 0}$ of the Brownian motion, $ i =1,\cdots,d$, is a martingale under any of the measures $\mathbb{P}^{\mu}$. 

 The parabolic operator $\partial _{t}+ \frac{1}{2}\Delta$ can be viewed as the generator of  the time-space Brownian motion, with the state space $\left[ 0,T\right[ \times \mathbb{R}
^{d}$. Its associated
semigroup will be denoted by $(\widetilde{P}_{t}) _{t>0}.$ 
%
It acts as a strongly
continuous semigroup of contractions on the spaces $L^{2}\left( \left[ 0,T
\right[ \times \mathbb{R}^{d}\right): =L^{2}\left( \left[ 0,T\right[
;L^{2}\left( \mathbb{R}^{d}\right) \right) $ and $L^{2}\left( \left[ 0,T
\right[ ;H^{1}\left( \mathbb{R}^{d}\right) \right) .$

The next definition introduces the important notions of {\it quasi-continuity} and {regular potential}:
\begin{definition}
\label{potential} (i) A function $\psi :\left[ 0,T\right] \times \mathbb{R}
^{d}\rightarrow \overline{\mathbb{R}}$ is called quasicontinuous provided
that for each $\varepsilon >0,$ there exists an open set, $D_{\varepsilon
}\subset \left[ 0,T\right] \times \mathbb{R}^{d},$ such that $\psi $ is
finite and continuous on $D_{\varepsilon }^{c}$ and
\begin{equation*}
\mathbb{P}^{m}\left( \left\{ \omega \in \Omega ^{\prime }\;\big|\;\exists \, t\in \left[ 0,T
\right] \ s.t.\left( t,W_{t}\left( \omega \right) \right) \in D_{\varepsilon
}\right\} \right) <\varepsilon .
\end{equation*}

(ii) A function $u:\left[ 0,T\right] \times \mathbb{R}^{d}\rightarrow \left[
0,\infty \right] $ is called a regular potential, provided that its restriction to $[0,T[\times\mathbb{ R}^d$ is
excessive with respect to the  time-space semigroup, it is
quasicontinuous, $u\in \widetilde{F}$ and $\lim_{t\rightarrow T}u_{t}=0$ in $
L^{2}\left( \mathbb{R}^{d}\right) $.
\end{definition}

Observe that if a function $\psi $ is quasicontinuous, then the process $
( \psi _{t}\left( W_{t}\right) ) _{t\in \left[ 0,T\right] }$ is
continuous. 
{\blue The basic properties of the regular potentials are stated in the following theorem (see Theorem 2 in \cite{MatoussiStoica}):}
\begin{theorem}
\label{potentielregulier}
Let $u\in \widetilde{F}.$ Then $u$ has a version which is a regular
potential if and only if there exists a continuous increasing process $
A=\left( A_{t}\right) _{t\in \lbrack 0,T]}$ which is $\big( \mathcal{F}
_{t}\big) _{t\in \lbrack 0,T]}$\textbf{\ -}adapted and such that $
A_{0}=0$, $\mathbb{E}^{m}\left[ A_{T}^{2}\right] <\infty $ and
\begin{equation}
u_{t}(W_{t})=\mathbb{E}^m\left[ A_{T}\,\big|\mathcal{F}_{t}\right]
-A_{t},\;\mathbb{P}^{m}\mathbf{-}a.e.,  \tag{$i$}  \label{caf}
\end{equation}
for any $t\in \left[ 0,T\right] .$ The process $A$ is uniquely determined
by these properties. Moreover, the following relations hold
\begin{equation}
u_{t}\left( W_{t}\right) =A_{T}-A_{t}-\sum_{i=1}^{d}\int_{t}^{T}\partial
_{i}u_{s}\left( W_{s}\right) dW_{s}^{i},\ \mathbb{P}^{m}-a.e.,  \tag{$ii$}
\label{retro}
\end{equation}
\begin{equation}
\left\| u_{t}\right\|^{2}+\int_{t}^{T}\big\|\nabla u_{s}\big\|^2 \,
ds=\mathbb{E}^{m}\left( A_{T}-A_{t}\right) ^{2} , \tag{$iii$}  \label{energ}
\end{equation}
\begin{equation}
\left( u_{0},\mathcal{\varphi }_{0}\right) +\int_{0}^{T} \frac{1}{2} \big( %
\nabla u_{s}, \nabla \mathcal{\varphi }_{s}\big) +\big( u_{s},\partial _{s}%
\mathcal{\varphi }_{s}\big) \, ds=\int_{0}^{T}\int_{\mathbb{R}^{d}}%
\mathcal{\varphi }\left( s,x\right) \nu \left( ds,dx\right) ,  \tag{$iv$}
\label{faible}
\end{equation}%
for any test function $\mathcal{\varphi \in D}_T,$ where $\nu $ is the
measure defined by%
\begin{equation}
\mathcal{\nu }\left( \mathcal{\varphi }\right) =\mathbb{E}^{m}\int_{0}^{T}\mathcal{%
\varphi }\left( t,W_{t}\right) dA_{t},\ \mathcal{\varphi \in C}_{c}\left( %
\left[ 0,T\right] \times \mathbb{R}^{d}\right) , \tag{$v$}  \label{mesure}
\end{equation}
and $\mathcal{C}_c ([0,T]\times \R^d)$ is the set of  continuous functions on $[0,T]\times\R^d$ with compact support.
\end{theorem}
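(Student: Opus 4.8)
\emph{The plan} is to prove the two implications by passing between the analytic picture of $u$ (excessive for the time--space semigroup, quasicontinuous, in $\widetilde{F}$, vanishing at $T$) and the probabilistic one, the potential representation $(i)$ along time--space Brownian paths. For the necessity I would first check that $M_t:=u_t(W_t)$, $t\in[0,T]$, is a nonnegative continuous supermartingale under $\mathbb{P}^m$ with respect to $(\mathcal{F}_t)$, with $M_T=0$: nonnegativity is the definition of a potential; continuity is the remark following Definition~\ref{potential} applied to the quasicontinuous $u$; the supermartingale inequality $\mathbb{E}^m[u_{t+s}(W_{t+s})\mid\mathcal{F}_t]=\widetilde{P}_s u(t,W_t)\le u_t(W_t)$ is excessivity together with the Markov property of $W$ under $\mathbb{P}^m$; and $M_T=0$ because $\mathbb{E}^m[M_t^2]=\|u_t\|^2\to 0$ as $t\to T$ and the paths of $M$ are continuous. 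Since moreover $\widetilde{P}_s u\to 0$ as $s\to\infty$, $u$ is a potential in the potential--theoretic sense, so the Riesz representation writes $u$ as the Green potential of a unique measure $\nu$, which charges no polar set precisely because $u$ is quasicontinuous; this makes $\nu$ the Revuz measure of a unique positive continuous additive functional $A$ with $A_0=0$, giving $(v)$ by the Revuz formula, while the classical potential identity $u_t(W_t)=\mathbb{E}^m[A_T-A_t\mid\mathcal{F}_t]$ is exactly $(i)$; the bound $\mathbb{E}^m[A_T^2]<\infty$ would be extracted from $u\in\widetilde{F}$. (Equivalently, one may obtain $A$ from the Doob--Meyer decomposition $M_t=N_t-A_t$, whereupon $M_T=0$ forces $N_t=\mathbb{E}^m[A_T\mid\mathcal{F}_t]$.)

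For the remaining relations: for $(ii)$ I would invoke the Fukushima-type decomposition, namely that since $u\in\widetilde{F}\subset L^2([0,T];H^1(\mathbb{R}^d))$ the martingale additive functional part of $u_s(W_s)$ is $\sum_{i=1}^d\int \partial_i u_s(W_s)\,dW_s^i$; combining this with $(i)$ (so that the martingale equals $\mathbb{E}^m[A_T\mid\mathcal{F}_\cdot]$) and rearranging yields $(ii)$. Relation $(iii)$ then follows by writing $A_T-A_t=u_t(W_t)+\sum_i\int_t^T\partial_i u_s(W_s)\,dW_s^i$, squaring, conditioning on $\mathcal{F}_t$ (the cross term vanishes, the square is handled by the It\^o isometry), then taking $\mathbb{E}^m$ and using that $W_s\sim m$ so $\mathbb{E}^m[|\nabla u_s(W_s)|^2]=\|\nabla u_s\|^2$. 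For $(iv)$ I would apply It\^o's product formula to $u_s(W_s)\,\varphi(s,W_s)$ for $\varphi\in\mathcal{D}_T$, integrate on $[0,T]$, take $\mathbb{E}^m$ (stochastic integrals drop out, the terminal term vanishes since $u_T=0$), integrate by parts in $x$ using the compact support of $\varphi$, and rewrite $\mathbb{E}^m\int_0^T\varphi(s,W_s)\,dA_s$ via $(v)$; the $\tfrac12\Delta\varphi$ term and the quadratic-covariation term combine into $\tfrac12(\nabla u_s,\nabla\varphi_s)$, which is $(iv)$. Uniqueness of $A$ is immediate: if $A,A'$ both satisfy $(i)$, then $A_t-A'_t=\mathbb{E}^m[A_T-A'_T\mid\mathcal{F}_t]$ is a continuous martingale of finite variation, hence constant, hence identically $0$.

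For the converse, given $A$ with $A_0=0$, $\mathbb{E}^m[A_T^2]<\infty$ and $(i)$, I would set $M_t:=\mathbb{E}^m[A_T\mid\mathcal{F}_t]-A_t$; monotonicity of $A$ makes $M$ a nonnegative continuous supermartingale with $M_T=0$, and $M_t=u_t(W_t)$ $\mathbb{P}^m$-a.e.\ by hypothesis. The supermartingale inequality gives $\widetilde{P}_s u\le u$ off an $m\otimes dt$-null set for each $s>0$; since $s\mapsto\widetilde{P}_s u$ is then decreasing and $\widetilde{P}_s u\to u$ in $L^2$ as $s\downarrow 0$, the increasing limit $\widetilde{u}:=\lim_{s\downarrow 0}\widetilde{P}_s u$ is an excessive version of $u$. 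Continuity of $M$ forces the path process of $\widetilde{u}$ to have no jumps, so $\widetilde{u}$ is quasicontinuous; and $\|\widetilde{u}_t\|^2=\mathbb{E}^m[M_t^2]\to\mathbb{E}^m[M_T^2]=0$ by dominated convergence (using $\mathbb{E}^m[A_T^2]<\infty$), so $\lim_{t\to T}\widetilde{u}_t=0$ in $L^2(\mathbb{R}^d)$; membership $\widetilde{u}\in\widetilde{F}$ is part of the hypothesis. Hence $\widetilde{u}$ is a regular potential.

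The genuine difficulties are two. First, the measure-theoretic bookkeeping forced by the infinite reference measure $\mathbb{P}^m$: the Riesz representation / Doob--Meyer, the conditional expectations in $(i)$, and the identity $\mathbb{E}^m[M_t^2]=\|u_t\|^2$ all have to be made rigorous in the $\sigma$-finite setting, and $\mathbb{E}^m[A_T^2]<\infty$ has to be read off from $u\in\widetilde{F}$ rather than from an $L^1$-bound on $u_0$. Second, the identification of the martingale part of $u_s(W_s)$ with $\sum_i\int\partial_i u_s(W_s)\,dW_s^i$ for $u\in\widetilde{F}$, and the Revuz correspondence linking $A$ with the measure $\nu$ of $(iv)$--$(v)$ — this Dirichlet-form input is the heart of the statement; once it is in place, the supermartingale property from excessivity, the algebra behind $(ii)$--$(iii)$, and the uniqueness of $A$ are routine.
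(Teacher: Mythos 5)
The paper offers no proof of this theorem: it is quoted as Theorem~2 of \cite{MatoussiStoica}, so there is nothing in the present text to compare your argument against. That said, your outline is the standard potential-theoretic proof and, as far as I can tell, the route the cited source takes: the supermartingale property of $u_t(W_t)$ from excessivity, Doob--Meyer (equivalently, the classical construction of the additive functional of a potential) for $(i)$, a Fukushima-type identification of the martingale part for $(ii)$, the It\^o isometry and invariance of $m$ for $(iii)$, It\^o's product rule plus the Revuz formula for $(iv)$--$(v)$, and the ``continuous finite-variation martingale'' argument for uniqueness, with the converse handled by the regularization $\widetilde{P}_s u\uparrow \widetilde{u}$. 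You also correctly locate the genuine work: the $\sigma$-finite Doob--Meyer bookkeeping, the identification of the martingale additive functional, and extracting $\mathbb{E}^m[A_T^2]<\infty$ from $u\in\widetilde{F}$.

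Two justifications are thinner than they should be. First, in the necessity direction, ``$\nu$ charges no polar set because $u$ is quasicontinuous'' is not what gives continuity of $A$: in general, not charging polar sets only excludes discontinuities of $A$ at fixed or predictable times, while continuity of $A$ corresponds to $\nu$ not charging \emph{semipolar} sets. Here one should either invoke that semipolar sets of (time-space) Brownian motion are polar, or --- more directly, and consistent with your own Doob--Meyer alternative --- observe that the supermartingale $u_t(W_t)$ has continuous paths (by quasicontinuity), so its compensator is continuous. Second, in the converse direction, ``continuity of $M$ forces $\widetilde{u}$ to be quasicontinuous'' is the nontrivial implication from fine continuity plus path-continuity of the composed process to quasicontinuity in the sense of Definition~\ref{potential}; it requires a capacity estimate, not merely the absence of jumps. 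Both facts are standard, so the outline stands, but they are exactly the inputs a complete proof must supply rather than assert.
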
 

We now introduce the class of measures which intervene in the notion of
solution to the one-obstacle problem.

\begin{definition}
A nonnegative Radon measure $\nu $ defined on $\left[ 0,T\right] \times
\mathbb{R}^{d}$ is called regular provided that there exists a regular
potential $u$ such that relation (iv) from the above theorem is
satisfied.
\end{definition}
{\blue We denote by $\cM([0,T]\times\bbR^d)$ the collection of all regular measures on $[0,T]\times\bbR^d$ and by $\mathcal{A}_2$ the set of continuous additive functionals $A$ associated to regular measures by relation {\it (v)} above.}

As a consequence of the preceding theorem, we see that the regular measures are
always represented as in relation {\it (v)} of the theorem, with a certain
increasing process. We also note the following properties of a regular
measure, with the notations from the theorem.

\begin{enumerate}
\item A set $B\in \mathcal{B}\left( \left[ 0,T\right] \times \mathbb{R}
^{d}\right) $ satisfies the relation $\nu \left( B\right) =0$ if and only if
$\int_{0}^{T}{\bf 1}_{B}\left( t,W_{t}\right) dA_{t}=0,\ \mathbb{P}^{m}-a.e..$

\item If a set $B\in \mathcal{B}\left( \left] 0,T\right[ \times \mathbb{R}
^{d}\right) $ is polar, in the sense that
\begin{equation*}
\mathbb{P}^m\left( \left\{ \omega \in \Omega ^{\prime }\;\big|\;\exists t\in \left[ 0,T\right]
,\left( t,W_{t}\left( \omega \right) \right) \in B\right\} \right) =0,
\end{equation*}
then $\nu \left( B\right) =0.$

\item If $\psi ^{1},\psi ^{2}:\left[ 0,T\right] \times \mathbb{R}
^{d}\rightarrow \overline{\mathbb{R}}$ are Borel measurable and such that $
\psi ^{1} (t,x)\geq \psi^{2} (t,x),\; dt\otimes dx-a.e.,$   and  the processes $\left( \psi _{t}^{i}\left( W_{t}\right) \right)
_{t\in \left[ 0,T\right] },i=1,2$, are a.s. continuous, then one has $\nu
\left( \psi ^{1} <  \psi ^{2}\right) =0.$
\end{enumerate}
In the case of two obstacles we are obliged to consider a wider class of measures, that's why we introduce the following definition:
\begin{definition} We denote by $\mathcal{A}_1$ the set of continuous increasing processes $(A_t )_{t\in [0,T]}$ with $A_0=0$ and $\E^m [A_T ]<+\infty$ which are  uniform limit of a sequence of elements in $\mathcal{A}_2$   in the  sense that there exists a sequence $(A^n)$ in $\mathcal{A}_2$ such that for all $t\in [0,T]$:
$$\lim_{n\rightarrow +\infty}\sup_{t\in [0,T]}|A^n_t - A_{t}|=0,\ \makebox{ a.e.}.$$
\end{definition}
Let us remark that as a consequence of this definition, any element in $\mathcal{A}_{1}$ is an additive functional.\\
Naturally such a process is associated with a measure:
\begin{proposition}\label{DefMeasure} Let $A\in\mathcal{A}_1$, then there exists a unique Radon measure $\nu$ on $[0,T]\times \R^d$ such that
\begin{equation}\label{FormuleMeasure}\forall \varphi \in \mathcal{C}_c ([0,T]\times \R^d),\  \nu (\varphi )=\E^m \int_0^T \varphi (t,W_t )dA_t .
\end{equation}
Moreover, $\nu$ does not charge polar sets. We shall call such  measure an {\rm extended regular measure}.
\end{proposition}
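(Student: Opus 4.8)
The plan is to define the candidate measure $\nu$ as a limit of the regular measures $\nu^n$ attached to the approximating sequence $A^n \in \mathcal{A}_2$, and to check that the limit is independent of the chosen sequence, is $\sigma$-additive (hence Radon), and does not charge polar sets. First, fix $A\in\mathcal{A}_1$ and a sequence $(A^n)\subset\mathcal{A}_2$ with $\sup_{t\in[0,T]}|A^n_t-A_t|\to 0$ a.e.; by relation (v) of Theorem \ref{potentielregulier} each $A^n$ defines a regular measure $\nu^n$ via $\nu^n(\varphi)=\E^m\int_0^T\varphi(t,W_t)\,dA^n_t$. For a fixed $\varphi\in\mathcal{C}_c([0,T]\times\R^d)$, I would show that $\nu^n(\varphi)$ is Cauchy in $\R$: writing $\nu^n(\varphi)-\nu^k(\varphi)=\E^m\int_0^T\varphi(t,W_t)\,d(A^n_t-A^k_t)$ and integrating by parts in $t$ (the process $t\mapsto\varphi(t,W_t)$ has bounded variation only in its time argument, so one should instead use $\int_0^T\varphi\,dA^n = \varphi(T,\cdot)A^n_T-\int_0^T A^n_t\,d\varphi(t,W_t)$ along the Brownian path, controlling the $dW$ part via the martingale/energy bound), one bounds the difference by $C_\varphi\,\E^m[\sup_t|A^n_t-A^k_t|] + (\text{martingale terms})$. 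Uniform integrability of $\sup_t|A^n_t-A_t|$ — which follows because the $A^n$ are uniformly bounded in $L^2(\mathbb{P}^m)$ by the energy identity (iii) applied to the potentials, combined with a.e. convergence — upgrades a.e. convergence to $L^1$ convergence, so $\nu^n(\varphi)$ converges. Define $\nu(\varphi):=\lim_n\nu^n(\varphi)$.

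Next I would verify that $\nu$ is a positive linear functional on $\mathcal{C}_c([0,T]\times\R^d)$: linearity is immediate, and positivity follows since each $A^n$ is increasing, so $\nu^n(\varphi)\geq 0$ whenever $\varphi\geq 0$, and this passes to the limit. By the Riesz representation theorem, $\nu$ extends uniquely to a Radon measure on $[0,T]\times\R^d$ satisfying \eqref{FormuleMeasure}. Independence of the approximating sequence is then automatic: if $(\tilde A^n)$ is another such sequence, interleaving it with $(A^n)$ produces a sequence still converging uniformly (a.e.) to $A$, whose associated measures must converge, forcing the two limits to agree. Uniqueness of $\nu$ satisfying \eqref{FormuleMeasure} is clear since $\mathcal{C}_c$ is measure-determining.

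Finally, for the polar-set property: let $B\in\mathcal{B}(]0,T[\times\R^d)$ be polar, i.e. $\mathbb{P}^m(\exists t:(t,W_t)\in B)=0$. By property 2 of regular measures, $\nu^n(B)=0$ for every $n$, equivalently $\int_0^T\mathbf{1}_B(t,W_t)\,dA^n_t=0$ $\mathbb{P}^m$-a.e.. To conclude $\nu(B)=0$ I would pass to the limit in these additive functionals: the a.e. uniform convergence $A^n\to A$ implies $\int_0^T\mathbf{1}_B(t,W_t)\,dA^n_t\to\int_0^T\mathbf{1}_B(t,W_t)\,dA_t$ after first approximating $\mathbf{1}_B$ from above by open sets $O_k\downarrow$ and by continuous functions, using that $A^n\to A$ uniformly together with monotonicity to control the measures $dA^n$ tested against continuous functions, and hence $\int_0^T\mathbf{1}_B(t,W_t)\,dA_t=0$ a.e.; since $\nu(B)=\E^m\int_0^T\mathbf{1}_B(t,W_t)\,dA_t$ by monotone class extension of \eqref{FormuleMeasure}, we get $\nu(B)=0$.

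The main obstacle I anticipate is the interchange of limits in the two places where $\mathcal{C}_c$-testing must be promoted to Borel-set (indicator) testing — namely, extending \eqref{FormuleMeasure} from continuous $\varphi$ to $\mathbf{1}_B$, and controlling $\int_0^T\mathbf{1}_B(t,W_t)\,dA^n_t$ uniformly in $n$. The clean way around this is to use the uniform convergence $\sup_t|A^n_t-A_t|\to 0$ to obtain weak convergence of the path-measures $dA^n(\omega,\cdot)\to dA(\omega,\cdot)$ on $[0,T]$ for a.e. $\omega$ (a consequence of pointwise convergence of increasing functions at continuity points of the limit), then use the outer-regularity of $\nu$ and the Portmanteau theorem applied pathwise, combined with dominated convergence in $\omega$ using the uniform $L^2(\mathbb{P}^m)$ bound on $A^n_T$. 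Everything else is bookkeeping with the Riesz representation theorem and the properties of regular measures already established in the excerpt.
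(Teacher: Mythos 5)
There is a genuine gap in your construction, and more fundamentally the construction itself is a detour that the statement does not require. Your plan defines $\nu$ as the limit of the regular measures $\nu^n$ attached to an approximating sequence $(A^n)\subset\mathcal{A}_2$, and the Cauchy/convergence argument for $\nu^n(\varphi)$ rests on the claim that $\sup_{t}|A^n_t-A_t|$ is uniformly integrable because ``the $A^n$ are uniformly bounded in $L^2(\mathbb{P}^m)$ by the energy identity.'' The definition of $\mathcal{A}_1$ gives you only almost-everywhere uniform convergence of $A^n$ to $A$ and $\E^m[A_T]<+\infty$; it provides no uniform bound on $\E^m[(A^n_T)^2]$ (the energy identity (iii) expresses $\E^m[(A^n_T)^2]$ in terms of the associated potentials $u^n$, which need not be bounded in $\widetilde F$). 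Without that bound the passage from a.e. convergence to convergence of $\nu^n(\varphi)$ does not close, and the extra care needed because $\mathbb{P}^m$ is an infinite measure makes the integration-by-parts route (with its martingale remainder) even more delicate than you acknowledge.

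The missing observation is that formula \eqref{FormuleMeasure} already makes sense directly for $A$ itself: since $A$ is a continuous increasing process with $A_0=0$ and $\E^m[A_T]<+\infty$ by the very definition of $\mathcal{A}_1$, the map $\varphi\mapsto\E^m\int_0^T\varphi(t,W_t)\,dA_t$ is a well-defined positive linear functional on $\mathcal{C}_c([0,T]\times\R^d)$, bounded by $\|\varphi\|_\infty\,\E^m[A_T]$. This is exactly the paper's proof: Daniell's theorem (equivalently, Riesz representation) yields the unique Radon measure $\nu$, and by uniqueness the representation extends to Borel sets, $\nu(B)=\E^m\int_0^T\mathbf{1}_B(t,W_t)\,dA_t$. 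The polar-set property is then immediate without any limiting procedure: if $B$ is polar, then for $\mathbb{P}^m$-almost every path one has $(t,W_t)\notin B$ for all $t$, so the integrand vanishes identically and $\nu(B)=0$. The approximating sequence in the definition of $\mathcal{A}_1$ is needed only to conclude that $A$ is an additive functional (as remarked in the paper), not to build the measure; once you drop the limit construction, both the existence/uniqueness part and the polar-set part of your argument become one-line statements and the problematic uniform-integrability step disappears.
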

\begin{proof}
As a consequence of  Daniell's theorem, it is clear that the relation above defines a unique Radon measure on $[0,T]\times \R^d$ and that by uniqueness for any Borel set $B\subset [0,T]\times \R^d$ we have
$$\nu (B)=\E^m \int_0^T{\bf 1}_B (t,W_t )dA_t,$$
ensuring that $\nu$ does not charge polar sets.
\end{proof}

\subsection{The probabilistic interpretation of the divergence term}

Let  $f$ and  $|g|$ belong to $ L^2 \left([0,T] \times \mathbb{R}^d \right)$, $\Psi$ be in $L^2 (\R^d )$ and $ u \in \widetilde{F}$ be the solution of the deterministic  equation 
\begin{equation}
\label{PDE1} \left\{\begin{split}& \partial_t u (t,x) +
 \;  \frac{1}{2} \Delta u (t,x)  + f(t,x)+ \mbox{div} g(t,x) =0,\\
 &u_T=\Psi. \end{split}\right.
\end{equation}
 Let us denote by
\begin{equation}
\label{representation:divg}
\int_s^t g_r * dW_r = \sum_{i=1}^d  \left( \int_s^t g_i (r, W_r) dW_r^i + \int_s^t g_i (r, W_r)\overleftarrow{dW_r^i} \right).
\end{equation}
Then one has the following representation (see Theorem 3.2 in \cite{Stoica}):
\begin{theorem}
The following relation holds $\mathbb{P}^m\textbf{-}a.e.$ for any $0 \leq s \leq t \leq T$,
\begin{equation}
\label{divergence:term}
u_t (W_t) - u_s(W_s) = \sum_{i=1}^d  \int_s^t \partial_i  u_r (W_r) dW_r^i   - \int_s^t f_r (W_r) dr + \frac{1}{2}\int_s^t g_r * dW_r\,.
\end{equation}
\end{theorem}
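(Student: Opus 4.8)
## Proof Plan

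The plan is to derive the representation \eqref{divergence:term} by a density/approximation argument, reducing to the case of smooth data where the divergence term $\mathrm{div}\,g$ can be absorbed into the drift and the classical It\^o--Dynkin formula for the time-space Brownian motion applies. First I would treat the \emph{smooth case}: suppose $g \in \mathcal{C}^{1,2}$ (or at least $g_t(\cdot) \in \mathcal{C}^1$ with enough integrability) and $f$ smooth, so that $\mathrm{div}\,g$ is an honest function and the PDE \eqref{PDE1} reads $\partial_t u + \tfrac12\Delta u + (f + \mathrm{div}\,g) = 0$ with a classical solution $u \in \widetilde F \cap \mathcal{C}^{1,2}$. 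Applying the It\^o formula to $u_t(W_t)$ under $\mathbb{P}^m$ along the time-space Brownian motion gives
\begin{equation*}
u_t(W_t) - u_s(W_s) = \sum_{i=1}^d \int_s^t \partial_i u_r(W_r)\, dW_r^i + \int_s^t \big(\partial_r u_r + \tfrac12 \Delta u_r\big)(W_r)\, dr,
\end{equation*}
and the drift term equals $-\int_s^t (f_r + \mathrm{div}\,g_r)(W_r)\, dr$. It then remains to identify $-\int_s^t (\mathrm{div}\,g_r)(W_r)\, dr$ with $\tfrac12 \int_s^t g_r * dW_r$, i.e. with the sum of a forward and a backward stochastic integral of $g$ against $W$. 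This is exactly the time-reversal/integration-by-parts identity for the Brownian motion under the (invariant, $\sigma$-finite) measure $\mathbb{P}^m$: for smooth $g$, the forward integral $\int_s^t g_i(r,W_r)\,dW_r^i$ and the backward integral $\int_s^t g_i(r,W_r)\,\overleftarrow{dW_r^i}$ differ precisely by the bracket-type correction $-\int_s^t \partial_i g_i(r,W_r)\,dr$, and averaging the two representations of the martingale part yields the factor $\tfrac12$ in front of $g_r * dW_r$ together with the $-\mathrm{div}\,g$ drift. Here is where I would invoke the structure already recorded in Theorem \ref{potentielregulier}, relations \eqref{retro} and \eqref{energ}: the backward martingale decomposition of a regular potential is the analytic avatar of this duality, so the $s,t$ version follows by the same reasoning applied on $[s,t]$.

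The second step is the \emph{passage to the limit}. Given $f, |g| \in L^2([0,T]\times\mathbb{R}^d)$ and $\Psi \in L^2(\mathbb{R}^d)$, pick smooth approximations $f^n \to f$, $g^n \to g$ in $L^2([0,T]\times\mathbb{R}^d)$ (componentwise for $g$) and $\Psi^n \to \Psi$ in $L^2(\mathbb{R}^d)$, and let $u^n \in \widetilde F$ solve \eqref{PDE1} with data $(f^n, g^n, \Psi^n)$. The energy estimate for the deterministic equation — which is the $\nu \equiv 0$, no-noise specialization of the a priori bounds behind Theorem \ref{potentielregulier} — gives $\|u^n - u^m\|_T \to 0$, hence $u^n \to u$ in $\widetilde F$, so $\sup_t \|u^n_t - u_t\| \to 0$ and $\|\nabla u^n - \nabla u\|_{2,2} \to 0$. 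Each term in \eqref{divergence:term} for $u^n$ then converges to the corresponding term for $u$: the left side converges in $L^2(\mathbb{P}^m)$ uniformly in $t$ because $\mathbb{E}^m[(u^n_t - u_t)(W_t)]^2 = \|u^n_t - u_t\|^2$ by the fact that $W_t$ has law $m$ under $\mathbb{P}^m$; the stochastic integral term $\sum_i \int_s^t \partial_i u^n_r(W_r)\,dW_r^i$ converges in $L^2(\mathbb{P}^m)$ by It\^o's isometry and $\|\nabla u^n - \nabla u\|_{2,2}\to 0$; the $f$-term converges in $L^2(\mathbb{P}^m)$ by Cauchy--Schwarz and $\|f^n - f\|_{2,2}\to 0$; and the term $\int_s^t g^n_r * dW_r$ converges in $L^2(\mathbb{P}^m)$ because both the forward and backward stochastic integrals of $g^n_i$ against $W^i$ are isometric in the appropriate sense under $\mathbb{P}^m$ (this uses that $W$ is a martingale under every $\mathbb{P}^\mu$, as noted in the excerpt, together with the corresponding property of the backward filtration $\mathcal{F}'_t$). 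Passing to a subsequence converging $\mathbb{P}^m$-a.e. for every rational $(s,t)$ and using continuity in $(s,t)$ of all five terms extends the identity to all $0 \le s \le t \le T$.

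The main obstacle I anticipate is making rigorous sense of the object $\int_s^t g_r * dW_r$ for merely $L^2$ integrands $g$, and in particular the \emph{backward integral} $\int_s^t g_i(r,W_r)\,\overleftarrow{dW_r^i}$: one must verify that the time-reversed process is again a Brownian-type martingale under $\mathbb{P}^m$ with respect to $(\mathcal{F}'_t)$, so that the backward It\^o integral is well defined and isometric, and that the combination $g * dW$ is stable under $L^2$-limits even though neither the forward nor the backward piece is individually adapted in a way compatible with naive approximation. This is precisely the content of \cite{Stoica}, Theorem 3.2, and the honest work is in controlling the backward integral; once that tool and the smooth-case identity are in hand, the approximation argument is routine. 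I would therefore present the smooth-case computation and the duality identity in detail, cite \cite{Stoica} for the well-posedness of the backward integral and its isometry, and then run the limiting argument.
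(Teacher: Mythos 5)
The paper does not actually prove this statement: it is imported verbatim from \cite{Stoica} (Theorem 3.2 there), and the only in-house material is the Remark that follows, recording the identity $\int_s^t g_r * dW_r=-2\int_s^t \mathrm{div}\, g(r,W_r)\,dr$ for smooth $g$ and the time-reversal interpretation of $\overleftarrow{dW}$ under $\mathbb{P}^m$. Your architecture --- smooth case via the time-space It\^o formula plus the identification $\tfrac12\int g*dW=-\int \mathrm{div}\,g\,dr$, followed by an $L^2$ density passage using the energy estimate and the forward/backward isometries --- is exactly the standard route and is essentially what Stoica does, and you correctly locate the genuine difficulty (well-posedness and $L^2$-stability of the backward integral) in the cited reference. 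So the proposal is acceptable as a reconstruction.

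One point of bookkeeping deserves correction, because as literally written your smooth-case step does not close. You assert that the forward and backward integrals ``differ precisely by the bracket-type correction $-\int_s^t \partial_i g_i(r,W_r)\,dr$'' and that ``averaging'' then produces $\tfrac12\int g*dW$ together with the $-\mathrm{div}\,g$ drift. If $\int g_i\,\overleftarrow{dW^i}=\int g_i\,dW^i+c\int\partial_i g_i\,dr$ for some constant $c$, then $\int g*dW=2\int g_i\,dW^i+c\int\mathrm{div}\,g\,dr$ retains a nontrivial martingale part, and $\tfrac12\int g*dW$ cannot equal the pure drift $-\int\mathrm{div}\,g\,dr$ that the Feynman--Kac computation requires. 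The mechanism is not a bracket correction between two integrals of the same orientation but a cancellation: the backward integral is taken against the time-reversed motion (which is again Brownian under $\mathbb{P}^m$ by reversibility), so its martingale part is the \emph{negative} of the forward one, and the \emph{sum} of the two --- not an average of two representations --- collapses to the drift $-2\int_s^t\mathrm{div}\,g(r,W_r)\,dr$, which is precisely the identity stated in the Remark. With that identity in hand (either proved via the Lyons--Zheng-type decomposition or simply quoted from \cite{Stoica}), your smooth-case computation and the subsequent approximation argument go through; your appeal to Theorem \ref{potentielregulier}, relations \eqref{retro}--\eqref{energ}, is not needed and is somewhat beside the point, since those concern regular potentials and increasing processes rather than the divergence term.
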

{ \begin{remark} If $g$ is regular with respect to the space variable, then (see \cite{Stoica})
$${\int_s^t g_r * dW_r=-2\int_s^t \makebox{\rm div} g(r,W_r )\, dr.}$$
Moreover, since $W$ is a reversible Markov process w.r.t. to the invariant measure $m$, by the time change $u=T-r$, 
it appears that, under $\bbP^m$,  the process $(\int_0^t g_i (r, W_r) \overleftarrow{dW_r^i})_{t\in [0,T]}$ has the same "law"  as 
$(\int_{T-t}^T g_i (T-u, W_u) dW^i_u)_{t\in [0,T]}$, 
hence for example $\E^m [\int_s^t g_r * dW_r ]=0$ and the process $(\int_0^t g_r * dW_r )_{t\in [0,T]}$ satisfies the Burkholder-Davis-Gundy inequality.
\end{remark}}

\subsection{The doubly stochastic framework}\label{DoublySto}

Let $ B:= (B_t)_{t\geq 0}$ be a standard
$d^1$-dimensional Brownian motion  on a probability
space  $ \big( \Omega,\mathcal{F}^B, \mathbb{P }\, \big)$.  
Over the time interval $[0,T]$  we define the backward filtration $\left(\mathcal{F}_{t,T}^{B}\right)_{ t\in [0,T]}$ where $\mathcal{F}^B_{t,T}$ is the completion in $\mathcal{F}^B$ of $\sigma (B_{r}-B_{t}; t\leq r\leq T)$.\\
We denote by $\cH_T$ the space of $H^1(\mathbb{R}^d)$-valued
$\mathcal{F}^B_{t,T}$-adapted processes   $(u_t)_{0\leq t \leq T}$ such that the trajectories $ t \rightarrow u_t $ are in $\widetilde{F}$ a.s. and
$$\mathbb{E}\sup_{t\in[0,T]}\| u_t\|^2+\mathbb{E}\int_0^T\|\nabla u_t\|^2dt<+\infty.$$

 We now remind  the quasicontinuity result of the solution of the linear equation, i.e. when  $f, g, h$ do not depend on $u$ and $ \nabla u$.  To this end we first  extend the doubly stochastic It\^o's formula to our framework. We start by recalling  the following result from \cite{DenisStoica} and \cite{MatoussiStoica}:
\begin{theorem}
\label{FK}
Let $f\in L^2 \left(\Omega\times[0,T] \times \mathbb{R}^d;\mathbb{R}\right)$, $g\in L^2 \left(\Omega\times[0,T] \times \mathbb{R}^d;\mathbb{R}^d \right)$ and $h\in L^2 (\Omega\times[0,T]\times\R^d ;\R^{d^1})$ be predictable processes w.r.t. the backward filtration $(\mathcal{F}^B_{t,T})_{t\in [0,T]}$ and $\Psi\in L^2 (\R^d)$.
Let $u\in \mathcal{H}_{T}$ be the unique solution of the equation
\begin{equation*}\left\{\begin{split}
&du_{t}+\frac{1}{2} \Delta u_t dt + \big(f_t  + div g_{t} \big) dt  +
h_t\cdot\overleftarrow{dB}_t = 0,\\
&u_T=\Psi.
\end{split}\right.\end{equation*}
 
Then, for any $0\leq s\leq t\leq T$, one has the following stochastic representation, $\mathbb{P}^{m}$-a.e.,
\begin{equation}
\label{Ito:u}
u\left( t,W_{t}\right) -u\left( s,W_{s}\right)
=\sum\limits_{i}\int_{s}^{t}\partial _{i}u\left( r,W_{r}\right)
dW_{r}^{i}-\int_{s}^{t}f_r\left(W_{r}\right) dr + \frac{1}{2}
\int_{s}^{t}g_r*dW_r -\int_{s}^{t}h_r\left( W_{r}\right) \cdot \overleftarrow{dB}_r.
\end{equation}

\end{theorem}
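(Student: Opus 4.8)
\textbf{Proof plan for Theorem \ref{FK} (the doubly stochastic It\^o--Feynman--Kac formula).}

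The plan is to reduce the doubly stochastic formula \eqref{Ito:u} to the deterministic representation \eqref{divergence:term} by conditioning on the Brownian motion $B$. First I would recall that the solution $u$ of the linear equation, for a \emph{fixed} realization $\omega$ of $B$, can be viewed as the solution of a deterministic parabolic PDE of type \eqref{PDE1} with the source term augmented by the backward stochastic integral against $B$. The cleanest way to exploit this is to introduce a new unknown that absorbs the $\overleftarrow{dB}$ term: set, heuristically, $v_t = u_t + \int_t^T h_r\cdot\overleftarrow{dB}_r$ (understood in the appropriate functional sense), so that $v$ solves a PDE of the form $\partial_t v + \tfrac12\Delta v + \tilde f_t + \mathrm{div}\, g_t = 0$ with $\tilde f$ incorporating the extra stochastic term and the Laplacian acting on the correction. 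Then Theorem on the divergence term (relation \eqref{divergence:term}) applies pathwise in $\omega$, giving the representation for $v_t(W_t)$, and one transfers back to $u$.

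In more detail, the steps I would carry out are: (1) Establish the representation for the linear equation \emph{without} the divergence term and without the $h$-term, i.e. the classical Pardoux--Peng doubly stochastic formula, which is available from \cite{PardouxPeng94}; this already contains the terms $\sum_i\int_s^t \partial_i u\, dW^i_r$, $-\int_s^t f_r(W_r)\,dr$ and $-\int_s^t h_r(W_r)\cdot\overleftarrow{dB}_r$. (2) Add the divergence term by a linearity/superposition argument: decompose $u = u^{(1)} + u^{(2)}$, where $u^{(1)}$ solves the equation with $g=0$ and $u^{(2)}$ solves the equation with $f=0$, $h=0$ but with the divergence term $\mathrm{div}\,g$ and zero terminal condition; for $u^{(1)}$ use step (1), and for $u^{(2)}$ use Theorem \ref{divergence:term} applied conditionally on $B$ (note $u^{(2)}$ is, for each fixed $B$-path, exactly the solution of \eqref{PDE1} since no $\overleftarrow{dB}$ appears in its equation). (3) Combine the two representations, checking that the stochastic integrals $\int_s^t \partial_i u^{(j)}(W_r)\,dW^i_r$ and the term $\tfrac12\int_s^t g_r * dW_r$ add up correctly; here one must be careful that the integral $\int g_r*dW_r$ in \eqref{representation:divg} contains both a forward and a backward integral against $W$, and that these are well defined and additive under $\mathbb{P}^m$ (this is where the reversibility of $W$ under $m$, noted in the Remark after \eqref{divergence:term}, is used). (4) Finally, invoke an approximation argument: the formulas of \cite{DenisStoica} and \cite{Stoica} are first proved for smooth $g$ (where $g_r*dW_r = -2\,\mathrm{div}\,g(r,W_r)\,dr$) and then extended to general $g\in L^2$ by density, using the Burkholder--Davis--Gundy control of $\int_0^t g_r*dW_r$ mentioned in the Remark, together with the energy estimate \eqref{energ}-type bound on $\|u^n - u\|_T$ and an $\mathbb{E}^m$-isometry for the $\overleftarrow{dB}$ integral.

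The main obstacle I anticipate is step (3)–(4): making rigorous the interplay between the two independent sources of randomness ($W$ under $\mathbb{P}^m$, and $B$ under $\mathbb{P}$) while passing to the limit in $g$. Specifically, one needs uniform (in the approximating sequence $g^n$) control of the quasi-continuous versions and of the four distinct stochastic integrals simultaneously, so that one can extract an a.e. convergent subsequence and identify the limit as $u_t(W_t)$ with $u$ the true solution in $\mathcal{H}_T$. The forward/backward integral $\int g_r*dW_r$ is the delicate object, because it is not a martingale in either filtration separately; the reversibility argument in the Remark is precisely what rescues the BDG estimate and hence the limit passage. The $h$-term, by contrast, is a genuine backward It\^o integral against $B$ and is handled by the standard isometry, so it poses no real difficulty. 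I would also note that the quasi-continuity of $u$ — needed merely to make sense of $u_t(W_t)$ as a continuous process — follows from Definition \ref{potential} and the regular-potential machinery of Theorem \ref{potentielregulier}, applied $\omega$-by-$\omega$ in $B$.
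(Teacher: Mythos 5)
The paper itself gives no proof of Theorem \ref{FK}: it is explicitly \emph{recalled} from \cite{DenisStoica} and \cite{MatoussiStoica} (``We start by recalling the following result from...''), so there is no internal argument to compare yours against. Judged on its own terms, your plan is essentially the route taken in those references: split the linear equation by superposition into a Pardoux--Peng-type part ($g=0$) and a pure divergence part, apply the deterministic representation \eqref{divergence:term} of \cite{Stoica} pathwise in the $B$-variable (legitimate because $\mathcal{F}^B_{0,T}$ and $\mathcal{F}_T$ are independent under $\mathbb{P}\otimes\mathbb{P}^m$, as the paper notes right after the theorem), and then pass from smooth to general $g\in L^2$ by density using the BDG-type control of $\int g_r * dW_r$ coming from the reversibility of $W$ under $m$. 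Your identification of the forward-plus-backward integral $\int g_r*dW_r$ as the delicate object, and of the $h$-integral as routine, is accurate.

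One caution: the opening heuristic $v_t = u_t + \int_t^T h_r\cdot\overleftarrow{dB}_r$ does not work as stated, because the resulting equation for $v$ contains $\tfrac12\Delta\bigl(\int_t^T h_r\cdot\overleftarrow{dB}_r\bigr)$, which is meaningless for $h$ merely in $L^2(\Omega\times[0,T]\times\mathbb{R}^d)$; you would need $H^2$ spatial regularity of $h$ and a further approximation layer. Since your detailed steps (1)--(4) abandon this substitution in favour of linear superposition, the plan survives, but you should delete or explicitly disown that reduction rather than present it as the ``cleanest way''. A second, smaller point: the classical Pardoux--Peng formula is stated for classical solutions with smooth coefficients, so even your step (1) already requires the $L^2$-data extension (Bally--Matoussi / Denis--Stoica) rather than \cite{PardouxPeng94} alone; your density argument in step (4) should therefore cover the $f$ and $h$ terms as well, not only $g$.
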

We remark that $ \mathcal{F}_T$ and $\mathcal{ F}^B_{0,T}$ are independent under $\mathbb{P}^m\otimes\mathbb{P}$ which is the product measure defined on $\Omega'\otimes\Omega$ and therefore in the above formula the stochastic integrals with respect to $ dW_t$ and $\overleftarrow{dW}_t$ act independently of $\mathcal{ F}_{0,T}^B$  and similarly the integral with respect to
$\overleftarrow{dB}_t$ acts independently of $ \mathcal{F}_T.$ \\[0.2cm]
In particular, the process $ \left(u_t (W_t)\right)_{t\in[0,T]} $ admits a continuous version which we usually denote by $Y= \left(Y_t\right)_{t \in [0,T]}$ and we introduce the notation $ Z_t := \nabla u_t (W_t)$. As  a consequence of this theorem  we have the following result:

\begin{corollary}
\label{FK2}
Under the hypotheses of the preceding theorem, one has the following stochastic representation for $ u^2$, $\mathbb{P}\otimes\mathbb{P}^m\textbf{-}a.e.$, for any $0\leq t\leq T$,
\begin{equation}
\label{Ito:2}
\begin{split}
u_t^2\left(W_{t}\right) &-\Psi^2\big(W_{T}\big) = 2 \int_t^T \big[ \big(u_s f_s\big) (W_s) - \frac{1}{2}|\nabla u_s |^2 (W_s) -
 \langle \nabla u_s, g_s \rangle (W_s) + \frac{1}{2}|h_s|^2 (W_s) \, \big] \, ds \\&-
\int_{t}^{T} \big( u_s g_s \big)(W_s) *dW_s - 2 \sum\limits_{i}\int_{t}^{T} \big(u_s \partial _{i}u_s \big) \left(W_{s}\right)
dW_{s}^{i}  + 2 \int_{t}^{T} \big(u_s h_s \big) \left(W_{s}\right) \cdot \overleftarrow{dB}_s.
\end{split}
\end{equation}
Moreover, one has the estimate
\begin{equation}
\label{estimationYZ}
\mathbb{E} \sup_{t \leq s \leq T} \|u_s\|^2 + \mathbb{E }
\int_t^T \|\nabla u_s \|^2 \, ds \leq c  \, \left[ \|\Psi \|^2 + \mathbb{ E} \int_t^T \Big[ \|f_s\|^2 + \|g_s\|^2 + \|h_s\|^2 \, \Big]\, ds  \right],
\end{equation}
for any $ t\in [0,T]$.
\end{corollary}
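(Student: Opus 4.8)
The plan is to derive the Itô formula \eqref{Ito:2} for $u^2$ from the representation \eqref{Ito:u} for $u$ given by Theorem \ref{FK}, and then extract the energy estimate \eqref{estimationYZ} by taking expectations and using the Burkholder--Davis--Gundy inequality. Since $u$ only lives in $\widetilde F$ and the coefficients are merely $L^2$, the first step is not a literal application of classical Itô calculus; the clean way around this is to argue by approximation. First I would mollify: replace $f,g,h,\Psi$ by smooth, compactly supported $f^n,g^n,h^n,\Psi^n$ converging to them in the relevant $L^2$ spaces, and let $u^n\in\cH_T$ be the corresponding solutions. For the smooth data the solution $u^n$ is classical (in particular $g^n$ is regular in space, so by the Remark after \eqref{divergence:term} the term $\int_s^t g^n_r * dW_r = -2\int_s^t \mathrm{div}\, g^n(r,W_r)\,dr$ is an ordinary drift), and the process $Y^n_t := u^n_t(W_t)$ is a genuine (doubly) semimartingale under $\bbP^m\otimes\bbP$, decomposed as in \eqref{Ito:u}.

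\textbf{Applying Itô to the square.} To $(Y^n_t)$ I would apply the doubly stochastic Itô formula (the product/square rule in the Pardoux--Peng doubly stochastic framework) to compute $d\big((Y^n_t)^2\big)$. The bracket contributions are: $\sum_i \int \partial_i u^n_r(W_r)^2\,dr = \int |\nabla u^n_r|^2(W_r)\,dr$ from the forward $dW^i$ martingale part, and $\int |h^n_r|^2(W_r)\,dr$ from the backward $\overleftarrow{dB}_r$ martingale part (these two families of integrators are independent and each contributes its own quadratic variation; the cross terms vanish). Collecting the drift terms — the $-2 u^n_r f^n_r$ from the $f$-drift, the $-\langle \nabla u^n_r, g^n_r\rangle$ piece coming from $u^n_r\cdot(g^n_r * dW_r)$ after using the regularity of $g^n$ together with an integration-by-parts identity relating $\mathrm{div}\,g^n$ against $u^n$ to $\langle\nabla u^n, g^n\rangle$ (this is exactly the mechanism already used to define the divergence term in Section 2.3) — one arrives at \eqref{Ito:2} for $u^n$, with all stochastic integrals being true martingales because $u^n, \nabla u^n, g^n, h^n$ are bounded with compact support.

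\textbf{Estimate and passage to the limit.} Taking $\bbE=\E^m\otimes\bbP$ expectations in the $u^n$-version of \eqref{Ito:2} kills the three martingale terms and, integrating the $W$-variable out against the invariant measure $m$ (so that $\E^m[\phi(W_s)] = \int_{\R^d}\phi\,dm$ for $\phi\in L^1$), turns the identity into
\begin{equation*}
\|u^n_t\|^2 + \int_t^T \|\nabla u^n_s\|^2\,ds = \|\Psi^n\|^2 + 2\int_t^T \Big[(u^n_s,f^n_s) - (\nabla u^n_s, g^n_s) + \tfrac12\|h^n_s\|^2\Big]\,ds .
\end{equation*}
Bounding the right-hand side with Cauchy--Schwarz and Young's inequality ($2(u^n_s,f^n_s)\le \|u^n_s\|^2+\|f^n_s\|^2$, $2|(\nabla u^n_s,g^n_s)|\le \tfrac12\|\nabla u^n_s\|^2 + 2\|g^n_s\|^2$), absorbing $\tfrac12\int\|\nabla u^n_s\|^2$ into the left, and applying Gronwall in the time variable yields the deterministic-in-$n$ bound
\begin{equation*}
\sup_{t\le s\le T}\|u^n_s\|^2 + \int_t^T\|\nabla u^n_s\|^2\,ds \le c\Big[\|\Psi^n\|^2 + \int_t^T(\|f^n_s\|^2+\|g^n_s\|^2+\|h^n_s\|^2)\,ds\Big].
\end{equation*}
To recover the $\mathbb{E}\sup$ estimate \eqref{estimationYZ} one comes back before taking expectations: keep the martingale terms, take $\sup_{t\le s\le T}$ and then $\bbE$, and estimate the martingale parts by Burkholder--Davis--Gundy — each, e.g. $\bbE\sup_t|\int_t^T u^n_s\partial_i u^n_s(W_s)\,dW^i_s|$, is controlled by $\bbE(\int_0^T (u^n_s)^2|\partial_i u^n_s|^2(W_s)\,ds)^{1/2}\le \bbE[\sup_s\|u^n_s\|(\int_0^T\|\nabla u^n_s\|^2 ds)^{1/2}]\le \tfrac14\bbE\sup_s\|u^n_s\|^2 + c\,\bbE\int_0^T\|\nabla u^n_s\|^2 ds$ after Young — and likewise for the $g$- and $h$-martingales, so these reabsorb into the left side. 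This gives \eqref{estimationYZ} uniformly in $n$. Finally, linearity of the equation in $(f,g,h,\Psi)$ applied to $u^n-u^m$ together with the same estimate shows $(u^n)$ is Cauchy in $\cH_T$ with limit $u$; passing to the limit in the $u^n$-version of \eqref{Ito:2} (the stochastic integrals converge in $L^2$ by the estimate and the convergence of integrands, along a subsequence $\bbP\otimes\bbP^m$-a.e.) gives \eqref{Ito:2}, and \eqref{estimationYZ} survives the limit by Fatou.

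\textbf{Main obstacle.} The delicate point is the handling of the divergence term $g * dW$ in the square: one is squaring a process one of whose "increments" $\tfrac12\int g_r * dW_r$ is not a semimartingale increment in the usual sense for rough $g$, so the identification of the cross term $u_s\,(g_s * dW_s)$ with $-2\langle\nabla u_s, g_s\rangle(W_s)\,ds$ (modulo a martingale) must be done at the smooth level via the integration-by-parts structure of Section 2.3 and then be shown stable under the $L^2$ limit — this is where the approximation scheme is genuinely needed rather than cosmetic. Everything else (BDG, Young, Gronwall, Fatou) is routine.
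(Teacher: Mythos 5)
Your proposal is correct and follows the standard route that the paper itself relies on (the corollary is stated without an in-text proof, as a consequence of Theorem \ref{FK} imported from \cite{DenisStoica} and \cite{MatoussiStoica}): smooth the data, apply the doubly stochastic It\^o formula of Pardoux--Peng to $u_t(W_t)$, identify the divergence cross-term through $\mathrm{div}(ug)=u\,\mathrm{div}\,g+\langle\nabla u,g\rangle$ together with the representation $\int (u g)*dW=-2\int\mathrm{div}(ug)(W_r)\,dr$, and close the estimate with Burkholder--Davis--Gundy, Young and Gronwall before passing to the $L^2$ limit by linearity. The only loose step is the inequality $\mathbb{E}\bigl(\int_0^T (u_s)^2|\partial_i u_s|^2(W_s)ds\bigr)^{1/2}\leq\mathbb{E}\bigl[\sup_s\|u_s\|(\int_0^T\|\nabla u_s\|^2ds)^{1/2}\bigr]$, where the correct intermediate quantity is $\bigl(\mathbb{E}^m\sup_s|u_s(W_s)|^2\bigr)^{1/2}$ rather than $\sup_s\|u_s\|$; since the absorption argument works with that larger quantity and $\sup_s\|u_s\|^2\leq\mathbb{E}^m\sup_s|u_s(W_s)|^2$, the conclusion \eqref{estimationYZ} is unaffected.
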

\begin{remark}
With the notation introduced above one can rewrite relation \eqref{Ito:2} as
\begin{equation}
\label{Ito:BSDE}
\begin{split}
|Y_t|^2   + \int_t^T |Z_r|^2 dr  =&\, |Y_T|^2 +  2 \int_t^T  Y_r f_r (W_r) dr - 2 \int_t^T \langle Z_r, g_r (W_r) \rangle  \, dr - \int_t^{T} Y_r g_r (W_r) *dW_r \\
& -  2 \sum\limits_{i}\int_{t}^{T} Y_r Z_{i,r}
dW_{r}^{i}  + 2 \int_{t}^{T} Y_r h_r (W_r)  \cdot \overleftarrow{dB}_r + \int_t^T |h _r |^2 (W_r) dr .
\end{split}
\end{equation}
\end{remark}
In the deterministic case, it was proven in \cite{Stoica} that the solution of a quasilinear equation has a quasicontinuous version. The same property holds for the solution of an SPDE  (see Proposition 1 in \cite{MatoussiStoica}):
\begin{proposition}
\label{quasicontinuite:EDPS}
Under the hypotheses of Theorem \ref{FK}, there exists  a function $ \tilde{u} \, : \, [0,T]\times  \Omega \times \mathbb{R}^d \rightarrow \mathbb{R}$
  which is a  quasicontinuous version of $u$, in the sense that  for each $\epsilon >0,$ there exists a predictable
  random set $D_{\omega}^{\epsilon} \subset
[0,T] \times \Omega \times \mathbb{R}^d $
such that $\mathbb{P}$\textbf{-}a.s.  the section $ D_{\omega}^{\epsilon}$ is open and $\tilde{u} \left(\cdot, \omega, \cdot \right) $ is continuous on  its  complement  $\left(D_{\omega}^{\epsilon}\right)^c$ and
$$
\mathbb{P}\otimes \mathbb{P}^m \, \left( (\omega, \omega') \; \big| \; \exists t \in [0,T]\; s.t. \;
\left(t, \omega, W_t (\omega') \right) \in
D_{\omega}^{\epsilon} \, \right) \leq \epsilon .
$$
In particular, the process  $\big(\tilde{u}_t (W_t) \big)_{t \in[0,T]}$ has continuous trajectories,
$\mathbb{P}\otimes \mathbb{P}^m\textbf{-}a.e..$
\end{proposition}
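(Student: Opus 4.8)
The plan is to realise $\tilde u$ as a \emph{quasi-uniform} limit of solutions associated with smooth data, for which joint continuity is classical, and to bound the exceptional sets through the stochastic representation of Theorem~\ref{FK}. First, mollify the data: choose $\Psi^n\in\mathcal{C}_c^\infty(\mathbb{R}^d)$ and processes $f^n,g^n,h^n$ which are smooth and compactly supported in $(t,x)$ and still predictable with respect to $(\mathcal{F}^B_{t,T})$ (mollify in the space variable, and use a time averaging compatible with the backward filtration), with $\Psi^n\to\Psi$, $f^n\to f$, $g^n\to g$, $h^n\to h$ in $L^2(\mathbb{R}^d)$, resp.\ $L^2(\Omega\times[0,T]\times\mathbb{R}^d)$, and fast enough that $\sum_n 4^n\delta_n<\infty$, where $\delta_n:=\mathbb{E}\|\Psi^{n+1}-\Psi^n\|^2+\mathbb{E}\int_0^T(\|f^{n+1}-f^n\|^2+\|g^{n+1}-g^n\|^2+\|h^{n+1}-h^n\|^2)\,ds$ (extract a subsequence if necessary). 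Let $u^n\in\mathcal{H}_T$ solve the linear SPDE with data $(\Psi^n,f^n,g^n,h^n)$. By the regularity theory for linear SPDEs with smooth coefficients (see \cite{Pardoux1,KrylovRozovski81}), each $u^n$ has a version jointly continuous in $(t,x)$, $\mathbb{P}$-a.s.; in particular $(t,x)\mapsto u^n_t(\omega,x)$ is continuous and $(u^n_t(W_t))_{t\in[0,T]}$ has continuous trajectories $\mathbb{P}\otimes\mathbb{P}^m$-a.e.

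The second step is the key estimate. Writing $w^n:=u^{n+1}-u^n$, which solves the linear SPDE with data $(\Psi^{n+1}-\Psi^n,\dots)$, the representation \eqref{Ito:u} gives, for each $t$,
\begin{equation*}
\begin{split}
w^n_t(W_t)={}&(\Psi^{n+1}-\Psi^n)(W_T)-\sum_i\int_t^T\partial_i w^n_r(W_r)\,dW_r^i+\int_t^T(f^{n+1}-f^n)_r(W_r)\,dr\\
&-\tfrac12\int_t^T(g^{n+1}-g^n)_r*dW_r+\int_t^T(h^{n+1}-h^n)_r(W_r)\cdot\overleftarrow{dB}_r.
\end{split}
\end{equation*}
I then bound $\mathbb{E}\,\mathbb{E}^m\sup_{t\in[0,T]}|w^n_t(W_t)|^2$ term by term: the terminal term is $\|\Psi^{n+1}-\Psi^n\|^2$; the $f$-term is controlled by $T\,\mathbb{E}\|f^{n+1}-f^n\|^2_{2,2}$ via Cauchy--Schwarz and the fact that $W_r$ has law $m$ under $\mathbb{P}^m$; the $dW^i$-term and the $g*dW$-term are handled by writing $\int_t^T=\int_0^T-\int_0^t$ and applying the Burkholder--Davis--Gundy inequality (for $g*dW$, its validity is exactly the content of the Remark following the divergence representation), conditionally on $\mathcal{F}^B_{0,T}$, which is legitimate since $\mathcal{F}_T$ and $\mathcal{F}^B_{0,T}$ are independent under $\mathbb{P}^m\otimes\mathbb{P}$; the $\overleftarrow{dB}$-term is controlled by BDG for the backward martingale $s\mapsto\int_s^T(\cdots)\overleftarrow{dB}_r$. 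The $L^2([0,T];H^1)$ a priori bound \eqref{estimationYZ} applied to $w^n$ reabsorbs $\mathbb{E}\int_0^T\|\nabla w^n_s\|^2ds$ into $\delta_n$, so altogether $\mathbb{E}\,\mathbb{E}^m\sup_t|w^n_t(W_t)|^2\le C\delta_n$.

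For the last step, fix $\epsilon>0$ and pick $K$ with $\sum_{n\ge K}4^nC\delta_n\le\epsilon$. Set $O_n:=\{(t,\omega,x):|u^{n+1}_t(\omega,x)-u^n_t(\omega,x)|>2^{-n}\}$, which is open in its $\omega$-section and predictable (since each $u^n$ is continuous and $(\mathcal{F}^B_{t,T})$-adapted), and let $D^\epsilon_\omega:=\bigcup_{n\ge K}O_n$. By Chebyshev and the key estimate, $\mathbb{P}\otimes\mathbb{P}^m(\exists t:(t,\omega,W_t)\in O_n)\le 4^nC\delta_n$, so by subadditivity $\mathbb{P}\otimes\mathbb{P}^m(\exists t:(t,\omega,W_t)\in D^\epsilon_\omega)\le\epsilon$. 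On the complement of $D^\epsilon_\omega$ one has $\sum_{n\ge K}\sup|u^{n+1}-u^n|\le\sum 2^{-n}<\infty$, so $(u^n)$ converges uniformly there; defining $\tilde u:=\limsup_n u^n$ (a Borel, predictable function, equal a.e.\ to $u$ because $u^n\to u$ in $\mathcal{H}_T$ by \eqref{estimationYZ}), $\tilde u$ is continuous on $(D^\epsilon_\omega)^c$, which is the asserted quasicontinuity. Finally, on the event $\{\omega':\forall t,(t,\omega,W_t(\omega'))\notin D^\epsilon_\omega\}$, of $\mathbb{P}\otimes\mathbb{P}^m$-measure $\ge1-\epsilon$, the process $t\mapsto\tilde u_t(W_t)$ is a uniform limit of the continuous processes $t\mapsto u^n_t(W_t)$, hence continuous; letting $\epsilon\to0$ yields continuity $\mathbb{P}\otimes\mathbb{P}^m$-a.e.

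The main obstacle is the divergence (Dirichlet-type) term $\tfrac12\int g_r*dW_r$: it is neither a martingale nor of bounded variation, so the supremum-in-time control rests entirely on the BDG property recorded in the Remark after the divergence representation theorem, together with \eqref{estimationYZ} to tame the $\nabla w^n$ stochastic integral. A secondary technical point is carrying out all the BDG bounds conditionally on $\mathcal{F}^B_{0,T}$ and, upstream, securing joint continuity of the smooth-data approximants $u^n$ so that the superlevel sets $O_n$ are genuinely open and predictable.
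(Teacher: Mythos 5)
The paper does not actually prove this proposition: it is quoted verbatim from \cite{MatoussiStoica} (Proposition 1 there), so the only ``paper proof'' is a citation. Your reconstruction follows the same strategy as that reference in its essential mechanism: approximate by solutions already known to be (quasi)continuous, upgrade the energy estimate \eqref{estimationYZ} to a bound on $\mathbb{E}\,\mathbb{E}^m\sup_{t}|w^n_t(W_t)|^2$ via the representation \eqref{Ito:u}, BDG for the three stochastic integrals (including the $g*dW$ term, whose BDG property is indeed the content of the remark after the divergence representation), and then Chebyshev plus a summable-series choice of approximants to build the open exceptional sets. That part of your argument is correct and is exactly the right estimate; the identification of $\{\exists t:(t,\omega,W_t)\in O_n\}$ with $\{\sup_t|w^n_t(W_t)|>2^{-n}\}$ and the subadditivity step are fine even though $\mathbb{P}^m$ has infinite total mass.

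The one step you pass over too quickly is the first one: the claim that after mollifying the data \emph{in space only}, ``regularity theory for linear SPDEs with smooth coefficients'' gives a version of $u^n$ jointly continuous in $(t,x)$. After spatial mollification the coefficients are smooth in $x$ but still merely square-integrable in $(t,\omega)$, and the classical results you cite require more integrability; in particular, a direct Kolmogorov-criterion argument on the stochastic convolution $\int_t^T P_{s-t}h^n_s(x)\cdot\overleftarrow{dB}_s$ needs $p$-th moments with $p>d+1$, which you do not have from $\mathbb{E}\|h\|_{2,2}^2<\infty$ alone. This is repairable by standard means --- either truncate so that $\int_0^T\|h^n_s\|^2ds$ is bounded a.s.\ and pass to the limit on the increasing truncation events, or (as is done in \cite{MatoussiStoica}) approximate $h$ by backward-predictable step processes in time, so that conditionally on $\mathcal{F}^B_{0,T}$ the approximant solves a deterministic equation piece by piece and the deterministic quasicontinuity result of \cite{Stoica} applies. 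Without one of these devices the openness of your sets $O_n$ is not justified; with it, your proof is complete and equivalent to the reference's.
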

The measures intervening in our equations to force the solution of the SPDE to stay between obstacles are random, so we need to introduce the notion of 
a random
regular measure:
\begin{definition}
\label{random:regular:measure}
We say that $ u \in \mathcal{H}_T$ is a random regular potential provided that $ u(\cdot,\omega,\cdot)$ has a version which is regular potential, $\mathbb{P}(d\omega)\textbf{-} a.s.$. The random variable $ \nu\, :\, \Omega \longrightarrow \mathcal{M} \left([0,T] \times \mathbb{R}^d \right) $ with values in the set of regular measures on $ [0,T] \times \mathbb{R}^d$ is called a regular random measure, provided that there exits a random regular potential $u$ such that the  measure $ \nu(\omega) (dt,dx)$ is associated to the regular potential $ u (\cdot, \omega, \cdot)$, $\mathbb{P}(d\omega)\textbf{-} a.s.$.
\end{definition}
The relation between a random measure and its associated random regular potential is described by the following proposition (see \cite{MatoussiStoica}, Proposition 2):
\begin{proposition}
\label{quasicontinuite:bis}
Let $u$ be  a random regular potential and $\nu $ be the associated random regular measure. Let  $\tilde{u}$ be the excessive version of $u$, i.e. 
$\tilde{u}\left( \cdot ,\omega ,\cdot \right) $ is a.s. a $\big(\widetilde{P}_{t}\big)_{t>0}$-excessive function which coincides with 
$u\left( \cdot ,\omega ,\cdot \right)$, $dtdx$-a.e.. Then we have the following properties:\\[0.2cm]
(i) For each $\varepsilon >0,$ there exists a $\left( \mathcal{F}_{t,T}^{B}\right) _{t\in \left[ 0,T\right] }$-predictable random set 
$D_{\omega}^{\varepsilon }\subset \left[ 0,T\right] \times \Omega \times \mathbb{R}^{d}$ 
such that $\bbP$-a.s. the section $D_{\omega }^{\varepsilon }$ is open and 
$\tilde{u}\left( \cdot ,\omega ,\cdot \right) $ is continuous on its
complement $\left( D_{\omega }^{\varepsilon }\right) ^{c}$ and
\begin{equation*}
\mathbb{P}\otimes \mathbb{P}^{m}\left( \left( \omega ,\omega ^{\prime }\right) |\,\exists
t\in \left[ 0,T\right] \ \ s.t.\left( t,\omega ,W_{t}\left( \omega' \right)
\right) \in D_{\omega }^{\varepsilon }\right) \leq \varepsilon .
\end{equation*}
In particular  the process  $\big(\tilde{u}_t (W_t) \big)_{t \in[0,T]}$ has continuous trajectories,
$\mathbb{P}\otimes \mathbb{P}^m\textbf{-}a.e..$\\[0.2cm]
(ii) There exists a continuous increasing process $A:=\left( A_{t}\right)
_{t\in \left[ 0,T\right] }$ defined on $\Omega \times \Omega ^{\prime }$
such that $A_{s}-A_{t}$ is measurable with respect to the $\mathbb{P}\otimes \mathbb{P}^{m}$-completion of 
$\mathcal{F}_{t,T}^{B}\vee \sigma \left( W_{r}, r\in \left[t,s\right] \right) $, for any $ 0 \leq t \leq s \leq T$, 
and such that the following relations are fulfilled almost surely, with any $\mathcal{\varphi }\in \mathcal{D}_T$ and $t\in \left[ 0,T\right],$
\begin{equation*}
\begin{split}
&(a) \quad \left( u_{t},\mathcal{\varphi }_{t}\right) +\int_{t}^{T}\left( \frac{1}{2}
\left( \nabla u_{s},\nabla \mathcal{\varphi }_{s}\right) +\left( u_{s},\partial _{s}
\mathcal{\varphi }_{s}\right) \right) ds=\int_{t}^{T}\int_{\mathbb{R}^{d}}
\mathcal{\varphi }\left( s,x\right) \nu \left( ds,dx\right) ,\\
& (b)  \quad u_{t}(W_{t})=\mathbb{E}^m\left[ A_{T}\,\big|\mathcal{F}_{t}\right] -A_{t}\,,\\
& (c) \quad u_{t}\left( W_{t}\right) =A_{T}-A_{t}-\sum_{i=1}^{d}\int_{t}^{T}\partial
_{i}u_{s}\left( W_{s}\right) dW_{s}^{i},\\
& (d) \quad \left\| u_{t}\right\| ^{2}+ \int_{t}^{T} \big\| \nabla u_{s}\big\| ^2
\, ds=\mathbb{E}^{m}\left( A_{T}-A_{t}\right) ^{2},\\
& (e) \quad \mathcal{\nu }\left( \mathcal{\varphi }\right) =\mathbb{E}^{m}\int_{0}^{T}\mathcal{
\varphi }\left( t,W_{t}\right) dA_{t}\,.
\end{split}
\end{equation*}

\end{proposition}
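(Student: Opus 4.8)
The plan is to reduce everything to the deterministic statement of Theorem~\ref{potentielregulier} by freezing $\omega$, the only genuine work being to upgrade the pathwise objects to jointly measurable ones adapted to the backward filtration; this is the content of \cite{MatoussiStoica}, Proposition~2, which I would reproduce as follows. Fix $\omega$ outside a $\mathbb{P}$-null set. By Definition~\ref{random:regular:measure}, $u(\cdot,\omega,\cdot)$ has a version which is a regular potential, with excessive version $\tilde u(\cdot,\omega,\cdot)$, and $\nu(\omega)$ is the regular measure associated with it. Theorem~\ref{potentielregulier} then yields, for this fixed $\omega$, a continuous increasing $(\mathcal{F}_t)$-adapted process $A^{\omega}=(A^{\omega}_t)_{t\in[0,T]}$ with $A^{\omega}_0=0$, $\mathbb{E}^m[(A^{\omega}_T)^2]<\infty$, for which the identities $(i)$--$(v)$ of that theorem hold $\mathbb{P}^m$-a.e.; these are exactly the pathwise forms of $(b)$, $(c)$, $(d)$, $(a)=(iv)$ and $(e)$ in the statement. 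What remains is to glue the family $(A^{\omega})_\omega$ into a single process $A$ on $\Omega\times\Omega'$ that is jointly measurable and whose increments have the asserted measurability.

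For this I would use the explicit representation obtained by rearranging relation $(ii)$ of Theorem~\ref{potentielregulier}: evaluating it at $t=0$ gives $A^{\omega}_T=\tilde u_0(\omega,W_0)+\sum_i\int_0^T\partial_i u_s(\omega,W_s)\,dW^i_s$, and hence, for general $t$,
\begin{equation*}
A^{\omega}_t=\tilde u_0(\omega,W_0)-\tilde u_t(\omega,W_t)+\sum_{i=1}^d\int_0^t\partial_i u_s(\omega,W_s)\,dW^i_s,\qquad\mathbb{P}^m\text{-a.e.}
\end{equation*}
I would then \emph{define} $A_t(\omega,\omega')$ by the right-hand side, using a jointly measurable version of the martingale part — available since $u\in\mathcal{H}_T$ is $(\mathcal{F}^B_{t,T})$-adapted with a quasicontinuous version, so that $s\mapsto\nabla u_s(\omega,W_s)$ is a legitimate integrand depending measurably on $\omega$. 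For each fixed $\omega$ the uniqueness clause of Theorem~\ref{potentielregulier} forces this process to coincide $\mathbb{P}^m$-a.e. with $A^{\omega}$, so it is continuous and increasing in $t$ and inherits $(a)$--$(e)$. Finally, for $0\le t\le s\le T$ one has $A_s-A_t=\tilde u_t(W_t)-\tilde u_s(W_s)+\sum_i\int_t^s\partial_i u_r(W_r)\,dW^i_r$, which involves only $u_r,\nabla u_r$ for $r\in[t,s]\subset[t,T]$ — hence $\mathcal{F}^B_{t,T}$-measurable quantities, since $\mathcal{F}^B_{r,T}\subset\mathcal{F}^B_{t,T}$ — together with the increments $W_r$, $r\in[t,s]$; therefore $A_s-A_t$ is measurable with respect to the $\mathbb{P}\otimes\mathbb{P}^m$-completion of $\mathcal{F}^B_{t,T}\vee\sigma(W_r;\,r\in[t,s])$, as required.

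It then remains to exhibit the predictable random open exceptional sets $D^{\varepsilon}_\omega$ of part $(i)$ and the continuity of $(\tilde u_t(W_t))_{t\in[0,T]}$: this is the same $\omega$-measurable reprise of the quasicontinuity of a single regular potential that underlies Proposition~\ref{quasicontinuite:EDPS}, and I would carry it out exactly as in \cite{MatoussiStoica}, Proposition~2. The one delicate point in the whole argument is precisely this measurable gluing: uniqueness in Theorem~\ref{potentielregulier} holds only $\mathbb{P}^m$-a.e. for each frozen $\omega$, so without the explicit formula above one would be forced into a measurable selection theorem; rewriting relation $(ii)$ is what makes both the joint measurability and the backward adaptedness transparent.
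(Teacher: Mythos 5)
This proposition is not proved in the paper at all: it is quoted verbatim from \cite{MatoussiStoica}, Proposition~2, so there is no in-paper argument to compare against. Your reconstruction is sound and follows the strategy that the cited result rests on, namely freezing $\omega$, invoking the deterministic Theorem~\ref{potentielregulier} pathwise, and then upgrading the family $(A^{\omega})_{\omega}$ to a single jointly measurable process. Your specific device for the gluing --- solving relation $(ii)$ of Theorem~\ref{potentielregulier} for $A_t$ to get
\begin{equation*}
A_t=\tilde u_0(W_0)-\tilde u_t(W_t)+\sum_{i=1}^d\int_0^t\partial_i u_s(W_s)\,dW^i_s
\end{equation*}
and taking this as the definition --- is a clean way to make both the joint measurability and the measurability of $A_s-A_t$ with respect to $\mathcal{F}^B_{t,T}\vee\sigma(W_r;\,r\in[t,s])$ transparent; note that you do not even need the uniqueness clause here, since relation $(ii)$ itself identifies $A^{\omega}$ with this expression $\mathbb{P}^m$-a.e. (The original construction in \cite{MatoussiStoica} instead obtains $A$ as a limit of explicitly jointly measurable approximations built from the semigroup, but both routes hinge on the same deterministic theorem.) The only places where your write-up leans on the reference rather than supplying an argument are (1) the existence of a jointly measurable, $\mathcal{G}$-adapted version of the stochastic integral with the $\omega$-dependent integrand $\nabla u_s(\omega,W_s)$ --- standard in the doubly stochastic framework since $B$ and $W$ are independent, and used throughout the paper, e.g.\ in Theorem~\ref{FK} --- and (2) the construction of the predictable open exceptional sets $D^{\varepsilon}_{\omega}$ in part $(i)$, which you defer to the same reference; since the paper itself offers nothing more than the citation, this is an acceptable level of detail.
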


We remark that, taking the expectation in relation  (ii-d)  above
proposition, one gets $$\mathbb{E}\mathbb{E}^{m}\left[ A_{T}^{2}\right] =\mathbb{E}\left[ \left\|
u_{0}\right\| ^{2}+\int_{0}^{T}\big\|\nabla u_{t}\big\|^2 \,  dt\right] .$$
In a natural way, we define the notion of {\it random extended regular measure} as following:
\begin{definition} A random measure $\nu$ defined on $(\Omega , \mathcal{F}^B)$ and taking values in the set of Radon measures on $[0,T]\times \R^d$  is a {\rm random extended  regular measure} if there exists an increasing process $A$ such that 
$$\forall \varphi\in \mathcal{C}_b ([0,T]\times \R^d ),\, \nu (\varphi )=\E^m\left [\int_0^T \varphi (t,W_t )dA_t \right],$$
where $A$ satisfies the following hypotheses:
\begin{enumerate}
\item $A_0 =0$ and  $\,\E\E^m [A_T]<+\infty$.
\item There exists a sequence of processes $(A^n)$ associated with random regular measures as in Proposition \ref{quasicontinuite:bis} such that
$$\lim_{n\rightarrow +\infty} \sup_{t\in [0,T]}|A_t -A^n_t|=0,\ \mathbb{P}\otimes\mathbb{P}^m-a.e..$$
\end{enumerate}
\end{definition}

\section{Hypotheses and main result}
We consider the following quasilinear parabolic SPDE with two obstacles that, for the moment, we formally write as
\begin{equation}
\label{DOSPDE} \left\{\begin{split} &d u (t,x) +\frac{1}{2} \Delta u (t,x)dt  + f(t,x,u_t (x), \nabla u_t (x))dt+\mbox{div} g(t,x,u_t (x), \nabla u_t (x))dt 
\\&\qquad\qquad\qquad\qquad\qquad+h(t,x,u_t (x), \nabla u_t (x))\cdot\overleftarrow{dB}_t +\nu^+ (dt,x)-\nu^- (dt,x )=0,\\
 &\underline{v}(t,x)\leq u(t,x)\leq \overline{v}(t,x),\\
 &\int_0^T\int_{\bbR^d} \left( \tilde{u}(t,x)-\underline{v}(t,x)\right)\nu^+ (dt,dx)=\int_0^T\int_{\bbR^d} \left( \overline{v}(t,x)-\tilde{u}(t,x)\right)\nu^- (dt,dx)=0,\\
 &u_T=\Psi. \end{split}\right. 
\end{equation}
\begin{remark} As explained in \cite{MatoussiStoica} (Remark 1, p. 1157), we can consider the more general case where the operator 
$\frac12\Delta$ is replaced by a strictly elliptic operator in divergence form $L:=\sum_{ij}\partial _{i}a^{ij}\partial _{j}$, where $a:=\left( a^{ij}\right) :\mathbb{R}
^{d}\rightarrow \mathbb{R}^{d}\times \mathbb{R}^{d}$ is symmetric and measurable. 
\end{remark}

\subsection{Hypotheses}\label{hypotheses}In the remainder of this paper we assume that the final condition $\Psi$ is a given function in $L^2 (\mathbb{R}^d)$ and the functions appearing in \eqref{DOSPDE}
\begin{eqnarray*}
f &:& [0,T]\times \Omega \times  \mathbb{R}^d \times \bbR \times \bbR^{d}
\rightarrow \bbR \;, \\
g&:=& (g_1,...,g_d) \, : \, [0,T]\times \Omega \times \mathbb{R}^d \times
\bbR \times \bbR^{d} \rightarrow \bbR^{d}\;, \\
h&:=& (h_1,...,h_{d^1}) \, : \,  [0,T]\times \Omega \times  \mathbb{R}^d \times \bbR \times \bbR^{d}
\rightarrow \bbR^{d^1} \; 
\end{eqnarray*}
are random functions predictable with respect to the backward filtration  $\left(\mathcal{F}_{t,T}^{B}\right)_{ t\in [0,T]}$. We set
\begin{equation*}
\begin{split}
 &f ( \cdot
,\cdot,\cdot, 0,0)=:f^0,
\quad  g( \cdot,\cdot,\cdot ,0,0) =:g^0 = (g_1^0,...,g_d^0), \quad   h ( \cdot,\cdot, \cdot ,0,0)=:h^0=(h_1^0,...,h_{d^1}^0)\\
\end{split}
\end{equation*}
and assume the following hypotheses: \\[0.2cm]
 \textbf{Assumption (H)}: There exist  non-negative
 constants $C,\, \alpha, \,\beta $ such that
\begin{enumerate}
\item[\textbf{(i)}]
$ |f(t,\omega,x,y,z) -f( t,\omega, x,y',z') | \leq \, C\big(|y-y'| + |z-z'| \big);$
\item[\textbf{(ii)}] 
$ \Big(\sum_{i=1}^{d}| g_{i}(t,\omega,x,y,z) -g_i( t,\omega,x,y',z')|^2\Big)^{\frac{1}{2}} \leq  C \, | y-y'|+ \, \alpha \, |z-z'|;$
\item[\textbf{(iii)}]
$\Big(\sum_{j=1}^{d^1}| h_{j}(t,\omega,x,y,z) -h_j( t,\omega,x,y',z')|^2\Big)^{\frac{1}{2}}\leq C \, | y-y'|+\, \beta \, |z-z'|;$
\item[\textbf{(iv)}]  the contraction property: $ \alpha  +\dis \frac{\beta ^{2}}{2}  < \frac{1}{2}\, $.
\end{enumerate}
\begin{remark} 
In the case when the operator  $\frac12\Delta$ is replaced by  a strictly elliptic operator in divergence form $L:=\sum_{ij}\partial _{i}a^{ij}\partial _{j}$ with  $a:=\left( a^{ij}\right) :\mathbb{R}
^{d}\rightarrow \mathbb{R}^{d}\times \mathbb{R}^{d}$ symmetric and measurable and such that%
$$
\lambda \left\vert \xi \right\vert ^{2}\leq \sum_{ij}a^{ij}\left( x\right)
\xi ^{i}\xi ^{j}\leq \Lambda \left\vert \xi \right\vert ^{2}.
$$
The contraction property becomes :
$ \alpha  +\dis \frac{\beta ^{2}}{2}  < \lambda\, $ (see \cite{MatoussiStoica}, Remark 1,  p. 1157).

\end{remark}
\textbf{Assumption (HD2)}:
$$ \mathbb{E} \left(  \left\| f^0\right\|_{2,2}^2+\left\|
g^0\right\| _{2,2}^2+\left\| h^0\right\| _{2,2}^2\right) <+\infty\,.
 $$
 \\[0.2cm]
{{\textbf{Assumption (HO)} : }The two obstacles $\underline{v} (t,\omega, x)$ and $\overline{v} (t,\omega, x)$ are predictable random functions  with respect to the backward filtration  $\left(\mathcal{F}^B_{t,T}\right)_{t\in[0,T]}$. We also assume  that
\begin{enumerate} 
\item[\textbf{(i)}] $t\rightarrow\underline{v} (t,\omega, x)$ and $t\rightarrow\overline{v} (t,\omega, x)$ are $\mathbb{P} \otimes m-$a.e. continuous on $[0,T]$.
\item[\textbf{(ii)}]  $\underline{v} (T, \cdot) \leq \Psi (\cdot)\leq \overline{v}(T,\cdot)$.
\item[\textbf{(iii)}] There exist  $\tilde f\in L^2 \left(\Omega\times[0,T] \times \mathbb{R}^d;\mathbb{R}\right)$, $\tilde g\in   L^2 \left(\Omega\times[0,T] \times \mathbb{R}^d;\mathbb{R}^d \right)$, $\tilde h \in L^2 (\Omega\times[0,T]\times\R^d ;\R^{d^1})$,  predictable processes w.r.t. the backward filtration $\left(\mathcal{F}^B_{t,T}\right)_{t\in [0,T]}$ and $\tilde \Psi\in L^2 (\R^d)$ such that if we denote by $z$ the solution of the following linear SPDE
\begin{equation}\label{obstacle}
\left\{\begin{split}
&dz_t+\frac{1}{2}\Delta z_tdt+\tilde{f}_tdt+{\rm div} \tilde g_tdt+ \tilde h_t\cdot\overleftarrow{dB}_t=0,\\& z_T=\tilde \Psi,
\end{split}\right.
\end{equation} 
then $\underline{v}_t\leq z_t\leq\overline{v}_t$ a.e. $\forall t\in [0,T]$. 
\item[\textbf{(iv)}] {\rm Strict separability of the obstacles:} there exists  a positive constant $\kappa$   such that 
$\underline{v}_t- z_t\leq-\kappa<0\leq\overline{v}_t-z_t$. 
\end{enumerate}
\begin{remark}
The condition $\textbf{(iv)}$ is similar to the so-called Mokobodski condition used in stochastic Dynkin games.
\end{remark}
%
\begin{remark}\label{IntergrabilitiObstacles} By Theorem 8 in \cite{DenisStoica} we have the existence and uniqueness of $z$. Moreover, we know that $\E\E^m [\sup_{t\in[0,T]}|z_t (W_t )|^2]<+\infty$. Hence, hypothesis ${\bf(iii)}$ of {\textbf{(HO)}} ensures that $$\mathbb{E}\mathbb{E}^m\left[\sup_{t\in[0,T]}(\underline{v}^+(t,\cdot,W_t))^2\right]<+\infty\makebox{ and }\,\mathbb{E}\mathbb{E}^m\left[\sup_{t\in[0,T]}(\overline{v}^-(t,\cdot,W_t))^2\right]<+\infty.$$
\end{remark}}
\subsection{The weak solution for the two-obstacle problem}
We now precise the definition  of the solution of our obstacle problem. We recall that the {\red datum} satisfy the hypotheses of Section \ref{hypotheses}.
\begin{definition}
\label{o-spde} We say that a triplet $(u,\nu^+ ,\nu^- )$ is a weak solution of the two-obstacle problem for SPDE
\eqref{SPDE1} associated to $(\Psi,f,g,h,\underline{v},\overline{v})$, if\\[0.2cm]
(i) $ u \in \mathcal{H}_T $,  $\underline{v}(t,x)\leq u (t,x)\leq \overline{v}(t,x)$ 
$d\mathbb{P}\otimes dt\otimes dx-a.e.$ and $u(T,x)=\Psi(x)$,  $d\mathbb{P}\otimes dx-a.e.;$\\[0.2cm]
(ii)  $\nu^+ $ and $\nu^-$ are random extended regular
measures on {\red $[0,T] \times \mathbb{R}^d$};\\[0.2cm]
(iii)  for any $\varphi \in \mathcal{D}_T$ and $t \in [0,T]$, the following relation holds almost surely, 
\begin{equation}
\label{weak:RSPDE}
\begin{split}
& \int_{t}^{T } \left[\left( u_{s},\partial_{s}\varphi_s \right) +
\frac{1}{2} \left( \nabla u_{s},\nabla \varphi_s \right) \,  \right] \,  ds - \big(\Psi,
\varphi_T \big) +
\big( u_t, \varphi_t \big) \\ =& \int_{t}^{T} \left[ \, \left(f_s \big(u_{s},\nabla u_s \big), \varphi_s \right)  -
  \left( g_s\big(u_{s},\nabla u_s \big), \nabla \varphi_s\right)  \, \right] \, ds \\
 & + \int_{t}^{T} \left( h_s\left( u_{s},\nabla u_s\right) ,\varphi_s  \right) \cdot \overleftarrow{dB}_{s}
 + \int_{t}^{T}\int_{\mathbb{R}^d}\varphi_s(x)\, (\nu^+ -\nu^-) (ds,dx);\\
\end{split}
\end{equation}
(iv) $u$ admits a quasi-continuous version, $\tilde{u}$, and we have%
\begin{equation*}
\int_{0}^{T}\int_{\mathbb{R}^{d}}\left( \tilde{u}_{s}\left( x\right)
-\underline{v}_{s}\left( x\right) \right) \nu^+ \left(ds, dx\right) =\int_{0}^{T}\int_{\mathbb{R}^{d}}\left( 
\overline{v}_{s} \left( x\right)
-\tilde{u}_s \left( x\right) \right) \nu^- \left(ds,dx\right) =0, \; a.s..
\end{equation*}
\end{definition}



\subsection{The main theorem}
Here is the main result of our paper : 
\begin{theorem}
\label{maintheorem}
Suppose that Assumptions \textbf{(H)},  \textbf{(HD2)} and \textbf{(HO)} hold.
Then  there exists a unique weak solution $(u,\nu^+ ,\nu^-)$ of the two-obstacle problem for SPDE  \eqref{SPDE1} associated to $(\Psi,f,g,h,\underline{v},\overline{v})$.
\\Moreover, the quadruple of processes $\left(Y_t, Z_t, K_t^+,K_t^- \right)_{t \in [0,T]}$, the unique solution of the  following doubly reflected backward doubly stochastic differential equation (in short DRBDSDE) :
\begin{equation}\label{DRBDSDE}
\begin{split}
Y_{t}& = \Psi\left( W_T\right) +\int_{t}^{T}f_s
\left(W_{s},Y_s,Z_s\right) ds -\frac{1}{2}\int_{t}^{T}g_s\left(W_{s},Y_s,Z_s\right) *dW_s  + \int_{t}^{T}h_s
\left(W_{s},Y_s,Z_s\right)\cdot  \overleftarrow{dB}_s \\&\qquad -\sum\limits_{i}\int_{t}^{T}Z_{i,s}dW_{s}^{i}+ K_T^+ - K_t ^+-K_T^-+K_t^- \,
\end{split}
\end{equation}
with $L_t\leq Y_t\leq U_t,\; \forall \, t \in [0,T] $,  $\left(K_t^+\right)_{t \in[0,T]}$ and $\left(K_t^-\right)_{t \in[0,T]}$ being increasing continuous processes and    \begin{equation}
\label{reflection:minimum}
\displaystyle \int_{0}^{T}(Y_t-L_t)dK^+_t = \int_0^T(U_t-Y_t) dK_t^-= 0
\end{equation}
is given by: $Y_t = u(t,W_t)$, $ Z_t =\nabla u(t,W_t)$, $L_t=\underline{v}(t,W_t)$, $U_t=\overline{v}(t,W_t)$ and for any $\mathcal{\varphi }\in \mathcal{D}_T$, $$\mathcal{\nu^+ }\left( \mathcal{\varphi }\right) =\mathbb{E}^{m}\int_{0}^{T}\mathcal{%
\varphi }\left( t,W_{t}\right) dK^+_{t}\ \makebox{ and }\ \mathcal{\nu^- }\left( \mathcal{\varphi }\right) =\mathbb{E}^{m}\int_{0}^{T}\mathcal{%
\varphi }\left( t,W_{t}\right) dK^-_{t}.$$
\end{theorem}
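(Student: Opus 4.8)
The plan is to transfer the whole problem to the probabilistic side via the Feynman--Kac correspondence of Theorem \ref{FK} and the nonlinear Feynman--Kac machinery already developed for the one-obstacle case in \cite{MatoussiStoica}, and to build the solution by a double penalization scheme. The backbone of the argument is the following dictionary: a weak solution $(u,\nu^+,\nu^-)$ of \eqref{DOSPDE} with $u$ a random regular-potential-type process corresponds, under $\mathbb{P}\otimes\mathbb{P}^m$, to a solution $(Y,Z,K^+,K^-)$ of the doubly reflected BDSDE \eqref{DRBDSDE}--\eqref{reflection:minimum} with $Y_t=\tilde u(t,W_t)$, $Z_t=\nabla u(t,W_t)$, $L_t=\underline v(t,W_t)$, $U_t=\overline v(t,W_t)$, and $K^\pm$ the continuous increasing processes associated to $\nu^\pm$ by relation (v) of Theorem \ref{potentielregulier} (extended to $\mathcal{A}_1$). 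So the scheme is: (1) solve the DRBDSDE \eqref{DRBDSDE} and establish its estimates; (2) read off the associated SPDE solution and check it is a weak solution in the sense of Definition \ref{o-spde}; (3) prove uniqueness on the SPDE side (or equivalently on the BDSDE side). I would first treat the linear case (coefficients not depending on $(u,\nabla u)$) and then pass to the quasilinear case by a Picard/contraction iteration, using Assumption \textbf{(H)}(iv) $\alpha+\beta^2/2<1/2$ to close the fixed point in $\mathcal{H}_T$, exactly as in \cite{MatoussiStoica}.

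\textbf{The penalization scheme and the key convergence step.} For the existence in the linear case, I would introduce the doubly penalized SPDEs
\begin{equation*}
du^{n,m}_t+\tfrac12\Delta u^{n,m}_t\,dt+f_t\,dt+\mathrm{div}\,g_t\,dt+h_t\cdot\overleftarrow{dB}_t+n(u^{n,m}_t-\underline v_t)^-\,dt-m(u^{n,m}_t-\overline v_t)^+\,dt=0,
\end{equation*}
with terminal value $\Psi$, whose well-posedness in $\mathcal{H}_T$ follows from Theorem \ref{FK} (the penalization terms are Lipschitz). Using the Itô formula of Corollary \ref{FK2} (in its BDSDE form \eqref{Ito:BSDE}) together with the Mokobodski-type hypothesis \textbf{(HO)}(iii)--(iv) — comparing $u^{n,m}$ with the reference solution $z$ of \eqref{obstacle} and exploiting the strict gap $\kappa$ — I would derive uniform bounds on $\mathbb{E}\sup_t\|u^{n,m}_t\|^2+\mathbb{E}\int_0^T\|\nabla u^{n,m}_t\|^2dt$ and, crucially, on the total masses of the penalization measures $\nu^{+,n,m}:=n(u^{n,m}-\underline v)^-dtdx$ and $\nu^{-,n,m}:=m(u^{n,m}-\overline v)^+dtdx$, i.e. on $\mathbb{E}\mathbb{E}^m[K^{+,n,m}_T+K^{-,n,m}_T]$ where $K^{\pm,n,m}$ are the associated increasing processes. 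Then I would first send $m\to\infty$ at fixed $n$ (this is the one-obstacle problem of \cite{MatoussiStoica}, reflected below and penalized above), getting $(u^n,\nu^{+,n},K^{-,n})$, and then send $n\to\infty$. The passage to the limit in the increasing processes $K^{\pm,n}$ is where the probabilistic approach pays off: by the monotone structure of the scheme $(u^n)$ is monotone (increasing in $n$ for the lower penalization), the process $Y^n_t=\tilde u^n(t,W_t)$ converges, and I would invoke the Peng--Xu stochastic monotone convergence theorem (Lemma \ref{contK+}, cf. \cite{PengXu}) to conclude that $K^{+,n}$ converges uniformly on $[0,T]$ to a continuous increasing process $K^+$, and similarly for $K^-$. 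By the very Definitions of $\mathcal{A}_1$ and of a random extended regular measure, the limits $K^\pm$ lie in (the random analogue of) $\mathcal{A}_1$ and hence define random extended regular measures $\nu^\pm$ that do not charge polar sets.

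\textbf{Skorokhod conditions, Itô's formula and comparison.} Having the limit quadruple $(Y,Z,K^+,K^-)$, I would check the minimal Skorokhod conditions \eqref{reflection:minimum}: $\int_0^T(Y_t-L_t)dK^+_t=0$ follows from the standard argument that $n(u^n-\underline v)^-\to 0$ in the appropriate sense combined with the lower bound $L\le Y$ obtained in the limit, and symmetrically for $K^-$; translating back through relation (v) gives the measure-theoretic Skorokhod conditions of Definition \ref{o-spde}(iv) against the quasi-continuous version $\tilde u$ furnished by Proposition \ref{quasicontinuite:EDPS}. The weak formulation \eqref{weak:RSPDE} is obtained by passing to the limit in the weak formulation of the penalized equations, using the $\mathcal{H}_T$-bounds for the linear terms and the uniform convergence of $K^{\pm,n}$ against test functions. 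For the quasilinear case I would set up the Picard map $(Y,Z)\mapsto(\bar Y,\bar Z)$ by plugging the previous iterate into $f,g,h$, solve the corresponding linear two-obstacle problem, and show the map is a contraction on $\mathcal{H}_T$ equipped with a suitable weighted norm — here Assumption \textbf{(H)}(ii)--(iv) is exactly what is needed to absorb the $\mathrm{div}\,g$ and $h\cdot\overleftarrow{dB}$ contributions. Uniqueness then follows either from the contraction or, more robustly, from an Itô formula for the difference of two solutions (the doubly reflected analogue of \eqref{Ito:BSDE}), where the two reflection terms have the right signs: $(Y^1-Y^2)(dK^{+,1}-dK^{+,2})\le 0$ and $(Y^1-Y^2)(dK^{-,1}-dK^{-,2})\ge 0$ by the Skorokhod conditions.

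\textbf{Main obstacle.} The hardest point is the convergence of the two families of increasing processes $K^{+,n}$ and $K^{-,n}$ to \emph{continuous} limits and the identification of the limits as legitimate (extended) regular measures satisfying the \emph{minimal} Skorokhod conditions simultaneously for both obstacles — this is precisely the step that fails in the purely analytic approach (as the authors note, one cannot separate the limiting signed measure), and it is made to work here only through the stochastic monotone convergence theorem of Peng--Xu and the strict separability hypothesis \textbf{(HO)}(iv), which prevents the two reflections from acting at the same space-time points and keeps the masses of $K^{\pm,n}$ under control. Everything else — the $\mathcal{H}_T$-estimates, the Picard iteration, the comparison theorem — is technical but follows the now-standard template of \cite{MatoussiStoica, DMZ12}.
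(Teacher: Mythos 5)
Your overall strategy is the one the paper follows: work on the probabilistic side via the Feynman--Kac dictionary, penalize one obstacle while reflecting on the other, use the separating solution $z$ of \eqref{obstacle} together with the strict gap $\kappa$ of \textbf{(HO)}(iv) to control the masses of the reflection terms, invoke Peng--Xu for the limiting increasing processes, and close the quasilinear case by a Picard contraction under \textbf{(H)}(iv). Two remarks. First, a cosmetic difference: the paper does not double-penalize; it directly takes $(u^n,\nu^{+,n})$ to be the solution of the \emph{one-obstacle} problem (reflected below) with the extra drift $-n(u^n-\overline v)^+$, so only one limit $n\to\infty$ is needed — your two-parameter scheme is workable but redundant given that the one-obstacle result of \cite{MatoussiStoica} is already available (and note that in your scheme, sending $m\to\infty$ first yields reflection on the \emph{upper} obstacle, not the lower one as you wrote).

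Second, and more substantively, the phrase ``and similarly for $K^-$'' hides the genuinely hard step, and as stated it would not go through. The two families are not symmetric: $K^{+,n}$ is monotone in $n$ (by the comparison theorem, $dK^{+,n+1}\geq dK^{+,n}$), so Peng--Xu's Lemma 3.2 applies directly and gives a continuous increasing limit $K^+$. But $K^{-,n}_t=n\int_0^t(Y^n_s-U_s)^+ds$ is \emph{not} monotone in $n$, so the same lemma cannot be invoked for it. The paper instead (i) introduces the stopping times $\tau_N=\inf\{t:K^+_t=N\zeta(W_0)\}\wedge T$ to get $L^2$ bounds on $K^{\pm,n}_{\tau_N}$, (ii) applies the full stochastic monotonic convergence theorem (Theorem 3.1 of \cite{PengXu}) to the semimartingales $Y^n_{\cdot\wedge\tau_N}$, which only produces $K^-$ as the bounded-variation part of a c\`adl\`ag limit, and (iii) recovers the continuity of $K^-$ a posteriori, from the continuity of $Y$ established through the Cauchy estimate \eqref{cauchy:YZ} — which itself requires the ``fundamental lemma'' $\mathbb{E}\mathbb{E}^m[(\sup_t(Y^n_t-U_t)^+)^2]\to0$ (proved via It\^o for $((Y^n-\tilde Y)^+)^2$ and Dini's lemma) to control the cross term $\int(Y^n_s-Y^p_s)\,d(K^{-,n}_s-K^{-,p}_s)$. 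Your proposal correctly identifies Peng--Xu and the separability hypothesis as the reasons the probabilistic approach succeeds where the analytic one fails, but without this asymmetric treatment the convergence and continuity of $K^-$, and hence the identification of $\nu^-$ as a random extended regular measure satisfying the minimal Skorokhod condition, are not established.
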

\section{Proof of Theorem \ref{maintheorem}}

In order to solve the problem, we will use the backward stochastic differential equation  technics. 
We shall begin with the linear case whose proof is based on the penalization procedure and then use a fixed point argument. Since we are  first going to consider the solution of our SPDE reflected on the lower obstacle and penalized on the upper obstacle, we recall the result in the one-obstacle case.
\subsection{The probabilistic interpretation of the solution of the one-obstacle problem}
In \cite{MatoussiStoica}, the one-obstacle problem was studied, it corresponds to the case of two-obstacle problem by taking 
$\overline{v}=+\infty$ and $\nu =\nu^+$:
\begin{equation}
\begin{split}
\label{OSPDE1}
& (i') \; \;    u \geq v , \quad d\mathbb{P}\otimes dt\otimes dx - \mbox{a.e.},  \\
& (ii')\;\; du_t (x) + \big[
 \;  \frac{1}{2} \Delta u_t (x)  + f_t(x,u_t (x),\nabla u_t (x))+ \mbox{div} g_t \left(x,u_t\left( x\right)
,\nabla u_t\left( x\right) \right) \, \big]\,  dt\\
& \hspace*{3cm} +  h_t(x,u_t(x),\nabla u_t(x))\cdot \overleftarrow{dB}_t  = - \nu (dt,x), \quad a.s.,  \\
& (iii')\;\;  \nu \big(u > v \big) =0, \quad a.s.,\\
 & (iv')  \; \;  u_T = \Psi, \quad d\mathbb{P}\otimes dx-\mbox{a.e.}.
\end{split}
\end{equation}
The main result in Matoussi and Stoica  \cite{MatoussiStoica} (see Theorem 4 and Corollary 2)  is the following which gives a probabilistic interpretation of the solution:
\begin{theorem}
Assume {\bf (H)}, {\bf (HD2)} and that the lower obstacle $v$ satisfies:
\begin{enumerate}
\item   $ v (t,\omega, x)$ is a predictable random function  with respect to the backward filtration  $\left(\mathcal{F}^B_{t,T}\right)_{t\in[0,T]}$, 
\item  $ t \mapsto v(t,\omega, W_t) $  is $\mathbb{P} \otimes \mathbb{P}^m$\textbf{-}a.e. continuous on $[0,T]$,
\item $v (T, \cdot) \leq \Psi (\cdot)$,
\item $ \mathbb{E}\mathbb{E}^m\left[ \sup_{t\in [0,T]}({v}^-(t,\cdot , W_t ))^{2} \right]<+\infty$.
\end{enumerate} 
Then OSPDE \eqref{OSPDE1} has a unique solution $u$ in $\mathcal{H}_T$.\\
Moreover, the triple of processes $\left(Y_t, Z_t, K_t \right)_{t \in [0,T]}$, the unique solution of the  following reflected backward doubly stochastic differential equation (in short RBDSDE) :
\begin{equation}
\begin{split}
\label{RBDSDE}
Y_{t} = \Psi\left( W_T\right) &+\int_{t}^{T}f_s
\left(W_{s},Y_{s},Z_{s}
\right) ds + K_T - K_t -\frac{1}{2}\int_{t}^{T}g_s\left(W_{s},Y_{s},
Z_{s}\right) *dW_s\\
& + \int_{t}^{T}h_s
\left(W_{s},Y_{s},Z_{s}
\right)\cdot  \overleftarrow{dB}_s -\sum\limits_{i}\int_{t}^{T}Z_{i,s}dW_{s}^{i} \,
\end{split}
\end{equation}
with $Y_t\geq L_t,\; \forall \, t \in [0,T] $,  $\left(K_t\right)_{t \in[0,T]}$ being an increasing continuous process, $K_0=0$ and    \begin{equation}
\label{reflection:minimum}
\displaystyle \int_{0}^{T}(Y_t-L_t)dK_t =  0
\end{equation}
is given by: $Y_t = u(t,W_t)$, $ Z_t =\nabla u(t,W_t )$, $L_t=v(t,W_t)$ and for any $\mathcal{\varphi }\in \mathcal{D}_T$, $$\mathcal{\nu }\left( \mathcal{\varphi }\right) =\mathbb{E}^{m}\int_{0}^{T}\mathcal{%
\varphi }\left( t,W_{t}\right) dK_{t}\,.$$\end{theorem}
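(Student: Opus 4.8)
The plan is to work first on the probabilistic side: I would establish existence and uniqueness for the reflected BDSDE \eqref{RBDSDE} and then transfer the information to the SPDE \eqref{OSPDE1} through a nonlinear Feynman--Kac representation. To solve \eqref{RBDSDE} I would use a penalization scheme. For each $n\in\N$ consider the non-reflected BDSDE
\begin{equation*}
\begin{split}
Y_t^n = \Psi(W_T) &+ \int_t^T f_s(W_s, Y_s^n, Z_s^n)\, ds + n\int_t^T (Y_s^n - L_s)^-\, ds - \frac12 \int_t^T g_s(W_s, Y_s^n, Z_s^n) * dW_s \\
&+ \int_t^T h_s(W_s, Y_s^n, Z_s^n) \cdot \overleftarrow{dB}_s - \sum_i \int_t^T Z_{i,s}^n\, dW_s^i ,
\end{split}
\end{equation*}
and set $K_t^n := n\int_0^t (Y_s^n - L_s)^-\, ds$. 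Existence and uniqueness of $(Y^n,Z^n)$ for fixed $n$ follows from a fixed-point argument on the BDSDE counterpart of $\mathcal{H}_T$: applying the It\^o formula \eqref{Ito:BSDE} to the difference of two candidate solutions and using the Lipschitz bounds \textbf{(H)(i)--(iii)}, the contraction property \textbf{(H)(iv)}, $\alpha+\beta^2/2<\frac12$, is exactly what makes the solution map a strict contraction; note that the barrier $L_t=v(t,W_t)$ is continuous by hypothesis~2 and satisfies $L_T\le\Psi(W_T)$ by hypothesis~3, so the reflection problem is well posed.

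Next I would derive uniform a priori estimates on $(Y^n,Z^n,K^n)$ by applying \eqref{Ito:BSDE} to $|Y_t^n|^2$, controlling the obstacle contribution by hypothesis~4 on $v^-$ and by Assumption~\textbf{(HD2)}. The comparison theorem for BDSDEs gives monotonicity $Y^n\le Y^{n+1}$, so that $Y^n\uparrow Y$; the a priori bounds then yield convergence of $Z^n$ and, by the standard reflected-BDSDE arguments (and monotone-convergence results for the increasing parts where strong convergence is needed), of $K^n$ to a continuous increasing process $K$ with $K_0=0$. Passing to the limit produces a solution $(Y,Z,K)$ of \eqref{RBDSDE} with $Y\ge L$, and since the penalization acts only on $\{Y^n<L\}$ the Skorokhod minimality condition $\int_0^T(Y_t-L_t)\,dK_t=0$ holds in the limit. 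Uniqueness of $(Y,Z,K)$ follows from the usual computation: for two solutions, the relations $\int_0^T(Y_t-L_t)\,dK_t=0$ and $Y\ge L$ force the cross term $\int_0^T(Y^1-Y^2)\,d(K^1-K^2)\le 0$, which closes the energy estimate.

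The heart of the argument is the construction of the random field $u$. Since $B$ and $W$ are independent under $\mathbb{P}\otimes\mathbb{P}^m$ and $W$ is Markov, I would show that $Y_t$ is, conditionally on $\omega\in\Omega$, a deterministic function of $W_t$, i.e.\ that there exists $u\in\mathcal{H}_T$ with $Y_t=u(t,W_t)$ and $Z_t=\nabla u(t,W_t)$, the membership $u\in\mathcal{H}_T$ coming from the a priori estimates. Testing \eqref{RBDSDE} against $\varphi\in\mathcal{D}_T$ and applying the doubly stochastic It\^o formula of Theorem~\ref{FK} (carrying the extra finite-variation term $dK$) together with the divergence-term representation \eqref{divergence:term} converts the RBDSDE into the weak formulation of equation $(ii')$ in \eqref{OSPDE1}. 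The increasing process $K$ then defines, through Theorem~\ref{potentielregulier}, a regular measure $\nu$ by $\nu(\varphi)=\E^m\int_0^T\varphi(t,W_t)\,dK_t$, and the quasi-continuity of $u$ provided by Proposition~\ref{quasicontinuite:EDPS} makes the version $\tilde u$ and the support condition meaningful.

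Finally I would identify the Skorokhod condition with the support condition $(iii')$: by property~1 of regular measures a Borel set $B$ satisfies $\nu(B)=0$ if and only if $\int_0^T{\bf 1}_B(t,W_t)\,dK_t=0$, and applying this to $B=\{u>v\}$ together with the continuity of $t\mapsto(\tilde u_t-v_t)(W_t)$ turns $\int_0^T(Y_t-L_t)\,dK_t=0$ into $\nu(u>v)=0$. For uniqueness of the SPDE solution, given any weak solution $u$ of \eqref{OSPDE1} with associated measure $\nu$ and additive functional $K$, I would run the It\^o formula in the reverse direction to exhibit $(u(t,W_t),\nabla u(t,W_t),K)$ as a solution of \eqref{RBDSDE}; uniqueness at the BDSDE level then forces uniqueness of $u$. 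I expect the main obstacle to be precisely the Feynman--Kac gluing of the previous paragraph: rigorously showing that the probabilistically defined $Y$ has the form $u(t,W_t)$ with $u\in\mathcal{H}_T$, and that the increasing process $K$ corresponds to a genuine \emph{regular} measure, is where the analytic and probabilistic pictures must be matched and where quasi-continuity and the potential-theoretic characterization of Theorem~\ref{potentielregulier} carry the essential weight.
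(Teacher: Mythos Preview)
The paper does not actually prove this theorem: it is quoted as the main result of Matoussi and Sto\"{\i}ca \cite{MatoussiStoica} (Theorem~4 and Corollary~2), and the present paper builds on it to treat the two-obstacle case. So there is no ``paper's own proof'' to compare against; I can only comment on whether your sketch matches the method of the cited reference and of the analogous two-obstacle construction carried out later in this paper.

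Your overall strategy---penalize, get uniform estimates, use comparison to obtain monotonicity, pass to the limit, identify the Skorokhod condition with $\nu(u>v)=0$---is the correct one and is exactly the route taken in \cite{MatoussiStoica} and mirrored in Section~\ref{section:penalization} here. One point, however, is handled differently and your version would be substantially harder to make rigorous. You propose to solve the RBDSDE first and then, \emph{a~posteriori}, argue that ``$Y_t$ is, conditionally on $\omega\in\Omega$, a deterministic function of $W_t$'' so as to produce $u\in\mathcal{H}_T$ with $Y_t=u(t,W_t)$. The method in \cite{MatoussiStoica} (and in the present paper, see the lemma in Section~4.5 using Mazur's lemma) avoids this direct Markovian identification entirely: one invokes the \emph{unreflected} Feynman--Kac representation (Theorem~\ref{FK}) at the level of the penalized equations, which already yields $u^n\in\mathcal{H}_T$ with $Y^n_t=u^n(t,W_t)$ and $Z^n_t=\nabla u^n(t,W_t)$; the uniform $L^2$ bounds on $(Y^n,Z^n)$ then transfer to $(u^n)$ in $\mathcal{H}_T$, and passing to the limit (after extracting strongly convergent convex combinations of the $Z^n$ via Mazur) gives $u$ directly. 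In other words, the ``gluing'' you flag as the main obstacle is circumvented by doing the SPDE/BDSDE correspondence \emph{before} the reflection is introduced, not after. Everything else in your outline---the contraction argument under \textbf{(H)(iv)}, the role of hypotheses~2--4 on $v$, the translation of the Skorokhod condition via property~1 of regular measures---is in line with the cited proof.
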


\subsection{Approximation by the penalization method in the linear case}
\label{section:penalization}
We begin with the linear case, i.e. assume that $f$, $g$ and $h$ do not depend on $(u,\nabla u)$ . In other words we consider the following DOSPDE:
\begin{equation}
\label{OSPDElinear} \left\{\begin{split}& d u (t,x) +
 \;  \frac{1}{2} \Delta u (t,x)dt  + f_t (x)dt+ \mbox{div} g_t (x)dt +h_t (x)\cdot\overleftarrow{dB}_t +\nu^+ (dt,x)-\nu^- (dt,x)=0,\\
 &\underline{v}(t,x)\leq u(t,x)\leq \overline{v}(t,x),\\
 &\int_0^T\int_{\bbR^d} \left( \tilde{u}(t,x)-\underline{v}(t,x)\right)\nu^+ (dt,dx)=\int_0^T\int_{\bbR^d} \left( \overline{v}(t,x)-\tilde{u}(t,x)\right)\nu^- (dt,dx)=0,\\
 &u_T=\Psi, \end{split}\right. 
\end{equation}
where $f=f^0$, $g=g^0$, $h=h^0$ satisfy hypothesis {\bf (HD2)} and the obstacles $\underline{v}$ and $\overline{v}$ satisfy {\bf (HO)}.
\\For $n\in\mathbb{N}$, let  $(u^n, \nu^{+,n})$ be the solution of the following SPDE with lower obstacle:
\begin{equation}{\nonumber}
\label{SPDE:n} \left\{\begin{split}& du^n_t(x)+
 \frac{1}{2} \Delta u^n_t (x)dt +  f_t(x) dt  - n(u^n_t (x) -\overline{v}_t (x))^{+}dt 
 + \mbox{div} g_t(x)dt +  h_t(x)\cdot \overleftarrow{dB}_t+\nu^{+,n}(dt,x)= 0,\\
 & u^n(t,x)\geq \underline{v}(t,x),\\
 &\int_0^T\int_{\bbR^d}\left( \tilde{u}^n(t,x)-\underline{v}(t,x)\right)\nu^{+,n} (dt,dx)=0,\ \ \ \ \ \ \ \hspace{6.5cm} \eqref{SPDE:n}\\
 &u^n_T=\Psi. \end{split}\right. \\
\end{equation}
\addtocounter{equation}{1}

We denote  by $Y_t^n = u^n (t, W_t ) $, $ Z_t^n = \nabla u^n (t, W_t) $, $L_t=\underline{v}(t,W_t)$, $U_t = \overline{v} (t, W_t)$ and $\xi=\Psi(W_T)$. 
From  Theorem 4  in  Matoussi and Stoica \cite{MatoussiStoica} and for each $ n \in \mathbb N$,  there exists a unique quasi-continuous solution $u^n$ of the obstacle problem \eqref{SPDE:n}. Thus, $Y^n$ is $\mathbb{P}\otimes \mathbb{P}^m\textbf{-}a.e.$ continuous and   by  Corollary 2 in  \cite{MatoussiStoica},
the triplet $\big(Y^n, Z^n, K^{+,n} \big)$  solves the RBSDE  associated to the data $ (\Psi, f^n, g, h, L)$ with $f^n=f-n\,(Y^n-U)^+$, 
\begin{equation}
\label{BSDE:n}\left\{\begin{split}
&Y_{t}^n = \xi +\int_{t}^{T}f_s(W_s)ds - n \int_t^T (Y_s^n  - U_s)^{+}ds -\frac{1}{2}\int_{t}^{T}g_s*dW_s
+ \int_{t}^{T}h_s(W_s)\cdot\overleftarrow{dB}_s\\& \qquad\ -\sum\limits_{i}\int_{t}^{T}Z^n_{i,s}dW_{s}^{i} + K_T^{+,n}-K_t^{+,n},\\
&Y^n_t \geq L_t\,,\\
&\int_0^T (Y^n_t -L_t )dK^{+,n}_t =0\,.
\end{split}\right.\end{equation} 
From now on, we denote $K_{t}^{-,n}:=n\displaystyle \int_{0}^{t}(Y_{s}^{n}-U_{s})^{+}ds$. 

\begin{remark} $\quad$ \\
\begin{enumerate}
\item 
In \eqref{BSDE:n}, $(B_t)_{0\leq t\leq T}$ and $(W_t)_{0\leq t\leq T}$ are two mutually independent Brownian motions, with values respectively
in $\bbR^{d^1}$ and in $\bbR^d$. The backward filtration $\cF_{t,T}^B$ has been defined in Subsection \ref{DoublySto}
 and let $\cF_T^B:=\cF^B_{0,T}$. We also consider the following family of $\sigma-$fields $\cF_t^W:=\sigma(W_s,0\leq s\leq t)$. For any $t\in[0,T]$, we 
 define 
$$\cF_t:=\cF_t^W\vee\cF_{t,T}^B\ \ and\ \ \cG_t:=\cF_t^W\vee\cF_{T}^B\,.$$
Note that the collection $(\cF_t)_{t\in[0,T]}$ is neither increasing nor decreasing and it does not constitute a filtration. However, $(\cG_t)_{t\in[0,T]}$ is a filtration. 
\item  {\red{ Thanks to the comparison theorem (Lemma \ref{comparisonthmlinearRBDSDE} in the Appendix),  $(Y^n)_n$ is a non-increasing sequence since $f^n=f-n\,(Y^n-U)^+$is a non-increasing sequence}}. \\
\end{enumerate}
\end{remark}


We denote by $ \tilde Y_t = z(t, W_t ) $, $  \tilde Z_t = \nabla z (t, W_t)$ {\red{where $z$ satisfies the equation \eqref{obstacle}}}, then {\red from Theorem \ref{FK}}, 
$( \tilde Y,  \tilde Z)$ solves the following BDSDE: 
\begin{equation}\label{BDSDE:z}
 \tilde Y_{t} = \tilde \xi +\int_{t}^{T}  \tilde f_s(W_s)ds-\frac{1}{2}\int_{t}^{T} \tilde g_s*dW_s+ \int_{t}^{T}  \tilde h_s(W_s)\cdot\overleftarrow{dB}_s -\int_{t}^{T}  \tilde Z_{s}\,dW_s,
\end{equation}
where $ \tilde \xi=  \tilde \Psi (W_T)$. Moreover,   we have the following relation : $$L_t-  \tilde Y_t\leq-\kappa<0\leq U_t-\tilde Y_t.$$
\begin{lemma}
\label{penalization:estimate1}
There exists a constant $C$ independent of $n$ such that 
\begin{equation}\label{estimate1:n}
\mathbb{E}\mathbb{E}^m\sup_{t\in[0,T]}|Y_t^n|^2+\mathbb{E}\mathbb{E}^m\int_0^T|Z_t^n|^2dt + \mathbb{E}\mathbb{E}^m |K^{+,n}_T -K^{-,n}_T |^2\leq C.
\end{equation}
\end{lemma}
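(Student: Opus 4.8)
The goal is the uniform a priori bound \eqref{estimate1:n} for the penalized solutions. The natural tool is It\^o's formula applied to $|Y^n_t|^2$, which is available through Corollary \ref{FK2} (equivalently the representation \eqref{Ito:BSDE}), since $(u^n,\nu^{+,n})$ solves the one-obstacle problem \eqref{SPDE:n} and hence $(Y^n,Z^n,K^{+,n})$ solves the RBDSDE \eqref{BSDE:n}. The subtlety is that $K^{+,n}$ appears with a $+$ sign and $K^{-,n}=n\int_0^\cdot(Y^n_s-U_s)^+ds$ with a $-$ sign, and we cannot control $K^{+,n}_T$ and $K^{-,n}_T$ separately by a naive Gronwall argument; the Skorokhod condition for the lower reflection couples $K^{+,n}$ to $L$, while the penalization term must be controlled by comparison with the reference process $\tilde Y$ from \eqref{BDSDE:z}.

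First I would apply It\^o's formula to $|Y^n_t|^2$ between $t$ and $T$, take $\mathbb{E}\mathbb{E}^m$, and use the contraction hypothesis \textbf{(H)(iv)} $\alpha+\beta^2/2<1/2$ together with Young's inequality to absorb the $\langle Z^n,g\rangle$ and $|h|^2$ terms into $\frac12\mathbb{E}\mathbb{E}^m\int_t^T|Z^n_s|^2ds$; note that in the linear case $f=f^0$, $g=g^0$, $h=h^0$ do not depend on $(y,z)$, so this step is even cleaner. This produces a bound of the form
\begin{equation*}
\mathbb{E}\mathbb{E}^m|Y^n_t|^2+c\,\mathbb{E}\mathbb{E}^m\int_t^T|Z^n_s|^2ds \leq C_0 + 2\,\mathbb{E}\mathbb{E}^m\!\int_t^T Y^n_s\,dK^{+,n}_s - 2\,\mathbb{E}\mathbb{E}^m\!\int_t^T Y^n_s\,dK^{-,n}_s,
\end{equation*}
with $C_0$ depending only on $\|\Psi\|$ and the \textbf{(HD2)}-norms of $f^0,g^0,h^0$. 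For the two drift-against-$dK$ terms I use the Skorokhod conditions: $\int_t^T Y^n_s\,dK^{+,n}_s=\int_t^T L_s\,dK^{+,n}_s$ and, since $dK^{-,n}_s$ is carried by $\{Y^n_s\geq U_s\}$, $\int_t^T Y^n_s\,dK^{-,n}_s\geq \int_t^T U_s\,dK^{-,n}_s$. This reduces everything to controlling $\mathbb{E}\mathbb{E}^m\int_0^T|L_s|\,dK^{+,n}_s$ and $\mathbb{E}\mathbb{E}^m\int_0^T|U_s|\,dK^{-,n}_s$, hence ultimately $\mathbb{E}\mathbb{E}^m K^{+,n}_T$ and $\mathbb{E}\mathbb{E}^m K^{-,n}_T$, using Remark \ref{IntergrabilitiObstacles} which gives $\mathbb{E}\mathbb{E}^m[\sup_t(\underline v^+(t,W_t))^2]<\infty$ and $\mathbb{E}\mathbb{E}^m[\sup_t(\overline v^-(t,W_t))^2]<\infty$.

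The main obstacle is obtaining the uniform bound on $\mathbb{E}\mathbb{E}^m K^{-,n}_T$ (the penalization mass); this is where hypotheses \textbf{(HO)(iii)}–\textbf{(iv)} enter. The trick is to compare $Y^n$ with $\tilde Y$ from \eqref{BDSDE:z}: apply It\^o's formula to $|(Y^n_t-\tilde Y_t)^+|^2$ or work with $\langle Y^n_t-\tilde Y_t\rangle$ directly. Since $\underline v\le \tilde Y=z\le\overline v$, the lower-reflection term satisfies $\int(Y^n_s-\tilde Y_s)^-\,dK^{+,n}_s=0$ wherever it is active (as $Y^n_s=L_s\le\tilde Y_s$ there), while $\int (Y^n_s-\tilde Y_s)^+\,dK^{-,n}_s = \int(Y^n_s-\tilde Y_s)^+ n(Y^n_s-U_s)^+ds \ge \kappa\,n\int(Y^n_s-U_s)^+ds = \kappa K^{-,n}_T$ on the set $\{Y^n_s>U_s\}$, because there $Y^n_s-\tilde Y_s\ge U_s-\tilde Y_s\ge$ … — more carefully, one uses the strict separability $\overline v_t-z_t\ge 0$ together with $\underline v_t-z_t\le-\kappa$ so that on $\{Y^n_s>U_s\}$ one has $Y^n_s-\tilde Y_s> U_s-\tilde Y_s\ge 0$; to extract the factor $\kappa$ one instead compares with the shifted reference $z+\text{(something)}$ or, as in \cite{MatoussiStoica}-type arguments, estimates $\int(Y^n-\tilde Y)dK^{-,n}$ from below by a multiple of $K^{-,n}_T$ using that $(Y^n-\tilde Y)$ is bounded below on the support of $dK^{-,n}$ after also handling the lower obstacle. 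The resulting inequality $\kappa\,\mathbb{E}\mathbb{E}^m K^{-,n}_T\le C_1 + \tfrac12\mathbb{E}\mathbb{E}^m\int_0^T|Z^n_s-\tilde Z_s|^2ds$ combined with the first estimate, after choosing constants in Young's inequality small enough, closes the loop and yields $\mathbb{E}\mathbb{E}^m K^{-,n}_T\le C$; then $\mathbb{E}\mathbb{E}^m K^{+,n}_T\le C$ follows from the $|Y^n|^2$ It\^o formula read at $t=0$, and finally $\mathbb{E}\mathbb{E}^m|K^{+,n}_T-K^{-,n}_T|^2$ is recovered by taking $t=0$ in \eqref{BSDE:n}, isolating $K^{+,n}_T-K^{-,n}_T$, and using the BDG inequality (valid for the $*dW$ integral by the Remark following Theorem on the divergence term) together with the already-established $L^2$ bounds on $Y^n$ and $Z^n$. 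The supremum bound $\mathbb{E}\mathbb{E}^m\sup_t|Y^n_t|^2$ then comes from the same It\^o formula by moving the sup inside, applying BDG to the martingale parts, and absorbing.
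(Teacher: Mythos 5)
Your strategy has a genuine gap at the step that is supposed to control the reflection and penalization terms. Working with $|Y^n_t|^2$ and reducing the problem to bounding $\mathbb{E}\mathbb{E}^m\int_0^T|L_s|\,dK^{+,n}_s$ and $\mathbb{E}\mathbb{E}^m\int_0^T|U_s|\,dK^{-,n}_s$ does not close: the obstacles are only square-integrable along the Brownian path (Remark \ref{IntergrabilitiObstacles} gives $\sup_t(\underline v^+(t,W_t))$ and $\sup_t(\overline v^-(t,W_t))$ in $L^2$, not $L^\infty$), so the $L^1$ bounds on $K^{\pm,n}_T$ you invoke cannot control these pairings; you would need uniform $L^2$ bounds on $K^{+,n}_T$ and $K^{-,n}_T$ separately, which are precisely what is unavailable (the paper never obtains them for the unstopped processes — only the difference $K^{+,n}_T-K^{-,n}_T$ is bounded in $L^2$, and the stopped processes $K^{\pm,n}_{\tau_N}$ are bounded in $L^2$ only later). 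Moreover, the mechanism you propose for bounding $\mathbb{E}\mathbb{E}^m K^{-,n}_T$ — extracting a factor $\kappa$ from $Y^n_s-\tilde Y_s\ge U_s-\tilde Y_s$ on the support of $dK^{-,n}$ — fails, because \textbf{(HO)(iv)} only gives $\overline v_t-z_t\ge 0$; the quantitative gap $\kappa$ sits on the lower side, $\underline v_t-z_t\le-\kappa$. You noticed this yourself and left the step unresolved. In the paper, $\kappa$ is exploited only in the \emph{next} lemma, through the cutoff $\psi$ (with $\psi'\equiv1$ below $-\kappa$ and $\psi'\equiv0$ above $-\kappa/2$), and it produces an $L^1$ bound on $K^{+,n}_T$, not on $K^{-,n}_T$.

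The paper's proof of the present lemma is simpler and bypasses all of this: apply It\^o's formula to $(Y^n-\tilde Y)^2$ from the outset, where $\tilde Y_t=z(t,W_t)$ is the reference solution squeezed between the obstacles. The Skorokhod condition then gives $\int_t^T(Y^n_s-\tilde Y_s)\,dK^{+,n}_s=\int_t^T(L_s-\tilde Y_s)\,dK^{+,n}_s\le 0$ since $L\le\tilde Y$, and the penalization term satisfies $n\int_t^T(Y^n_s-\tilde Y_s)(Y^n_s-U_s)^+\,ds=n\int_t^T\bigl((Y^n_s-U_s)^+\bigr)^2ds+n\int_t^T(U_s-\tilde Y_s)(Y^n_s-U_s)^+\,ds\ge 0$ since $\tilde Y\le U$. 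Both terms enter with favorable signs and are simply discarded; Cauchy--Schwarz, Gronwall and Burkholder--Davis--Gundy then yield the bounds for $Y^n-\tilde Y$ and $Z^n-\tilde Z$, hence for $Y^n,Z^n$ after adding the known estimate for $(\tilde Y,\tilde Z)$. No individual control of $K^{+,n}_T$ or $K^{-,n}_T$ is needed for \eqref{estimate1:n}; the last term is obtained, as you correctly propose, by isolating $K^{+,n}_T-K^{-,n}_T$ from the equation at $t=0$. I recommend restructuring your argument around $(Y^n-\tilde Y)^2$.
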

\begin{proof}
Applying It\^o's formula to $(Y^n- \tilde Y)^2$ (see Lemma \ref{itolowerobstacle}), for any $t\in[0,T]$, we have almost surely
\begin{equation}
\begin{split}
\label{itoynmoinsy2}
&|Y_t^{n}-\tilde  Y_t|^2 + \int_t^T |Z_s^{n}- \tilde Z_s|^2 ds =|\xi- \tilde  \xi|^2+ 2 \int_t^T (Y_s^{n}-\tilde  Y_s) (f_s(W_s)- \tilde  f_s(W_s))\,ds\\& - \int_t^{T} (Y_s^{n}-\tilde  Y_s) (g_s-\tilde  g_s)*dW_s+ 2 \int_{t}^{T} (Y_s^{n}-\tilde  Y_s) (h_s(W_s)- \tilde  h_s(W_s))\cdot\overleftarrow{dB}_s\\& -  2 \int_{t}^{T} (Y_s^{n}-\tilde  Y_s) (Z_{s}^{n}-\tilde  Z_s)\,dW_{s} - 2\int_t^T \langle Z_s^{n}- \tilde  Z_s, g_s(W_s)-\tilde  g_s(W_s) \rangle\, ds\\& + \int_t^T |h_s(W_s)-\tilde  h_s(W_s)|^2  ds+  2\int_t^T(Y_s^{n}-\tilde  Y_s)dK_s^{+,n} - 2n\int_t^T (Y_s^{n}-\tilde  Y_s) (Y_s^{n}-U_s)^+ds.
\end{split}
\end{equation}
From the Skorokhod condition \eqref{BSDE:n}, we get  $$\int_t^T(Y_s^{n}-\tilde  Y_s)dK_s^{+,n}=\int_t^T(L_s- \tilde  Y_s)dK_s^{+,n}\leq0$$ and 
\begin{equation*}\begin{split}
n\int_t^T(Y_s^{n}-\tilde  Y_s) (Y_s^{n}-U_s)^+ds&=n\int_t^T(Y_s^{n}-U_s+U_s-\tilde  Y_s) (Y_s^{n}-U_s)^+ds\\
&=\int_t^Tn((Y_s^n-U_s)^+)^2ds+\int_t^Tn(U_s-\tilde  Y_s)(Y_s^n-U_s)^+ds\geq0. 
\end{split}\end{equation*} Therefore, using Cauchy-Schwarz's inequality, trivial inequalities such as $2ab\leq 2a^2 +\displaystyle\frac{b^2}{2}$ and then  taking expectation under $\mathbb{P}\otimes\mathbb{P}^m$, we obtain
\begin{equation}
\begin{split}
\label{estimatynmoinsy2}
\mathbb{E}\mathbb{E}^m|Y_t^n-\tilde  Y_t|^2&+\mathbb{E}\mathbb{E}^m\int_t^T|Z_s^n-\tilde  Z_s|ds\leq \E|\xi- \tilde \xi|^2+\mathbb{E}\mathbb{E}^m\int_t^T|Y_s^n-\tilde  Y_s|^2ds+\frac12\mathbb{E}\mathbb{E}^m\int_t^T|Z_s^n-\tilde  Z_s|^2ds\\&+
\mathbb{E}\mathbb{E}^m\int_t^T\left[|(f_s-\tilde  f_s)(W_s)|^2+2|(g_s-\tilde  g_s)(W_s)|^2+|(h_s-\tilde  h_s)(W_s)|^2\right]ds.
\end{split}
\end{equation}
Hence,
$$\mathbb{E} \mathbb{E}^m|Y_t^{n}-\tilde  Y_t|^2\leq {C}+\mathbb{E} \mathbb{E}^m\int_t^T|Y_s^{n}-\tilde  Y_s|^2ds\, ,$$
where $C$ is a constant independent of $n$ which may vary from line to line.\\
From Gronwall's inequality it then follows that 
$$\sup_{0\leq t\leq T}\mathbb{E} \mathbb{E}^m|Y_t^{n}-\tilde  Y_t|^2\leq C$$
and again from \eqref{estimatynmoinsy2}, we have $$\mathbb{E} \mathbb{E}^m\int_0^T|Z_s^{n}-\tilde  Z_s|^2ds\leq C.$$
Coming back to \eqref{itoynmoinsy2} and using Bukholder-Davis-Gundy's inequality and the above estimates, we get $$\mathbb{E} \mathbb{E}^m\sup_{t\in[0,T]}|Y_t^{n}-\tilde  Y_t|^2\leq C.$$
Then, combining with the estimate for $(\tilde Y,\tilde Z)$ (see for example Theorem 2.1 in \cite{DenisStoica}), we obtain 
$$\mathbb{E}\mathbb{E}^m\sup_{t\in[0,T]}|Y_t^n|^2+\mathbb{E}\mathbb{E}^m\int_0^T|Z_t^n|^2dt\leq C.$$
Finally, we conclude since $$ K^{+,n}_T -K^{-,n}_T =Y_{0}^{n} - \xi -\int_{0}^{T}f_s(W_s)ds +\frac{1}{2}\int_{0}^{T}g_s*dW_s - \int_{0}^{T}h_s(W_s)\cdot\overleftarrow{dB}_s +\int_{0}^{T}Z^{n}_{s}dW_{s}\,.$$  
\end{proof}

Now we introduce a function $\psi\in C^2$ which satisfies $\psi(x)=x$ when $x\in(-\infty, -\kappa]$, $\psi(x)=0$ when $x\in[-\frac{\kappa}{2},+\infty)$.
\begin{lemma} The sequence $(K^{+,n}_T )_n$ is bounded in $L^1 (\mathbb{P}\otimes\mathbb{P}^m )$.
\end{lemma}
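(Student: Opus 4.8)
The plan is to use the function $\psi\in C^2$ introduced just before the statement as a ``smooth cut-off'' that isolates the increments of $K^{+,n}$ while killing the contribution of $K^{-,n}$. The key idea is that, by hypothesis \textbf{(HO)-(iv)} (strict separability), on the event where the lower reflection is active we have $Y^n_s - \tilde Y_s = L_s - \tilde Y_s \le -\kappa$, so $\psi(Y^n_s-\tilde Y_s)=Y^n_s-\tilde Y_s$ and $\psi'=1$ there; conversely, on the region relevant to $K^{-,n}$ (where $Y^n_s$ is near or above $U_s\ge \tilde Y_s$, hence $Y^n_s-\tilde Y_s\ge 0 > -\kappa/2$) we have $\psi=\psi'=0$. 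So the plan is to apply the It\^o formula of the excerpt (Lemma \ref{itolowerobstacle}, or rather the version adapted to $\psi$ applied to the semimartingale $Y^n-\tilde Y$) to $\psi(Y^n_t-\tilde Y_t)$ between $t=0$ and $t=T$.

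First I would write out the It\^o expansion: $\psi(Y^n_0-\tilde Y_0)$ equals $\psi(\xi-\tilde\xi)$ plus a $dr$-term containing $\psi'(Y^n_s-\tilde Y_s)\big[(f_s-\tilde f_s)(W_s)\big]$ and similar first-order terms from the $g$-drift, minus the martingale terms $\int \psi'(Y^n_s-\tilde Y_s)(Z^n_s-\tilde Z_s)\,dW_s$ and the $\overleftarrow{dB}$ term, plus the crucial $-\tfrac12\int\psi''(Y^n_s-\tilde Y_s)|Z^n_s-\tilde Z_s|^2\,ds$ correction, plus $\int_0^T\psi'(Y^n_s-\tilde Y_s)\,dK^{+,n}_s - \int_0^T\psi'(Y^n_s-\tilde Y_s)\,dK^{-,n}_s$. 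Now use the Skorokhod conditions: $dK^{+,n}_s$ is carried by $\{Y^n_s=L_s\}$ where $\psi'(Y^n_s-\tilde Y_s)=1$, so the $K^{+,n}$-term is exactly $K^{+,n}_T$; and $dK^{-,n}_s=n(Y^n_s-U_s)^+\,ds$ is carried by $\{Y^n_s\ge U_s\}\subset\{Y^n_s-\tilde Y_s\ge 0\}$ where $\psi'=0$, so that term vanishes. Since $\psi''$ is bounded with compact support, $|\psi''|\le c_\psi$, and $\psi,\psi'$ are bounded, rearranging gives
\[
K^{+,n}_T \le |\psi(\xi-\tilde\xi)| + c\int_0^T\!\big(|(f-\tilde f)(W_s)| + |(g-\tilde g)(W_s)|\big)\,ds + \tfrac{c_\psi}{2}\int_0^T|Z^n_s-\tilde Z_s|^2\,ds + M_T,
\]
where $M_T$ collects the stochastic integrals (w.r.t. $dW$, $*dW$ and $\overleftarrow{dB}$), each a true martingale under $\mathbb P\otimes\mathbb P^m$ with zero expectation. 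Taking $\mathbb E\mathbb E^m$ of both sides, the martingale term drops out, the first term is finite by \textbf{(HD2)} and integrability of $z$ (Remark \ref{IntergrabilitiObstacles}), the second is finite by \textbf{(HD2)}, and $\mathbb E\mathbb E^m\int_0^T|Z^n_s-\tilde Z_s|^2\,ds\le C$ uniformly in $n$ by Lemma \ref{penalization:estimate1}. Hence $\mathbb E\mathbb E^m[K^{+,n}_T]\le C$, which is the claim.

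The main obstacle I anticipate is purely technical rather than conceptual: justifying the It\^o formula for $\psi\in C^2$ applied to $Y^n-\tilde Y$ in the doubly stochastic setting (the process is not a standard semimartingale w.r.t. a single filtration because of the $*dW$ and $\overleftarrow{dB}$ integrals), and checking that the resulting stochastic integrals are genuine martingales of zero $\mathbb E\mathbb E^m$-expectation — this is where one invokes the independence of $\mathcal F_T$ and $\mathcal F^B_{0,T}$ under $\mathbb P^m\otimes\mathbb P$ and the Burkholder--Davis--Gundy control of $\int g_r*dW_r$ noted in the Remark after \eqref{divergence:term}, together with the square-integrability bounds of Lemma \ref{penalization:estimate1}. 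A secondary point is that one should really apply the It\^o formula of Lemma \ref{itolowerobstacle} (which handles the reflection terms) rather than a bare $C^2$ formula; composing it with $\psi$ is standard since $\psi$ is $C^2$ with bounded derivatives, but it deserves a line. Everything else is a routine estimate.
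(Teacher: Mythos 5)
Your proposal is correct and follows essentially the same route as the paper: apply It\^o's formula to $\psi(Y^n-\tilde Y)$, use the Skorokhod condition together with the strict separability \textbf{(HO)-(iv)} so that the $K^{+,n}$-integral equals $K^{+,n}_T-K^{+,n}_t$ (since $\psi'=1$ where $Y^n=L$) while the $K^{-,n}$-integral vanishes (since $\psi'=0$ where $Y^n\geq U$), and then bound the remaining terms via Lemma \ref{penalization:estimate1}, \textbf{(HD2)} and the boundedness of $\psi'$ and $\psi''$. The only cosmetic slip is the claim that $\psi$ itself is bounded (it equals $x$ on $(-\infty,-\kappa]$), but since $|\psi(x)|\leq |x|+c$ and the relevant quantities are $L^2$-bounded, this does not affect the argument.
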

\begin{proof}
Applying It\^o's formula to $\psi(Y^n- \tilde Y)$, we have almost surely, $\forall t\in[0,T]$, 
\begin{equation}\label{itopsiy}
\begin{split}
\psi(Y_t^n-\tilde  Y_t)=&\,\psi(\xi- \tilde  \xi)+\int_t^T\psi'(Y_s^n- \tilde Y_s)dK_s^{+,n}-\int_t^T\psi'(Y_s^n-\tilde  Y_s)n(Y_s^{n}-U_s)^+ds\\&+\int_t^T\psi'(Y_s^n-\tilde  Y_s)(f_s(W_s)-\tilde  f_s(W_s))ds-\frac{1}{2}\int_t^T\psi'(Y_s^n- \tilde  Y_s)(g_s-\tilde g_s)*dW_s\\&+\int_t^T\psi'(Y^n_s-\tilde  Y_s)(h_s(W_s)-\tilde  h_s(W_s))\cdot\overleftarrow{dB}_s-\int_t^T\psi'(Y_s^n-\tilde  Y_s)(Z_s^n-\tilde  Z_s)dW_s\\&-\frac{1}{2}\int_t^T\psi''(Y_s^n-\tilde  Y_s)|Z_s^n-\tilde  Z_s|^2ds+\frac{1}{2}\int_t^T\psi''(Y_s^n- \tilde Y_s)|h_s(W_s)- \tilde h_s(W_s)|^2ds\\&-\int_t^T\psi''(Y_s^n-\tilde  Y_s)\langle g_s(W_s)-\tilde  g_s(W_s),Z_s^n- \tilde  Z_s\rangle ds.
\end{split}
\end{equation}
We note that $$\int_t^T\psi'(Y_s^n-\tilde  Y_s)dK_s^{+,n}=\int_t^T\psi'(L_s-\tilde  Y_s)dK_s^{+,n}=K_T^{+,n}-K_t^{+,n}$$
and $$\int_t^T\psi'(Y_s^n-\tilde Y_s)n(Y_s^{n}-U_s)^+ds=0,$$
then, combining with Lemma \ref{penalization:estimate1} and using the fact that $\psi'$ and $\psi''$ are bounded, we deduce that, there exists a constant $c > 0$  such that 
\begin{equation}\label{boundnessK+n}
\mathbb{E}\mathbb{E}^m[K_T^{+,n}]\leq c.
\end{equation} 
 \end{proof}
Thus  the sequence of processes $K^{+,n}_t$ is  uniformly bounded in $L^1 (\Omega\times \Omega'\times [0,T] )$. Moreover, 
by comparison theorem (see Lemma \ref{comparisonthmlinearRBDSDE}), we have $dK^{+,n+1}\geq dK^{+,n}$.  \\
Therefore the sequence $(K^{+,n}_t)_n $   converges to a  process denoted  by $K^+$  and by Fatou's lemma, we get 
\begin{equation}\label{boundnessK+n}
\mathbb{E}\mathbb{E}^m[K_T^{+}]\leq c.
\end{equation}  
Moreover,  by Lemma 3.2 in \cite{PengXu},  we have the following result:
\begin{lemma}{\label{contK+}}
$(K_t^{+})_{t \geq 0} $ is an increasing and continuous process.
\end{lemma}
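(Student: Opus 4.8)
The plan is to deduce continuity of the limit process $K^+$ from the stochastic monotone convergence theorem of Peng and Xu, after recasting the convergence of the RBDSDEs \eqref{BSDE:n} in the form required by that result. First I would recall that, by Lemma \ref{penalization:estimate1} and the previous lemma, the sequence $(Y^n,Z^n)$ is bounded in the relevant $L^2$ spaces and, by the comparison theorem (Lemma \ref{comparisonthmlinearRBDSDE}), $Y^n$ decreases pointwise while the increasing processes satisfy $dK^{+,n+1}\geq dK^{+,n}$ and $K^{-,n}_t = n\int_0^t (Y^n_s-U_s)^+ ds$ also increases in $n$. I would then rewrite \eqref{BSDE:n} on the interval $[0,t]$ so that $Y^n_t$ appears as $Y^n_0$ minus a martingale part (the $dW$ and $\overleftarrow{dB}$ integrals plus the divergence term $\frac12\int g_s * dW_s$, which by the Remark following \eqref{divergence:term} is a genuine martingale satisfying BDG) plus the finite-variation part $-\int_0^t f_s(W_s)\,ds + K^{+,n}_t - K^{-,n}_t$; that is, $Y^n$ is a semimartingale of the form "constant $+$ square-integrable martingale $+$ increasing process $K^{+,n}$ $-$ increasing process $K^{-,n}$".

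Next I would check the hypotheses of the Peng--Xu stochastic monotone convergence theorem (their Lemma 3.2): one needs the martingale parts to converge in the appropriate sense and the increasing processes $K^{+,n}$ to be uniformly bounded in $L^1$, which is exactly the content of the preceding lemma, inequality \eqref{boundnessK+n}. Since $Y^n \downarrow Y$ pointwise with $Y$ in $\mathcal H_T$-type spaces (the $L^2$ bounds pass to the limit by Fatou), and since $Z^n \to Z$ and the martingale integrands converge in $L^2(\mathbb P\otimes\mathbb P^m)$ (a fact one gets from the uniform estimates together with monotonicity, or else is established just below in Section \ref{FundLemma}), the theorem yields that the limit $K^+$ of $K^{+,n}$ is a continuous process, and of course it is increasing as an increasing limit of increasing processes with $K^+_0=0$. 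I would also note that $K^{-,n}_T = K^{+,n}_T - (Y^n_0 - \xi - \int_0^T f_s(W_s)ds + \tfrac12\int_0^T g_s*dW_s - \int_0^T h_s(W_s)\cdot\overleftarrow{dB}_s + \int_0^T Z^n_s dW_s)$ converges as well, so the monotone limit $K^-$ is likewise well-defined, though continuity of $K^-$ is not what this particular lemma asserts.

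The main obstacle I anticipate is verifying precisely the convergence hypotheses of the Peng--Xu theorem — in particular that the driving martingales and the integrand $Z^n$ converge strongly in $L^2$, not merely weakly, since monotone convergence of $Y^n$ alone does not immediately give strong convergence of the quadratic-variation (the $\int|Z^n|^2$) terms. One typically circumvents this by applying the Peng--Xu result in the weak form it is designed for: it is tailored exactly to conclude continuity of the monotone limit of increasing processes in a sequence of semimartingales whose martingale parts converge weakly and whose increasing parts are $L^1$-bounded, without needing strong convergence of $Z^n$. So the real work is bookkeeping: identifying the four ingredients (initial values, martingale parts, $K^{+,n}$, $K^{-,n}$) in \eqref{BSDE:n}, checking uniform integrability/boundedness via Lemma \ref{penalization:estimate1} and \eqref{boundnessK+n}, invoking the Remark after \eqref{divergence:term} to handle the divergence term, and then quoting \cite{PengXu} Lemma 3.2 to get continuity of $K^+$. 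I would keep the write-up short, citing the monotone convergence theorem as a black box exactly as the surrounding text does.
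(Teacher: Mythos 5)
Your proposal matches the paper's argument: the paper likewise establishes the $L^1$-bound $\mathbb{E}\mathbb{E}^m[K_T^{+,n}]\leq c$ and the monotonicity $dK^{+,n+1}\geq dK^{+,n}$ from the comparison theorem, and then simply invokes Lemma 3.2 of \cite{PengXu} as a black box to conclude that the monotone limit $K^+$ is increasing and continuous. Your additional bookkeeping about the semimartingale decomposition and the convergence of the martingale parts is consistent with how the paper uses the Peng--Xu theorem again in Section \ref{FundLemma}, so the proof is correct and takes essentially the same route.
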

Moreover, by the comparison theorem (see Lemma \ref{comparisonthmlinearRBDSDE}), 
we know  that $(Y^n )_{n}$ is non-increasing and bounded in $L^2$, as a consequence it converges to a process  that we denote by $Y$ and we have
$$\lim_{n\rightarrow +\infty}\E\E^m \left[\int_0^T |Y^n_t -Y_t|^2 dt\right]=0.$$We are now going to prove that the process $(Y_t )_{t\in[0,T]}$ admits a continuous version which solves \eqref{DRBDSDE}.

\subsection{The fundamental lemma}\label{FundLemma}
We recall that for all $n\in\mathbb{N}$,
\begin{equation*}
\begin{split}
Y_{t}^n =&\, \xi +\int_{t}^{T}f_s(W_s)ds - n\int_t^T (Y_s^n  - U_s)^{+}\, ds -\frac{1}{2}\int_{t}^{T}g_s*dW_s
+ \int_{t}^{T}h_s(W_s)\cdot\overleftarrow{dB}_s\\& -\sum\limits_{i}\int_{t}^{T}Z^n_{i,s}dW_{s}^{i} + K_T^{+,n}-K_t^{+,n}.\end{split}
\end{equation*} 

First of all, by extracting a subsequence if necessary, we can assume that $(Z^n)_{n}$ converges weakly to a process $Z$ in $L^2 (\Omega \times \Omega'\times[0,T])$. \\
Let $\zeta$ be a positive function in $L^2 (\R^d)$, 
we introduce, for each $N>0$, the $\mathcal{G}_t-$stopping time
$$\tau_{N }:=\inf \left\{ t\geq 0, \, K_{t}^+=N \zeta (W_0)\right\}\wedge T.$$
Since  $K^{+}_{T}$ is integrable and $\zeta>0$, $\mathbb{P}\otimes\mathbb{P}^m$-a.e.,  $\tau_N =T$ for $N$ large enough. Moreover, as $K^+_{\tau_N}\leq N\zeta (W_0)\wedge K^+_T$, $K^+_{\tau_N}$ belongs to $L^2 (\mathbb{P}\otimes\mathbb{P}^m )\bigcap L^1 (\mathbb{P}\otimes \mathbb{P}^m )$.
Clearly for each $N$,  $(Z^n_{\cdot\wedge \tau_{N}})_{n}$ converges weakly to  $Z_{\cdot\wedge\tau_{N}}$ in $L^2 (\Omega \times \Omega'\times[0,T])$.\\
We have
\begin{equation}\label{forwardBSDE:n}\begin{split}
Y_{t\wedge \tau_{N}}^{n} =&\, Y^n_{0}  -\int_{0}^{t\wedge \tau_{N}}f_s (W_s)ds+\frac12 \int_{0}^{t\wedge \tau_{N}}g_s * dW_s-\int_{0}^{t\wedge \tau_{N}}h_s(W_s)\cdot\overleftarrow{dB}_s\\&+\int_{0}^{t\wedge \tau_{N}}Z^{n}_{s}dW_{s}-K_{t\wedge \tau_{N}}^{+,n} + n \int_0^{t\wedge \tau_{N}}(Y_s^{n} - U_s)^{+}\, ds\,.\end{split}\end{equation}

Let us remark that for fixed $N$,  the sequence of processes $(K^{+,n}_{\cdot\wedge\tau_{N}})$ is non-decreasing in $n$, bounded in $L^2$ and for each $n$ and $0\leq s\leq t\leq T$, $K^{+,n+1}_{t\wedge\tau_{N}}-K^{+,n+1}_{s\wedge\tau_{N}}\geq K^{+,n}_{t\wedge\tau_{N}}-K^{+,n}_{s\wedge\tau_{N}}$ by the comparison theorem. Moreover, by Lemma \ref{penalization:estimate1} this implies that the sequence of processes $(K^{-,n}_{\cdot\wedge\tau_{N}})$ is also bounded in $L^2$.\\
Then, as a consequence of the stochastic monotonic convergence theorem due to Peng and Xu (see Theorem 3.1 in \cite{PengXu}) we conclude that there exists an increasing process $K^-$ such that, for each $N>0$ and $t\in [0,T]$, $Y_{t\wedge \tau_{N}}^{n}$ converges to 
\begin{equation}\label{Continu}\begin{split}Y_{t\wedge\tau_{N}}=&\,Y_{0} -\int_{0}^{t\wedge \tau_{N}}f_s (W_s)ds-\frac12 \int_{0}^{t\wedge \tau_{N}}g_s * dW_s-\int_{0}^{t\wedge \tau_{N}}h_s(W_s)\cdot\overleftarrow{dB}_s\\&+\int_{0}^{t\wedge \tau_{N}}Z_{s}dW_{s}-K_{t\wedge \tau_{N}}^{+} +K_{t\wedge \tau_{N}}^{-},\end{split}\end{equation}
where both $K_{\cdot\wedge \tau_{N}}^{+}$ and $K_{\cdot\wedge \tau_{N}}^{-}$ hence $Y_{\cdot\wedge\tau_{N}}$ are c\`adl\`ag. Making $N$ tend to infinity we conclude that $Y$ and $K^-$ are c\`adl\`ag.\\
\begin{lemma}{\label{FundamentalLemma}}(Fundamental Lemma)
We have
$$\lim_{n\rightarrow +\infty}\mathbb{E}\mathbb{E}^m \bigg[\Big(\sup_{t\in [0,T]}(Y^n_{t} -U_{t})^+\Big)^2\bigg]=0.$$
\end{lemma}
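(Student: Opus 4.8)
The plan is to show that the penalized term $K^{-,n}$ cannot accumulate mass above the upper obstacle in the limit, and hence the "distance above $U$" of $Y^n$ vanishes uniformly. The natural route is to exploit the fact that the auxiliary process $\tilde Y$ (solution of \eqref{BDSDE:z}) lies strictly below the upper obstacle, $U_t - \tilde Y_t \geq 0$, and strictly above the lower obstacle with a gap $\kappa$. First I would work on a stopped interval $[0,t\wedge\tau_N]$ where everything is $L^2$-controlled, apply It\^o's formula to $\big((Y^n_s - \tilde Y_s)^+\big)^2$ or, more cleverly, to the composition $\psi\big(Y^n_s - \tilde Y_s\big)$ with a suitable convex $C^2$ truncation that equals $x$ for $x$ large (mirroring the function $\psi$ introduced before Lemma 3), so that the bad term $-n\int (Y^n_s-U_s)^+\,ds$ is picked out with a favorable sign while the reflecting term $dK^{+,n}$ is annihilated because $Y^n = L$ on its support and $L - \tilde Y \leq -\kappa$ lies in the flat region of the truncation.

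The key steps, in order: (1) choose the truncation $\phi\in C^2$ convex, nondecreasing, with $\phi(x)=0$ for $x\le 0$ (or $x \le \kappa/2$) and $\phi$ linear with slope $1$ for $x$ large, so that $\phi'\ge 0$, $\phi''\ge 0$, both bounded; (2) apply the doubly stochastic It\^o formula (Lemma on It\^o for the lower-obstacle problem already quoted) to $\phi(Y^n_s - \tilde Y_s)$ on $[t,T]$ — note $\phi(Y^n_T - \tilde Y_T) = \phi(\xi - \tilde\xi)$ is controlled since $\xi \le \overline v(T) = $ bounded-above data and in fact we want to track only the part of $Y^n$ exceeding $U$; (3) observe $\int_t^T \phi'(Y^n_s - \tilde Y_s)\,dK^{+,n}_s = 0$ because on $\{dK^{+,n}>0\}$ we have $Y^n_s = L_s$, so $Y^n_s - \tilde Y_s = L_s - \tilde Y_s \le -\kappa < 0$, which lies in the flat region where $\phi' = 0$; (4) the term $-n\int_t^T \phi'(Y^n_s - \tilde Y_s)(Y^n_s - U_s)^+\,ds$ has the right sign: writing $Y^n_s - \tilde Y_s = (Y^n_s - U_s) + (U_s - \tilde Y_s)$ and noting $(Y^n_s-U_s)^+>0 \Rightarrow Y^n_s - \tilde Y_s \ge U_s - \tilde Y_s \ge 0$ and in fact $\ge (Y^n_s-U_s)^+$, so $\phi'(Y^n_s - \tilde Y_s)\ge c>0$ on the set where $(Y^n_s-U_s)^+$ is bounded below, giving that $n\int_0^T \phi'(Y^n_s - \tilde Y_s)(Y^n_s-U_s)^+\,ds = n\int_0^T(Y^n_s - U_s)^+\,ds = K^{-,n}_T$ up to controlled terms; (5) take expectation under $\mathbb{P}\otimes\mathbb{P}^m$: the martingale (stochastic-integral) terms vanish in expectation after stopping, the $\phi''$-quadratic variation terms and the cross terms with $f,g,h$ are bounded by the uniform estimates of Lemma \ref{penalization:estimate1}, yielding $\mathbb{E}\mathbb{E}^m[K^{-,n}_T]\le C$ and more precisely $\mathbb{E}\mathbb{E}^m[\phi(Y^n_0 - \tilde Y_0)] + \mathbb{E}\mathbb{E}^m[K^{-,n}_T] \le C$; then (6) since $Y^n \downarrow Y$ with $Y \le U$ (which follows from the limit equation, or directly from the monotone convergence and the fact that $(Y^n - U)^+ \to 0$ in $L^2(dt)$ by Lemma \ref{penalization:estimate1} combined with $n\int(Y^n-U)^+ \le C$), we upgrade $L^2(dt)$-convergence to uniform convergence by going back to \eqref{forwardBSDE:n}, using that $(Y^n-U)^+$ is a semimartingale with bounded-in-$n$ decomposition and applying Burkholder–Davis–Gundy together with the $L^1$-bound on $K^{-,n}_T = n\int(Y^n-U)^+$.

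Concretely for step (6): set $\rho^n_t := (Y^n_t - U_t)^+$; from the structure of \eqref{BSDE:n}, $Y^n - U$ is the sum of a martingale part (the $dW$ and $\overleftarrow{dB}$ integrals, plus the $g*dW$ term which obeys BDG by the Remark after \eqref{divergence:term}), a finite-variation part with total variation bounded in $n$ in $L^1$ (this is where $\mathbb{E}\mathbb{E}^m[K^{+,n}_T] + \mathbb{E}\mathbb{E}^m[K^{-,n}_T] \le C$ enters), so that $\sup_t \rho^n_t$ is bounded in $L^1$; combined with $\mathbb{E}\mathbb{E}^m\int_0^T (\rho^n_s)^2\,ds \le \frac{C}{n} \to 0$ and the monotone decrease $Y^n \downarrow Y \le U$ (hence $\rho^n \downarrow 0$ pointwise $dt\otimes d\mathbb{P}\otimes d\mathbb{P}^m$-a.e.), Dini-type / dominated-convergence reasoning on the continuous decreasing sequence $\sup_{t}\rho^n_t$ forces $\mathbb{E}\mathbb{E}^m\big[(\sup_t \rho^n_t)^2\big] \to 0$. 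Alternatively, one reproves it directly: $(\rho^n_t)^2 \le$ a BDG-controlled martingale term $+$ $2\int_t^T \rho^n_s\,d(\text{fin.var.})_s$, and the latter is $\le 2(\sup_s \rho^n_s)\cdot(\text{total variation})$; a standard $\epsilon$-absorption of $\frac12 \mathbb{E}\mathbb{E}^m[\sup \rho^n]^2$ against $\mathbb{E}\mathbb{E}^m[(\sup \rho^n)(\text{bounded})]$ plus the $L^2(dt)$ smallness closes the estimate.

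The main obstacle I expect is handling the divergence-type term $g_s * dW_s$ and the cross terms $\langle Z^n_s - \tilde Z_s, g_s - \tilde g_s\rangle$ inside the It\^o expansion for $\phi(Y^n-\tilde Y)$ while keeping the $\phi''$-contribution on the correct side; one must use the contraction hypothesis \textbf{(H)(iv)} only indirectly (it is already folded into Lemma \ref{penalization:estimate1}) and rely on the BDG property of $\int g*dW$ (the Remark following the representation \eqref{divergence:term}) to kill its expectation after stopping at $\tau_N$. The second delicate point is the passage from the stopped estimate to the full interval: one needs $\tau_N \uparrow T$ a.s. (which holds since $K^+_T \in L^1$ and $\zeta>0$) together with uniform-in-$N$ bounds so that Fatou/monotone convergence transfers the uniform smallness of $\sup_{t\le \tau_N}\rho^n_t$ to $\sup_{t\le T}\rho^n_t$; this is exactly the point where the stochastic monotone convergence theorem of Peng--Xu (already invoked to produce $K^-$ and the c\`adl\`ag version of $Y$) does the heavy lifting.
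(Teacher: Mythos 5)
Your proposal is correct and follows essentially the same route as the paper: the paper applies It\^o's formula to $\big((Y^n-\tilde Y)^+\big)^2$ (your first option), uses $\tilde Y\geq L$ to annihilate the $dK^{+,n}$ term and $\tilde Y\leq U$ to extract $n\,\mathbb{E}\mathbb{E}^m\int_0^T\big((Y^n_s-U_s)^+\big)^2\,ds\leq C$, deduces $Y_t\leq U_t$ for all $t$ from the c\`adl\`ag property of $Y$ and the continuity of $U$, and concludes by Dini's lemma plus dominated convergence, exactly as in your step (6). The only caveat is that your ``alternative'' semimartingale/BDG argument in step (6) would not go through, since $U_t=\overline v(t,W_t)$ is merely continuous and not a semimartingale (so $(Y^n-U)^+$ has no It\^o decomposition), and the identification $n\int\phi'(Y^n_s-\tilde Y_s)(Y^n_s-U_s)^+ds\approx K^{-,n}_T$ is not justified as stated --- but neither is needed, as the Dini route you give as the primary option is the paper's argument.
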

\begin{proof}  
We first note that for all $n$, $Y^n-  \tilde Y$ is the solution of  the RBSDE  associated to the data $ (\Psi (W_T) - \tilde \Psi (W_T), f^n - \tilde f, g - \tilde g, h - \tilde h, L - \tilde Y)$. By It\^o's formula (see Remark \ref{remarkitoforpositivepart} in the appendix), we have
\begin{equation}
\label{ItoLF}
\begin{split}
&|(Y_0^{n}- \tilde  Y_0)^+|^2 = |(\xi -  \tilde  \xi )^+|^2-\int_0^T{\bf{1}}_{\{ Y^n_s- \tilde Y_s >0\}} |Z_s^{n}-  \tilde Z_s|^2 ds - 2 \int_{0}^{T} (Y_s ^{n} - \tilde  Y_s)^+ (Z_{s}^{n}-  \tilde Z_s)dW_{s} \\&+ 2\int_0^T (Y_s^{n}-  \tilde Y_s)^+\,dK_s^{+,n}+2\int_0^T(Y_s^n-  \tilde Y_s)^+(f_s(W_s)-  \tilde f_s(W_s))ds
-\int_0^T (Y_s ^{n} -  \tilde Y_s)^+ (g_s -  \tilde g_s)*dW_s\\&+2\int_0^T (Y_s ^{n} -  \tilde Y_s)^+ (h_s(W_s) - \tilde  h_s(W_s) )\cdot\overleftarrow{dB}_s+\int_0^T{\bf{1}}_{\{ Y^n_s-  \tilde Y_s >0\}} |h_s(W_s)-  \tilde  h_s(W_s)|^2 ds\\&-2\int_0^T{\bf{1}}_{\{ Y^n_s-  \tilde Y_s >0\}} \langle g_s(W_s) -  \tilde g_s(W_s) ,Z^n_s -  \tilde  Z_s \rangle ds - 2n\int_0^T (Y_s^{n}-  \tilde Y_s)^+ (Y_s^{n}-U_s)^+ds,\ a.s.. 
\end{split} \end{equation}
 Since $ \tilde  Y \geq L$, we have $\int_0^T (Y_s^{n}- \tilde  Y_s)^+\,dK_s^{+,n}\leq 0$.
 Now taking the expectation  in the previous inequality, we easily get that $n  \int_0^T(Y_s^{n}-  \tilde Y_s)^+ (Y_s^{n}-U_s)^+ ds$ is bounded in $L^1$, which yields: 
 $$\lim_{n\rightarrow +\infty}\mathbb{E}\mathbb{E}^m \left[\int_0^T(Y_s^{n}- \tilde  Y_s)^+ (Y_s^{n}-U_s)^+ ds\right]=0.$$
 Hence, since $ \tilde  Y \leq U$,
 $$\lim_{n\rightarrow +\infty}\mathbb{E}\mathbb{E}^m\left [\int_0^T\big ((Y_s^{n}-U_s)^+\big)^2 ds\right]=0.$$
 So we have $\mathbb{E}\mathbb{E}^m [\int_0^T ((Y_s-U_s)^+)^2 ds]=0.$\\
 Since $Y$  is c\`adl\`ag process and $U$ is continuous process, we deduce that $\mathbb{P}\otimes\mathbb{P}^m$-a.e., for all $t\in [0,T]$, $Y_t \leq U_{t}$. Hence $\lim_{n\rightarrow +\infty} (Y^n_{t}-U_{t})^+=0$. 
 By Dini's lemma (\cite{DellacherieMeyerII}, p. 202), we get $\lim_{n\rightarrow +\infty} \sup_{t\in [0,T]}(Y^n_{t}-U_{t})^+=0$, a.s. and we conclude by the dominated convergence theorem.
 \end{proof}

\begin{lemma}\label{boundnessofK} For each $N>0$, there exists a constant $C_N$ such that for all $n\in\N$, $$\mathbb{E}\mathbb{E}^m [(K^{+,n}_{\tau_N})^2]+\mathbb{E}\mathbb{E}^m [(K^{-,n}_{\tau_N})^2]\leq C_N.$$
\end{lemma}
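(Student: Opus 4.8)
The plan is to read off $K^{-,n}_{\tau_N}$ from the stopped forward equation \eqref{forwardBSDE:n} and to bound, uniformly in $n$, the $L^2(\mathbb{P}\otimes\mathbb{P}^m)$-norm of each term that appears, so that the dependence on $N$ enters only through the size of $K^{+,n}_{\tau_N}$. First, since $(K^{+,n})_n$ is non-decreasing with limit the continuous process $K^+$ (Lemma \ref{contK+}) and, by construction of $\tau_N$, $K^+_{\tau_N}\le N\zeta(W_0)$, we get $K^{+,n}_{\tau_N}\le K^+_{\tau_N}\le N\zeta(W_0)$ pathwise, whence
\[
\mathbb{E}\mathbb{E}^m\big[(K^{+,n}_{\tau_N})^2\big]\le N^2\,\mathbb{E}^m[\zeta(W_0)^2]=N^2\|\zeta\|^2 ,
\]
which already settles the $K^{+,n}$-part of the statement.

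Next, taking $t=T$ in \eqref{forwardBSDE:n} and using $K^{-,n}_t=n\int_0^t(Y^n_s-U_s)^+\,ds$, one obtains
\[
K^{-,n}_{\tau_N}=Y^n_{\tau_N}-Y^n_0+\int_0^{\tau_N}f_s(W_s)\,ds-\frac12\int_0^{\tau_N}g_s*dW_s+\int_0^{\tau_N}h_s(W_s)\cdot\overleftarrow{dB}_s-\int_0^{\tau_N}Z^n_s\,dW_s+K^{+,n}_{\tau_N},
\]
and I would bound the seven terms using $(\sum_{i=1}^7 a_i)^2\le 7\sum_{i=1}^7 a_i^2$. Since $\tau_N\le T$, the first six terms are each dominated by the supremum over $[0,T]$ of the corresponding continuous process, so it suffices to control: $\mathbb{E}\mathbb{E}^m[\sup_{[0,T]}|Y^n_t|^2]$ and $\mathbb{E}\mathbb{E}^m[\int_0^T|Z^n_s|^2\,ds]$, bounded uniformly in $n$ by Lemma \ref{penalization:estimate1} (for the $Z^n$-integral one also uses Doob's inequality and the It\^o isometry, $\int_0^\cdot Z^n_s\,dW_s$ being a $\mathcal{G}_t$-martingale); $\mathbb{E}\mathbb{E}^m[(\int_0^T|f_s(W_s)|\,ds)^2]\le T\,\mathbb{E}\mathbb{E}^m[\int_0^T|f^0_s(W_s)|^2\,ds]=T\,\mathbb{E}\|f^0\|_{2,2}^2<\infty$, by Cauchy--Schwarz, invariance of $m$ under the Brownian semigroup, and \textbf{(HD2)}; $\mathbb{E}\mathbb{E}^m[\sup_{[0,T]}|\int_0^t g_s*dW_s|^2]\le C\,\mathbb{E}\|g^0\|_{2,2}^2$ by the Burkholder--Davis--Gundy property of $(\int_0^\cdot g_s*dW_s)$ recalled in the Remark after \eqref{divergence:term}; and $\mathbb{E}\mathbb{E}^m[\sup_{[0,T]}|\int_0^t h_s(W_s)\cdot\overleftarrow{dB}_s|^2]\le C\,\mathbb{E}\|h^0\|_{2,2}^2$, obtained by writing $\int_0^t h_s\cdot\overleftarrow{dB}_s=\int_0^T h_s\cdot\overleftarrow{dB}_s-\int_t^T h_s\cdot\overleftarrow{dB}_s$ and applying the backward BDG inequality to the backward $(\mathcal{F}^B_{t,T})$-martingale $t\mapsto\int_t^T h_s(W_s)\cdot\overleftarrow{dB}_s$. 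Combining these with $K^{+,n}_{\tau_N}\le N\zeta(W_0)$ gives $\mathbb{E}\mathbb{E}^m[(K^{-,n}_{\tau_N})^2]\le C_N$ with $C_N=C(1+N^2\|\zeta\|^2)$ for an $n$-independent $C$.

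The one point requiring care — and the main obstacle — is that $\tau_N$ is only a $\mathcal{G}_t$-stopping time, whereas the processes $\int_0^\cdot g_s*dW_s$ and $\int_0^\cdot h_s(W_s)\cdot\overleftarrow{dB}_s$ are not $\mathcal{G}_t$-(forward) martingales, so optional stopping is not available for them; the resolution is simply not to use it: these processes have continuous paths, we evaluate them at the random time $\tau_N\le T$, and we dominate by their running suprema over the fixed interval $[0,T]$, where the maximal and BDG-type inequalities do hold. For $\int_0^\cdot Z^n_s\,dW_s$ one may argue the same way, or alternatively note that it \emph{is} a genuine $\mathcal{G}_t$-martingale and write $\mathbb{E}\mathbb{E}^m[(\int_0^{\tau_N}Z^n_s\,dW_s)^2]=\mathbb{E}\mathbb{E}^m[\int_0^{\tau_N}|Z^n_s|^2\,ds]\le\mathbb{E}\mathbb{E}^m[\int_0^T|Z^n_s|^2\,ds]$. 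All constants are independent of $n$, so the claimed uniform bound follows.
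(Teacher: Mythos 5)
Your proof is correct and follows exactly the route the paper intends: its own proof is the one-line remark that the claim is an ``obvious consequence of Lemma \ref{penalization:estimate1} and the fact that $K^{+,n}$ is dominated by $K^{+}$'', and your argument simply spells out those two ingredients — the pathwise bound $K^{+,n}_{\tau_N}\le K^{+}_{\tau_N}\le N\zeta(W_0)$, and the uniform $L^2$ control of $K^{+,n}-K^{-,n}$ read off from \eqref{forwardBSDE:n} via the estimates of Lemma \ref{penalization:estimate1} and \textbf{(HD2)}. The extra care you take with the stopped backward/symmetric integrals (dominating by running suprema on $[0,T]$ rather than invoking optional stopping) is sound and fills a detail the paper leaves implicit.
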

\begin{proof} This is an obvious consequence of Lemma \ref{penalization:estimate1} and the fact that $K^{+,n}$ is dominated by $K^+$.
\end{proof}
Since we have the following estimate (see Lemma \ref{penalization:estimate1})
\begin{equation}\label{mainestimate2}
\mathbb{E} \mathbb{E}^m \left(\sup_{0\leq t\leq T} |Y_t^n|^2\right)+  \mathbb{E }\mathbb{E}^m  \int_0^T |Z_s^n|^2ds  \leq C,
\end{equation}
by Fatou's lemma , one gets  $$ \mathbb{E}\mathbb{E}^m \left( \sup_{0 \leq t \leq T}  |Y_t|^2 \right) \leq  C.$$

\subsection{ Convergence of $(Y^n, Z^n, K^{+,n},K^{-,n})$}
 
\begin{lemma}
\label{convergence:YZK} The limiting processes $Y$, $K^+$ and $K^- $ are continuous and
we have:
\begin{equation}
\lim_{n\rightarrow +\infty}\mathbb{E}\mathbb{E}^m  \left[ \sup_{0 \leq t \leq T}  |Y_t^n - Y_t|^2 \right] = 0\,, \end{equation}
and for any $N>0$,
 \begin{equation}
\label{convergence:YZKn}
\lim_{n\rightarrow +\infty}\mathbb{E}\mathbb{E}^m  \left[ \int_0^{\tau_{N}}  |Z_t^n - Z_t|^2  dt \right] = 0\,. \end{equation}
Moreover, we have   $L_t\leq Y_t\leq U_t,\; \forall \, t \in [0,T] $ and   
$$\int_0^T(Y_s-L_s)dK^+_s=\int_{0}^{T}(U_s-Y_s)dK^-_s =  0, \quad \mathbb{P}\otimes \mathbb{P}^m\textbf{-}a.e..$$

\end{lemma}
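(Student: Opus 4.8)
The plan is to transfer the statement to the level of the penalized quadruples $(Y^n,Z^n,K^{+,n},K^{-,n})$, to work on the localizing intervals $[0,\tau_N]$ (on which $\tau_N=T$ for $N$ large, $\mathbb{P}\otimes\mathbb{P}^m$-a.e.) and to pass to the limit using the three facts already available: the Peng--Xu stochastic monotone convergence theorem (which produced the decomposition \eqref{Continu} together with the convergences $Y^n_{\cdot\wedge\tau_N}\to Y_{\cdot\wedge\tau_N}$ and $K^{\pm,n}_{\cdot\wedge\tau_N}\to K^{\pm}_{\cdot\wedge\tau_N}$), the Fundamental Lemma (Lemma \ref{FundamentalLemma}), and the uniform $L^2$-bounds of Lemmas \ref{penalization:estimate1} and \ref{boundnessofK}. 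The structural point I would use throughout is that in the linear case $f,g,h$ do not depend on $(u,\nabla u)$, so those driver terms are common to every $Y^n$ and pass unchanged to \eqref{Continu}; hence they cancel in $Y^n-Y$ and, on $[0,\tau_N]$,
\begin{equation*}
Y^n_{t}-Y_{t}=\left(Y^n_0-Y_0\right)+\int_0^{t}(Z^n_s-Z_s)\,dW_s-\left(K^{+,n}_{t}-K^{+}_{t}\right)+\left(K^{-,n}_{t}-K^{-}_{t}\right),\qquad t\le\tau_N .
\end{equation*}
For continuity: $K^+$ is continuous by Lemma \ref{contK+}; since $Y^n\downarrow Y$ with the $Y^n$ continuous, $Y$ has no positive jumps, while \eqref{Continu} shows that on $[0,\tau_N]$ the only term of $Y$ that can jump is $K^-$, so $\Delta Y_t=\Delta K^-_t\ge0$; hence $\Delta Y_t=0$, $Y$ is continuous, and then \eqref{Continu} makes $K^-$ continuous too. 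With $Y$ continuous, Dini's theorem applied to the monotone sequence $Y^n\downarrow Y$ on $[0,T]$ gives $\sup_{[0,T]}|Y^n_t-Y_t|\to0$ a.e., and since this supremum is dominated by $\sup_t|Y^1_t|+\sup_t|Y_t|\in L^2(\mathbb{P}\otimes\mathbb{P}^m)$ (by \eqref{mainestimate2} and $\mathbb{E}\mathbb{E}^m\sup_t|Y_t|^2\le C$), dominated convergence yields the uniform $L^2$-convergence of $Y^n$.

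Next, the obstacle inequalities and the Skorokhod conditions. That $Y\ge L$ follows from $Y^n\ge L$ and $Y^n\downarrow Y$, and $Y\le U$ follows from the Fundamental Lemma. For the lower condition I would pass to the limit in $\int_0^{\tau_N}(Y^n_s-L_s)\,dK^{+,n}_s=0$, using that $K^+-K^{+,n}$ is increasing (so $dK^{+,n}\le dK^+$ as measures) and $0\le Y^n-L\le Y^1-L$ to get $\mathbb{E}\mathbb{E}^m\int_0^{\tau_N}(Y_s-L_s)\,dK^+_s=\lim_n\mathbb{E}\mathbb{E}^m\int_0^{\tau_N}(Y^n_s-L_s)\,dK^+_s=0$ (a dominated-convergence argument with Lemma \ref{boundnessofK}), then letting $N\to\infty$. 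For the upper condition I would start from $\int_0^{\tau_N}(U_s-Y^n_s)\,dK^{-,n}_s=-n\int_0^{\tau_N}((Y^n_s-U_s)^+)^2\,ds\le0$; writing $U-Y^n=(U-Y)+(Y-Y^n)$, the $(Y-Y^n)$ contribution is $L^1$-negligible (it is bounded by $\sup_t|Y^n_t-Y_t|\,K^{-,n}_{\tau_N}$, whence uniform convergence and Lemma \ref{boundnessofK} apply) and the $(U-Y)$ contribution converges to $\int_0^{\tau_N}(U_s-Y_s)\,dK^-_s$ by continuity of $U-Y$ and $K^{-,n}_{\cdot\wedge\tau_N}\to K^-_{\cdot\wedge\tau_N}$; passing to the limit gives $\mathbb{E}\mathbb{E}^m\int_0^{\tau_N}(U_s-Y_s)\,dK^-_s\le0$, hence $=0$, and $N\to\infty$ finishes.

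For the strong convergence \eqref{convergence:YZKn} I would apply It\^o's formula to $|Y^n-Y|^2$ on $[0,\tau_N]$ via the displayed identity (so that its quadratic variation contributes exactly $\int_0^{\cdot}|Z^n_s-Z_s|^2\,ds$), and take $\mathbb{E}\mathbb{E}^m$, the $dW$-integral being a true martingale after a standard localization, to obtain
\begin{equation*}
\begin{split}
\mathbb{E}\mathbb{E}^m\int_0^{\tau_N}|Z^n_s-Z_s|^2\,ds=\;&\mathbb{E}\mathbb{E}^m|Y^n_{\tau_N}-Y_{\tau_N}|^2-\mathbb{E}\mathbb{E}^m|Y^n_0-Y_0|^2\\
&+2\,\mathbb{E}\mathbb{E}^m\!\int_0^{\tau_N}\!(Y^n_s-Y_s)\,d(K^{+,n}-K^{+})_s-2\,\mathbb{E}\mathbb{E}^m\!\int_0^{\tau_N}\!(Y^n_s-Y_s)\,d(K^{-,n}-K^{-})_s .
\end{split}
\end{equation*}
The first line tends to $0$ by the uniform $L^2$-convergence of $Y^n$; in the second, $Y^n-Y\ge0$ and $d(K^{+,n}-K^+)\le0$ make the $K^+$-term $\le0$, and the last term equals $-2\,\mathbb{E}\mathbb{E}^m\int_0^{\tau_N}(Y^n_s-Y_s)\,dK^{-,n}_s+2\,\mathbb{E}\mathbb{E}^m\int_0^{\tau_N}(Y^n_s-Y_s)\,dK^-_s$, whose first summand is $\le0$ (both factors non-negative) and whose second is $\le2\,\mathbb{E}\mathbb{E}^m[\sup_t|Y^n_t-Y_t|\,K^-_{\tau_N}]\to0$ (uniform convergence of $Y^n$ and $\mathbb{E}\mathbb{E}^m(K^-_{\tau_N})^2\le C_N$, by Fatou and Lemma \ref{boundnessofK}). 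Hence $\limsup_n\mathbb{E}\mathbb{E}^m\int_0^{\tau_N}|Z^n_s-Z_s|^2\,ds\le0$, which is \eqref{convergence:YZKn}.

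The step I expect to be the main obstacle is the interaction of the two increasing processes: a priori $K^-$ is only c\`adl\`ag and $(K^{-,n})_n$ is not monotone in $n$, so $dK^{-,n}$ cannot be manipulated the way $dK^{+,n}$ can. I expect this to be overcome, as above, by the strict separability hypothesis \textbf{(HO)-(iv)} via the auxiliary process $\tilde Y$ (which is precisely what lets the penalization term be controlled in the Fundamental Lemma, through $(Y^n-U)^+\le(Y^n-\tilde Y)^+$ on $\{Y^n>U\}$) together with the Peng--Xu monotone convergence theorem, which at once furnishes the continuous decomposition \eqref{Continu} and the mode of convergence of $K^{-,n}$ needed to pass to the limit in the reflection condition; everything else (the $L^2$-bounds, the Dini arguments, the sign analysis of the cross terms) should be routine.
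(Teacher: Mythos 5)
Your overall route --- It\^o's formula on squared differences, sign analysis of the cross terms through the Skorokhod conditions and the comparison theorem, the Fundamental Lemma to neutralize the $K^{-}$ interaction, then Dini and dominated convergence --- is essentially the paper's route. But there is a genuine gap at the very first step, the continuity of $Y$, and everything downstream leans on it. You argue that a decreasing limit of continuous processes ``has no positive jumps'' and combine this with $\Delta Y_t=\Delta K^-_t\ge 0$ to conclude $\Delta Y_t=0$. The semicontinuity statement is backwards: a decreasing pointwise limit of continuous functions is \emph{upper} semicontinuous, so its c\`adl\`ag version satisfies $Y_{t-}\le Y_t$, i.e. it has no \emph{negative} jumps ($\Delta Y_t\ge0$). (For instance $Y^n_s=\min\left(1,\max\left(0,1-n(t_0-s)\right)\right)$ decreases in $n$ to ${\bf 1}_{[t_0,T]}$, a c\`adl\`ag limit with a positive jump.) So your two facts both assert $\Delta Y_t\ge 0$ and do not combine to give continuity. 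Without continuity of $Y$ you cannot invoke Dini's lemma, hence you do not obtain $\sup_{t}|Y^n_t-Y_t|\to 0$, and the uniform $L^2$-convergence --- which you then use in the cross term $2\,\mathbb{E}\mathbb{E}^m\left[\sup_t|Y^n_t-Y_t|\,K^-_{\tau_N}\right]$ and in both Skorokhod limit passages --- is not established. Note also that as long as $K^-$ is only c\`adl\`ag, It\^o's formula for $|Y^n-Y|^2$ carries the extra jump contribution $\sum_{s}(\Delta K^-_s)^2$, which you have silently dropped.

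The paper closes this by proving the Cauchy property in the uniform norm directly: It\^o's formula applied to $|Y^n_{\cdot\wedge\tau_N}-Y^p_{\cdot\wedge\tau_N}|^2$ for $n\ge p$ (so that $Y^n\le Y^p$ and the $d(K^{-,n}-K^{-,p})$ cross term is dominated by $\sup_s(Y^p_s-U_s)^+\,K^{-,n}_{\tau_N}$, which vanishes by the Fundamental Lemma and Lemma \ref{boundnessofK}), followed by Burkholder--Davis--Gundy, yields $\mathbb{E}\mathbb{E}^m\left[\sup_{t\le\tau_N}|Y^n_t-Y^p_t|^2+\int_0^{\tau_N}|Z^n_t-Z^p_t|^2dt\right]\to0$ as $n,p\to\infty$; continuity of $Y$ then comes for free as a uniform limit of continuous processes, and continuity of $K^-$ follows from \eqref{Continu}. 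A secondary imprecision: the weak convergence of the measures $dK^{-,n}$ toward $dK^-$, which you need to pass to the limit in $\int_0^{\tau_N}(U_s-Y_s)dK^{-,n}_s$, is not directly furnished by Peng--Xu; the paper extracts the uniform convergence of $K^{-,n}$ (along a subsequence) from the identity \eqref{Yn-Yp} only after the uniform convergence of $Y^n$ and $K^{+,n}$ and the $L^2$-convergence of $Z^n$ are in hand. The rest of your argument (the sign of the $K^+$ cross term, the two minimality conditions, the conclusion $\int_0^{\tau_N}(U_s-Y_s)dK^-_s=0$) matches the paper once these points are repaired.
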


\begin{proof} The continuity of process $K^+$ has been proved in Lemma \ref{contK+}.\\
From \eqref{forwardBSDE:n}, we have, for $n\geq p$ and any $t\in[0,T]$,  
\begin{equation}\label{Yn-Yp}
Y^n_{t\wedge\tau_N}-Y^p_{t\wedge\tau_N}=(Y^n_0-Y^p_0)-(K^{+,n}_{t\wedge\tau_N}-K^{+,p}_{t\wedge\tau_N})+(K^{-,n}_{t\wedge\tau_N}-K^{-,p}_{t\wedge\tau_N})+\int_0^{t\wedge\tau_N}(Z^n_s-Z^p_s)dW_s.
\end{equation}
Then, It\^o's formula gives almost surely, 
\begin{equation}
\label{Ito:BSDEnp}
\begin{split}
 |Y_{t\wedge\tau_N}^n -Y_{t\wedge\tau_N}^p|^2 =&\, |Y^n_0-Y^p_0|^2- 2 \int_0^{t\wedge\tau_N} \left(Y_s^n - Y_s^p\right) d\left(K_s^{+,n} - K_s^{+,p}\right)
\\&+ 2\int_0^{t\wedge\tau_N} \left(Y_s^n - Y_s^p\right) d\left(K_s^{-,n} - K_s^{-,p}\right) +\int_0^{t\wedge\tau_N}|Z_s^n-Z_s^p|^2ds
\\&+2\sum\limits_{i}\int_0^{t\wedge\tau_N} \left(Y_s^n - Y_s^p\right) \left(Z_{i,s}^n - Z_{i,s}^p \right)dW_{s}^{i}. 
\end{split}
\end{equation}

Then taking expectation and noting that, as $n\geq p$, due to the comparison theorem (Lemma \ref{comparisonthmlinearRBDSDE}  in the Appendix), we know that $Y^n\leq Y^p$, hence {\red{thanks to the definition of $ K^{-,n}$}}, we get
{\red
\begin{equation}\label{controlthirdterm}
\begin{split}
-2\int_0^{t\wedge\tau_N} \left(Y_s^n - Y_s^p\right) d\left(K_s^{-,n} - K_s^{-,p}\right)
\leq&\, -2\int_0^{t\wedge\tau_N} \left(Y_s^n - Y_s^p\right) dK_s^{-,n}\\=&-2\int_0^{t\wedge\tau_N} \left(Y_s^n - U_s\right) dK_s^{-,n}
+2\int_0^{t\wedge\tau_N} \left(Y_s^p - U_s\right) dK_s^{-,n}\\
\leq&\,2\int_0^{t\wedge\tau_N} \left(Y_s^p - U_s\right) dK_s^{-,n}\\
=&\,2\int_0^{t\wedge\tau_N} \left[(Y_s^p - U_s)^+-(Y^p_s-U_s)^-\right]n(Y_s^n-U_s)^+ds\\
{ \leq}&\,2\int_0^{t\wedge\tau_N} \left(Y_s^p - U_s\right)^+ dK_s^{-,n}\\
\leq&\,2 \sup_{s\in [0,T]}\left(Y_s^p -  U_s\right)^+ { K^{-,n}_{\tau_N}}.
\end{split}\end{equation}

By Lemma \ref{FundamentalLemma}, Cauchy-Schwarz's inequality and the fact that $(K^{-,n}_{\tau_N})_n$ is bounded in $L^2$, we get 
\begin{eqnarray*}
\lim_{n{,p}\rightarrow +\infty}\bbE\bbE^m\left[ \sup_{s\in [0,T]}\left({ Y_s^p} -  U_s\right)^+ {K^{-,n}_{\tau_N}}\right]  =0,\end{eqnarray*}
hence
\begin{eqnarray}\label{controltermsup}
\limsup_{n,p\rightarrow +\infty}\E\E^m \left[ \sup_{t\in [0,T]}\left( -\int_0^{t\wedge\tau_N} \left(Y_s^n - Y_s^p\right) d\left(K_s^{-,n} - K_s^{-,p}\right) \right)\right]  \leq 0.\end{eqnarray}
Moreover, for fixed $N$, the sequence $(Y^n_{t\wedge \tau_N})_{n\in\mathbb{N}}$ is decreasing and bounded in $L^2$ hence converges in $L^2$, so 
$$\lim_{n,p\rightarrow +\infty}\E\E^m\left [ \left|Y^n_{t\wedge \tau_N}-Y^p_{t\wedge \tau_N}\right|^2\right]=0.$$

Finally, remarking the following relation: 
\begin{equation*}\begin{split}
&2\int_0^{t\wedge\tau_N}(Y_s^n-Y_s^p)d(K_s^{+,n}-K_s^{+,p})\\=&\,2\int_0^{t\wedge\tau_N}(Y_s^n-L_s+L_s-Y_s^p)d(K_s^{+,n}-K_s^{+,p})\\=&\,2\int_0^{t\wedge\tau_N}(Y_s^n-L_s)d(K_s^{+,n}-K_s^{+,p})
+2\int_0^{t\wedge\tau_N}(Y_s^p-L_s)d(K_s^{+,p}-K_s^{+,n})\\=&\,2\int_0^{t\wedge\tau_N}(L_s-Y_s^n)dK_s^{+,p}+2\int_0^{t\wedge\tau_N}(L_s-Y_s^p)dK_s^{+,n}\leq 0,
\end{split}\end{equation*}
and coming back to \eqref{Ito:BSDEnp}, we get for any $t\in [0,T]$,
\begin{equation*}\label{convergenceofZnKn+}
0\leq \limsup_{n,p\rightarrow +\infty}\bbE\bbE^m\int_0^{t\wedge\tau_N}|Z_s^n-Z_s^p|^2ds\leq\limsup_{n,p\rightarrow +\infty}-2\bbE\bbE^m\int_0^{t\wedge\tau_N}(Y_s^n-Y_s^p)d(K_s^{-,n}-K_s^{-,p})\leq0. 
\end{equation*}

Finally,    taking supremum over $[0,T]$ in \eqref{Ito:BSDEnp}, thanks to the Burkholder-Davis-Gundy inequality and \eqref{controltermsup}, we have }
\begin{equation}\label{cauchy:YZ}
\mathbb{E}\mathbb{E}^m\left[\sup_{t\in[0,\tau_N]}|Y_t^n-Y_t^p|^2+\int_0^{\tau_N} |Z_t^n - Z_t^p|^2 dt\right] \longrightarrow 0, \quad  \mbox{as} \; \,     n,p \to \infty.\end{equation}

Hence, there exists a pair $(Y, Z)$ of progressively measurable processes with values in $ \mathbb{R}\times\mathbb{R}^d$ such that 
\begin{equation*}
\mathbb{E}\mathbb{E}^m  \left[ \sup_{0 \leq t \leq \tau_N}  |Y_t^n - Y_t|^2+ \int_0^{\tau_N} |Z_t^n - Z_t|^2  dt  \right] \longrightarrow 0.\end{equation*}
Therefore, the process $Y$ admits a continuous version that we still denote by $Y$ and by equality \eqref{Continu} we deduce that the process 
$K^-$ is also continuous. Then as a consequence of Dini's lemma, $\sup_{t\in [0,T]}|Y^n_t -Y_t|$ converges to $0$ a.e., hence by the dominated convergence theorem, we have
$$\lim_{n\rightarrow+\infty}\E\E^m\left [\sup_{t\in [0,T]}|Y^n_t -Y_t|^2\right ]=0.$$\\
Similarly, Dini's lemma and the dominated convergence theorem also yield
$$\E\E^m\left[\sup_{t\in [0,T]}\left|K^{+,n}_t -K_t^+\right|\right]=0.$$
From identity \eqref{Yn-Yp}, making $p$ tend to $+\infty$, we have
\begin{equation*}\begin{split}
\sup_{t\in [0,T]}|K^{-,n}_{t\wedge\tau_N}-K^{-}_{t\wedge\tau_N}|\leq&\,|Y^n_0-Y_0|+\sup_{t\in [0,T]}|Y^n_{t}-Y_{t}|+\sup_{t\in [0,T]}|K^{+,n}_{t}-K^{+}_{t}|\\&+\sup_{t\in [0,T]}\left|\int_0^{t\wedge\tau_N}(Z^n_s-Z_s)dW_s\right|.
\end{split}\end{equation*}
We have proved that each term on the right hand side tends to $0$ in $L^2$ or $L^1$, hence by standard arguments based on a diagonal extraction procedure we can extract a subsequence  $(K^{-,\delta (n)})_n$ such that for all $N$,
$$\lim_{n\rightarrow +\infty} \sup_{t\in [0,T]}|K^{-,\delta (n)}_{t\wedge\tau_N}-K^{-}_{t\wedge\tau_N}|=0,\ \ \makebox{a.e.}.$$
Hence, since $\tau_N =T$ a.e. for $N$ large enough, we obtain
$$\lim_{n\rightarrow +\infty} \sup_{t\in [0,T]}|K^{-,\delta (n)}_{t}-K^{-}_{t}|=0,\ \ \makebox{a.e.}.$$

Moreover, from  Lemma \ref{FundamentalLemma} we know that $\mathbb{P}\otimes \mathbb{P}^m \mathrm{-} a.e.$,
$Y_t\leq U_t,\ \forall \, t \in [0,T]$, which yields that $\int_{0}^{T}(Y_s-U_s)dK_s^- \leq 0$. But, we also have
\begin{equation*}\begin{split}
\left |\int_0^T(Y_s-U_s)dK_s^- -\int_0^T(Y_s^n-U_s)dK_s^{-,n}\right|\leq&\,\left|\int_0^T (Y_s-U_s)dK_s^- -\int_0^T (Y_s-U_s)dK_s^{-,n}\right| \\
 &+\int_0^T |Y_s -Y^n_s |dK_s^{-,n}.
\end{split} \end{equation*}
 Now since $K^{-,\delta (n)}$ tends to $K^-$, we deduce that almost surely, the sequence  $(dK^{-,\delta (n)})_n$ of measures on $[0,T]$ converges weakly to $dK^-$. Since $s\rightarrow Y_s -U_s$ is continuous, we have 
$$\lim_{n\rightarrow +\infty}\int_0^T(Y_s-U_s)dK_s^{-,\delta (n)}=\int_0^T(Y_s-U_s)dK_s^-.$$
Then, for all $n,N>0$,
$$\int_0^{ \tau_N} |Y_s -Y^{\delta (n)}_s |dK_s^{-,\delta (n)}\leq \sup_{t\in [0,T]} |Y_t -Y^{\delta (n)}_t |K_{\tau_N}^{-,{\delta (n)}}.$$
By Cauchy-Schwarz's inequality and Lemma \ref{boundnessofK}, $\lim_{n\rightarrow +\infty}\E\E^m [\sup_{t\in [0,T]} |Y_t -Y^n_t |K_{\tau_N}^{-,n}]=0$, so by extracting another subsequence if necessary we have
$$\lim_{n\rightarrow +\infty}\int_0^{ \tau_N} |Y_s -Y^{\delta (n)}_s |dK_s^{-,\delta (n)}=0,\ \ \makebox{ a.e.,}$$
this yields
\begin{equation*}
\int_0^{\tau_N}(Y_s-U_s)dK_s^-=\lim_{n\rightarrow +\infty}\int_0^{\tau_N}(Y_s^{\delta (n)}-U_s)dK_s^{-,\delta (n)}=\lim_{n\rightarrow +\infty}\int_0^{\tau_N}\delta (n)((Y_s^{\delta (n)}-U_s)^+)^2ds\geq 0. 
\end{equation*}
Hence, $\int_{0}^{\tau_N}(Y_s-U_s)dK_s^- = 0$. Using similar arguments, we prove
\begin{equation*}
\int_0^{\tau_N}(Y_s-L_s)dK_s^+=\lim_{n\rightarrow +\infty}\int_0^{\tau_N}(Y_s^n-L_s)dK_s^{+,n}=0. 
\end{equation*}

Finally, as $\tau_N=T$ almost surely for $N$ large enough, we conclude that 
$$\int_0^T(Y_s-L_s)dK^+_s=\int_{0}^{T}(U_s-Y_s)dK^-_s =  0, \quad \mathbb{P}\otimes \mathbb{P}^m\textbf{-}a.e..$$
\end{proof}

As a consequence of the  last proof, by passing to the limit in \eqref{forwardBSDE:n} a.e., we obtain the following generalization of the DRBSDE introduced in \cite{EKPPQ}:
\begin{corollary}
\label{RBDSDE:definition}
 The limiting quadruple of processes $\left(Y_t, Z_t, K_t^+,K_t^- \right)_{t \in [0,T]}$ is a solution of the  following DRBDSDE:
\begin{equation}
\begin{split}
Y_{t}& = \Psi\left( W_T\right) +\int_{t}^{T}f_r
\left(W_{r}\right) dr -\frac{1}{2}\int_{t}^{T}g_r *dW_r  + \int_{t}^{T}h_r
\left(W_{r}\right)\cdot\overleftarrow{dB}_r \\&\quad\quad -\sum\limits_{i}\int_{t}^{T}Z_{i,r}dW_{r}^{i}+ K_T^+ - K_t ^+-K_T^-+K_t^- \,
\end{split}
\end{equation}
with $L_t\leq Y_t\leq U_t,\; \forall \, t \in [0,T] $,  $\left(K_t^+\right)_{t \in[0,T]}$ and $\left(K_t^-\right)_{t \in[0,T]}$ are increasing continuous processes and    \begin{equation}
\label{reflection:minimum}
\displaystyle \int_{0}^{T}(Y_t-L_t)dK^+_t = \int_0^T(U_t-Y_t) dK_t^-= 0.
\end{equation}
\end{corollary}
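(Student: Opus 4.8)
The plan is to read off the stated DRBDSDE from equation \eqref{Continu} together with the properties of the limiting processes established in Lemmas \ref{contK+}, \ref{FundamentalLemma} and \ref{convergence:YZK}. Recall that \eqref{Continu} already records the limit of the forward identity \eqref{forwardBSDE:n} stopped at the times $\tau_N$; it is obtained by a term-by-term passage to the limit in which the integrals $\int_0^{t\wedge\tau_N}f_s(W_s)\,ds$, $\int_0^{t\wedge\tau_N}g_s*dW_s$ and $\int_0^{t\wedge\tau_N}h_s(W_s)\cdot\overleftarrow{dB}_s$ are independent of $n$, where $Y^n_0\to Y_0$ and $K^{+,n}_{\cdot\wedge\tau_N}\to K^+_{\cdot\wedge\tau_N}$, where $K^{-,n}_{\cdot\wedge\tau_N}=n\int_0^{\cdot\wedge\tau_N}(Y^n_s-U_s)^+\,ds\to K^-_{\cdot\wedge\tau_N}$ by the Peng--Xu stochastic monotone convergence theorem, and where $\int_0^{\cdot\wedge\tau_N}Z^n_s\,dW_s\to\int_0^{\cdot\wedge\tau_N}Z_s\,dW_s$ by the It\^o isometry thanks to the strong convergence \eqref{convergence:YZKn}.

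The remaining step is to let $N\to\infty$. As observed just before \eqref{forwardBSDE:n}, since $\mathbb{E}\mathbb{E}^m[K^+_T]<\infty$ and $\zeta>0$ one has $\tau_N=T$, $\mathbb{P}\otimes\mathbb{P}^m$-a.e., for $N$ large enough; hence on a set of full measure \eqref{Continu} holds on all of $[0,T]$. Rearranging this forward identity into backward form and using $Y_T=\lim_n Y^n_T=\Psi(W_T)$ gives exactly the displayed equation. The accompanying constraints are already in hand: $L_t\le Y_t\le U_t$ for every $t\in[0,T]$ follows from Lemma \ref{FundamentalLemma} and Lemma \ref{convergence:YZK}; $K^+$ is continuous by Lemma \ref{contK+} and $Y$, $K^-$ by Lemma \ref{convergence:YZK}; each $K^{+,n}$ is increasing with $K^{+,n}_0=0$ and each $K^{-,n}\ge0$ with $K^{-,n}_0=0$, so the limits are increasing and null at $0$; and the minimality relations $\int_0^T(Y_s-L_s)\,dK^+_s=\int_0^T(U_s-Y_s)\,dK^-_s=0$ are proved in Lemma \ref{convergence:YZK}. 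Finally $Z\in L^2(\Omega\times\Omega'\times[0,T])$ since $(Z^n)$ is bounded there by \eqref{mainestimate2}.

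The subtle points of this argument have all been settled in the preceding lemmas, so what is left here is essentially bookkeeping; if there is a place to be careful it is the role of the localization by $\tau_N$. Over the full interval $[0,T]$ one only knows that $Z^n$ converges weakly to $Z$ in $L^2$, which is not enough to pass to the limit in $\int_0^\cdot Z^n_s\,dW_s$, whereas \eqref{convergence:YZKn} supplies the strong $L^2$-convergence on $[0,\tau_N]$ that the It\^o isometry requires. A secondary remark is that, as in the proof of Lemma \ref{convergence:YZK}, the a.e.\ convergence of $K^{-,n}$ (and of $Z^n$) is available only along a subsequence; but $Y$ and $K^+$ are limits of the full sequences, and the stopped identity then determines $K^-$ and $\int Z\,dW$, hence $Z$, uniquely, so the equation is satisfied by the original limiting quadruple.
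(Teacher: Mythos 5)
Your proposal is correct and follows essentially the same route as the paper, which justifies the corollary in one line as "a consequence of the last proof, by passing to the limit in \eqref{forwardBSDE:n} a.e."; you simply make explicit the bookkeeping (the stopped identity \eqref{Continu}, the fact that $\tau_N=T$ a.e. for $N$ large, the constraints and minimality relations from Lemmas \ref{contK+}, \ref{FundamentalLemma} and \ref{convergence:YZK}). Your remarks on the localization by $\tau_N$ and on the subsequence issue for $K^{-,n}$ and $Z^n$ are accurate and consistent with how the paper handles these points.
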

\subsection{ End of the proof of Theorem \ref{maintheorem} in the linear case} 
At this stage, we have built the solution of the DRBDSDE associated to our DOSPDE. It remains to make the link with the solution of this DOSPDE.\\
We keep all the notations of the preceding section.
\begin{lemma}There exists $u\in \mathcal{H}_T$ which admits a quasi-continuous version that we still denote by $u$ such that
$$\forall t\in [0,T],\, Y_t =u(t,W_t )\ \makebox{ and }\ Z_t =\nabla u (t,W_t ),\ \mathbb{P}\otimes\mathbb{P}^m-\makebox{a.e.}.$$
\end{lemma}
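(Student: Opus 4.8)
The plan is to realize $u$ as the limit of the penalized solutions $u^n$ and to exploit the dictionary between the $\mathcal{H}_T$-norms and the BDSDE-norms coming from the fact that $W_t$ has law $m$ under $\mathbb{P}^m$: for $\varphi\in L^2(\mathbb{R}^d)$ one has $\|\varphi\|^2=\mathbb{E}^m[\varphi(W_t)^2]$, and more generally $\Theta:v\mapsto v(\cdot,W_\cdot)$ is an isometry of $L^2(\Omega\times[0,T]\times\mathbb{R}^d)$ into $L^2(\Omega\times\Omega'\times[0,T],\mathbb{P}\otimes\mathbb{P}^m\otimes dt)$, with $\Theta u^n=Y^n$ and $\Theta(\nabla u^n)=Z^n$. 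First I would rewrite the a priori bound of Lemma~\ref{penalization:estimate1} as $\sup_n\big(\mathbb{E}\sup_t\|u^n_t\|^2+\mathbb{E}\int_0^T\|\nabla u^n_t\|^2\,dt\big)<\infty$, so that $(u^n)$ is bounded in $\mathcal{H}_T$, hence $(\nabla u^n)$ is weakly relatively compact in $L^2(\Omega\times[0,T]\times\mathbb{R}^d)$. From $\mathbb{E}^m|Y^n_t-Y^p_t|^2=\|u^n_t-u^p_t\|^2$ and the $L^2$-convergence of $(Y^n)$ proved in the previous subsection, $(u^n)$ is Cauchy in $L^2(\Omega\times[0,T]\times\mathbb{R}^d)$; let $u$ be its strong limit. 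Then $u\in L^2(\Omega\times[0,T];H^1(\mathbb{R}^d))$ with $\nabla u^n\rightharpoonup\nabla u$ (the weak limit of any subsequence is forced to be $\nabla u$), $u$ is $(\mathcal{F}^B_{t,T})$-adapted since adaptedness is preserved by the $L^2$-limit, and by Fatou $\mathbb{E}\sup_t\|u_t\|^2+\mathbb{E}\int_0^T\|\nabla u_t\|^2\,dt<\infty$. Applying $\Theta$ to $u^n\to u$ (strongly) and to $\nabla u^n\rightharpoonup\nabla u$ (weakly), and comparing with $Y^n\to Y$ and $Z^n\rightharpoonup Z$ by uniqueness of limits, we get $Y_t=u(t,W_t)$ and $Z_t=\nabla u(t,W_t)$, $\mathbb{P}\otimes\mathbb{P}^m\otimes dt$-a.e.

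It remains to produce a quasi-continuous version of $u$ and to upgrade the first identity to every $t$. Here I would use that $(u^n)$ is Cauchy ``in capacity'': each $u^n$ is quasi-continuous by the Matoussi--Stoica one-obstacle theorem (\cite{MatoussiStoica}), hence so is $u^n-u^p$, and by Lemma~\ref{convergence:YZK} the process $(u^n-u^p)(t,W_t)=Y^n_t-Y^p_t$ satisfies $\mathbb{E}\mathbb{E}^m\sup_{t}|Y^n_t-Y^p_t|^2\to0$. Combining this with the description of polar sets from Section~\ref{measure} ($B$ polar $\iff\mathbb{P}^m(\exists t:(t,W_t)\in B)=0$) and a Chebyshev-type inequality in capacity, $\mathbb{P}\otimes\mathbb{P}^m\big(\exists t:|u^n_t(W_t)-u^p_t(W_t)|>\varepsilon\big)\to0$; extracting a fast enough converging subsequence and applying Borel--Cantelli, $(u^{n_k})$ converges uniformly on $[0,T]\times\mathbb{R}^d$ off a predictable random set of arbitrarily small capacity, and the limit $\tilde u$ is a quasi-continuous function equal to $u$ $dt\,dx$-a.e.; we then take $u=\tilde u$. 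Since $\tilde u$ is quasi-continuous, $(\tilde u_t(W_t))_{t\in[0,T]}$ is continuous, it equals the continuous process $Y$ for a.e.\ $t$, hence for all $t$, giving $Y_t=\tilde u(t,W_t)$ for every $t$, $\mathbb{P}\otimes\mathbb{P}^m$-a.e.; and the uniform convergence together with the $H^1$-bound shows $\tilde u(\cdot,\omega,\cdot)\in\widetilde{F}$ a.s., so $u\in\mathcal{H}_T$.

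The delicate point is the second paragraph: making rigorous the passage from $\mathbb{P}\otimes\mathbb{P}^m\big(\sup_t|Y^n_t-Y^p_t|>\varepsilon\big)$ to a capacity estimate for $\{|u^n-u^p|>\varepsilon\}$, and extracting a quasi-uniformly convergent subsequence whose limit is genuinely quasi-continuous and in $\widetilde{F}$. An alternative, also requiring care with integrability, is to write $u=z+v^+-v^-$ $dt\,dx$-a.e., where $z$ is the quasi-continuous solution of the linear SPDE with data $(\Psi,f,g,h)$ from Proposition~\ref{quasicontinuite:EDPS} and $v^\pm$ are the random extended regular potentials whose associated additive functionals are $K^\pm$, obtained by passing to the limit (after localizing by $\tau_N$ to remain in $L^2$) in the potential representations of $K^{\pm,n}$; quasi-continuity of $u$ then follows from that of $z$ and $v^\pm$, and $\widetilde{F}$-membership from the corresponding property of $z,v^+,v^-$.
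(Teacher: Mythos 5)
Your proof is correct, and it reaches the conclusion by a genuinely different device than the paper on the one nontrivial step. The paper also starts from the quasi-continuous $u^n$ given by \cite{MatoussiStoica} and the isometry dictionary $\|\varphi\|^2=\mathbb{E}^m[\varphi(W_t)^2]$, but it handles the gradient by Mazur's lemma: since $(Z^n)$ is bounded in $L^2$ and converges weakly to $Z$, one picks convex combinations $\tilde Z^n=\sum_{i\in I_n}\alpha^n_i Z^i$ converging \emph{strongly} to $Z$, forms the matching combinations $\tilde Y^n$, $\tilde u^n$, and reads off from $\sup_t\|\tilde u^n_t-\tilde u^p_t\|^2+\int_0^T\|\nabla\tilde u^n_t-\nabla\tilde u^p_t\|^2dt\leq\mathbb{E}^m[\sup_t|\tilde Y^n_t-\tilde Y^p_t|^2+\int_0^T|\tilde Z^n_t-\tilde Z^p_t|^2dt]$ that $(\tilde u^n)$ is Cauchy in $\mathcal{H}_T$; the limit $u$ then lands in $\mathcal{H}_T$ with all its membership properties at once, and the identification $Y_t=u(t,W_t)$, $Z_t=\nabla u(t,W_t)$ plus quasi-continuity is left as ``obvious.'' You instead keep the original sequence, get $u$ as the strong $L^2$-limit of $(u^n)$ (legitimate, since Lemma \ref{convergence:YZK} gives $\mathbb{E}\mathbb{E}^m\sup_t|Y^n_t-Y_t|^2\to0$), identify $\nabla u$ as the weak limit of $(\nabla u^n)$ using the bound of Lemma \ref{penalization:estimate1}, and transport this through the (weak-weak continuous) isometry $\Theta$ to match $Z$. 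That is equally valid; what Mazur's lemma buys the paper is that no separate Fatou/lower-semicontinuity argument is needed for the $\mathcal{H}_T$-bounds and no separate argument for trajectory regularity, at the price of working with convex combinations. Your second paragraph (quasi-uniform convergence of a fast subsequence via a Chebyshev-in-capacity estimate on $\mathbb{P}\otimes\mathbb{P}^m(\exists t:|u^n_t(W_t)-u^p_t(W_t)|>\varepsilon)$ plus Borel--Cantelli) is precisely the standard argument hidden behind the paper's ``the end of the proof is then obvious,'' and is the same mechanism used to prove Proposition \ref{quasicontinuite:EDPS}; you are right to flag it as the only delicate point, and your fallback decomposition $u=z+v^+-v^-$ through the potentials associated with $K^{\pm}$ is a workable alternative as well.
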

\begin{proof}
First of all, as a consequence of \cite{MatoussiStoica}, for each $n\in\N$ there exists $u^n\in\mathcal{H}_T$, quasi-continuous, such that 
$$\forall t\in [0,T],\, Y^n_t =u^n(t,W_t )\makebox{ and }Z^n_t =\nabla u^n (t,W_t ),\ \mathbb{P}\otimes\mathbb{P}^m-\makebox{a.e.}.$$
Since the sequence $(Z^n)$ is bounded in $L^2$, {\blue by Mazur's lemma}, we can construct a sequence of convex combination
$$\tilde{Z}^n:=\sum_{i\in I_n}\alpha^n_i Z^i,$$
which converges to $Z$ in $L^2 ([0,T]\times\Omega\times\Omega')$.
We put 
$$\tilde{Y}^n :=\sum_{I_n}\alpha^n_i Y^i\ \ \makebox{and}\ \ \tilde{u}^n :=\sum_{I_n}\alpha^n_i u^i,$$
then we clearly have
\begin{equation*}
\sup_{t\in [0,T]}  \| \tilde{u}_t^n - \tilde{u}_t^p \|^2 + \int_0^T\|\nabla  \tilde{u}_t^n - \nabla \tilde{u}_t^p\|^2\,dt \leq \mathbb{E}^m \left[\sup_{t\in [0,T]} | \tilde{Y}_t^n - \tilde{Y}_t^p|^2 + \int_0^T| \tilde{Z}_t^n - \tilde{Z}_t^p|^2\,dt\right],
\end{equation*}
so the sequence $\big(\tilde{u}^n\big)_{n\in \mathbb{N}}$ is a Cauchy sequence in 
$\mathcal{H}_T$ and hence has a limit $u$ in this space.
The end of the proof is then obvious.
\end{proof}

Finally, we have the desired result:
\begin{lemma} The triple $(u,\nu^+ ,\nu^-)$, where 
$$\forall \varphi \in C_b ([0,T]\times \R^d),\ \int_0^T \int_{\mathbb{R}^d} \varphi (t,x) \, \nu^+\big(dt,dx\big) = \E^m \int_0^T \varphi_t(W_t) \,dK_t ^+$$
and
$$\forall \varphi \in C_b ([0,T]\times \R^d),\ 
\int_0^T \int_{\mathbb{R}^d} \varphi (t,x) \, \nu^-\big(dt,dx\big) = \E^m \int_0^T \varphi_t(W_t) \,dK_t ^- ,$$  
is a solution of the linear SPDE with two obstacles \eqref{OSPDElinear} .
\end{lemma}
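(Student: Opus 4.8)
The plan is to verify, one by one, the four requirements of Definition~\ref{o-spde} for the triple $(u,\nu^+,\nu^-)$, transferring everything we already know about the DRBDSDE solution $(Y,Z,K^+,K^-)$ to the level of functions on $[0,T]\times\R^d$ and passing to the limit in the weak formulation satisfied by the penalized solutions $u^n$. Condition (i) is almost immediate: the preceding lemma gives $u\in\mathcal H_T$ with a quasi-continuous version and $Y_t=u(t,W_t)$, $Z_t=\nabla u(t,W_t)$, while Lemma~\ref{convergence:YZK} gives $L_t\le Y_t\le U_t$ for all $t$ and $Y_T=\Psi(W_T)$, all $\mathbb P\otimes\mathbb P^m$-a.e. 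Since $W_t$ has law $m$ under $\mathbb P^m$ for each fixed $t$, applying Fubini to the nonnegative processes $(Y_t-L_t)^-$ and $(U_t-Y_t)^-$ turns these pointwise relations into $\underline v\le u\le\overline v$ $\,d\mathbb P\otimes dt\otimes dx$-a.e., and likewise $u(T,\cdot)=\Psi$ $\,d\mathbb P\otimes dx$-a.e.

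For condition (ii) I would check that $\nu^+$ and $\nu^-$ fit the definition of a random extended regular measure. For $\nu^+$ this is built into the construction: $K^+_0=0$, $\mathbb E\mathbb E^m[K^+_T]<\infty$, and $K^+$ is, $\mathbb P\otimes\mathbb P^m$-a.e., the uniform limit on $[0,T]$ of the processes $K^{+,n}$, each of which is (by \cite{MatoussiStoica}) the continuous additive functional associated with the random regular measure $\nu^{+,n}$ attached to the one-obstacle solution $u^n$. For $\nu^-$ one first observes that $K^{-,n}_t=\int_0^t n\,(u^n_s(W_s)-\overline v_s(W_s))^+\,ds$ is, for a.e.\ fixed $\omega$, the additive functional (in the sense of Theorem~\ref{potentielregulier}) of the deterministic-in-$\omega$ regular potential solving $\partial_t w+\tfrac12\Delta w+n(u^n-\overline v)^+=0$, $w_T=0$, so $\nu^{-,n}$ is a random regular measure; since $K^-$ is the uniform a.e.\ limit of the subsequence $K^{-,\delta(n)}$ produced in Lemma~\ref{convergence:YZK} and $\mathbb E\mathbb E^m[K^-_T]<\infty$, the measure $\nu^-$ is a random extended regular measure.

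The core is condition (iii). Since $u^n$ solves the one-obstacle SPDE~\eqref{SPDE:n} with driver $f^n=f-n(u^n-\overline v)^+$, \cite{MatoussiStoica} provides, for every $\varphi\in\mathcal D_T$ and $t\in[0,T]$, the weak identity for $u^n$ with reflection measure $\nu^{+,n}$; rewriting $n\big((u^n_s-\overline v_s)^+,\varphi_s\big)=\int_{\R^d}\varphi_s(x)\,n(u^n_s(x)-\overline v_s(x))^+\,dx$ as a $\nu^{-,n}$-integral shows this is exactly \eqref{weak:RSPDE} for the triple $(u^n,\nu^{+,n},\nu^{-,n})$. I would then let $n\to\infty$ along the subsequence $\delta(n)$ of Lemma~\ref{convergence:YZK}: the terms $(u^n_s,\partial_s\varphi_s)$, $(\nabla u^n_s,\nabla\varphi_s)$ and $(u^n_t,\varphi_t)$ converge a.s.\ because $\sup_s\|u^n_s-u_s\|+\int_0^T\|\nabla u^n_s-\nabla u_s\|^2\,ds\to0$ a.s.\ along $\delta(n)$ (from the $\mathcal H_T$-convergence $u^n\to u$), while in the linear case the $f$-, $g$- and $\overleftarrow{dB}$-terms do not involve $u^n$ and stay fixed. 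For the two measure terms I would use that, for fixed $\omega$, $\int_t^T\!\int_{\R^d}\varphi_s(x)\,\nu^{\pm,n}(ds,dx)=\mathbb E^m\!\int_t^T\varphi_s(W_s)\,dK^{\pm,n}_s$, and pass to the limit by monotone convergence for $\nu^+$ (using $K^{+,n}\uparrow K^+$), and for $\nu^-$ by the pathwise uniform convergence $K^{-,\delta(n)}\to K^-$ first on $[0,\tau_N]$, where $K^{-,\delta(n)}_{\tau_N}$ is $L^2$-bounded by Lemma~\ref{boundnessofK}, and then letting $N\to\infty$ using that $\tau_N=T$ eventually (modulo a further subsequence extraction). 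Collecting all limits yields \eqref{weak:RSPDE} for $(u,\nu^+,\nu^-)$. Finally, for condition (iv): as $\tilde u$ is quasi-continuous, $\tilde u_s(W_s)=Y_s$ is the continuous version of $u(s,W_s)$, so the defining formula for $\nu^+$ gives $\int_0^T\!\int_{\R^d}(\tilde u_s-\underline v_s)\,\nu^+(ds,dx)=\mathbb E^m\!\int_0^T(Y_s-L_s)\,dK^+_s=0$ by the minimal-reflection condition~\eqref{reflection:minimum}, and symmetrically $\int_0^T\!\int_{\R^d}(\overline v_s-\tilde u_s)\,\nu^-(ds,dx)=\mathbb E^m\!\int_0^T(U_s-Y_s)\,dK^-_s=0$.

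The delicate point is the convergence of the reflection-measure terms, and in particular the $\nu^-$ one: unlike the $\nu^{+,n}$, the measures $\nu^{-,n}$ are not monotone in $n$, so one only has control of $K^{-,n}$ along the subsequence $\delta(n)$ and must dominate the $\mathbb E^m$-average uniformly in $n$ in order to interchange limit and expectation. The stopping times $\tau_N$ together with the uniform bounds of Lemma~\ref{penalization:estimate1} and Lemma~\ref{boundnessofK} are exactly what make this step work; everything else is a routine passage to the limit using the $\mathcal H_T$-convergence already established.
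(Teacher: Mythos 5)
Your overall strategy --- verify the four clauses of Definition~\ref{o-spde} by passing to the limit in the weak formulation of the penalized one-obstacle problems \eqref{SPDE:n} --- is genuinely different from the paper's. The paper never passes to the limit in the penalized \emph{weak formulations} at all: it splits $u=u^1+u^2$, where $u^1$ solves the non-reflected linear SPDE, observes that $(Y^2,Z^2,K^+,K^-)=(Y-Y^1,Z-Z^1,K^+,K^-)$ solves a driftless reflected equation, applies It\^o's product formula to $\varphi_t(W_t)Y^2_t$ for $\varphi\in\mathcal D_T$ and takes $\E^m$-expectation to read off \eqref{weak:RSPDE} directly from the already-constructed limit quadruple of Corollary~\ref{RBDSDE:definition}. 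This sidesteps every interchange of $\lim_n$ with $\E^m$. Your clauses (i), (ii) and (iv) are fine and essentially what the paper leaves implicit (``it is now easy to conclude''); in particular the identification of $K^{-,n}$ as the additive functional of the regular potential with source $n(u^n-\overline v)^+$ is correct.

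The gap is in your treatment of clause (iii), on two counts. First, you invoke $\sup_s\|u^n_s-u_s\|+\int_0^T\|\nabla u^n_s-\nabla u_s\|^2\,ds\to0$ a.s.; but the paper only establishes convergence of $Z^n$ to $Z$ in $L^2$ \emph{up to the stopping times} $\tau_N$ (which depend on $\omega'$) and otherwise only weak convergence in $L^2(\Omega\times\Omega'\times[0,T])$ --- this is exactly why the preceding lemma needs Mazur's lemma, and the strong $\mathcal H_T$-convergence of $u^n$ itself is not available. (This is repairable: the term $(\nabla u^n_s,\nabla\varphi_s)$ is linear, so weak convergence suffices once the remaining terms are known to converge strongly.) Second, and more seriously, your mechanism for the $\nu^{-,n}$-term does not close. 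The quantity to control is $\E^m\int_t^T\varphi_s(W_s)\,dK^{-,\delta(n)}_s$; pathwise weak convergence of the measures $dK^{-,\delta(n)}$ gives a.e.\ convergence of the inner integral, but interchanging with $\E^m$ requires uniform integrability of $\bigl(K^{-,\delta(n)}_T\bigr)_n$, which neither the global $L^1$-bound nor the $L^2$-bounds of Lemma~\ref{boundnessofK} at $\tau_N$ provide: splitting at $\tau_N$ leaves the tail $\E^m\bigl[{\bf 1}_{\{\tau_N<T\}}(K^{-,n}_T-K^{-,n}_{\tau_N})\bigr]$, and ``$\tau_N=T$ eventually'' does not make this small uniformly in $n$. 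One can in fact rescue the step (e.g.\ $\E\E^m[K^{-,n}_T]\to\E\E^m[K^-_T]$ follows from the BSDE at $t=0$ together with monotone convergence of $K^{+,n}_T$, whence Scheff\'e's lemma yields the missing uniform integrability), but that argument is not in your proposal, and without it the key identity \eqref{weak:RSPDE} is not established. The paper's decomposition avoids the issue entirely, which is presumably why it is the route taken there.
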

\begin{proof} Let $(Y^1 ,Z^1)$ be the solution of the backward doubly SDE without obstacle
\begin{equation*}
\begin{split}
Y_{t}^1& = \Psi\left( W_T\right) +\int_{t}^{T}f_r
\left(W_{r}\right) dr -\frac{1}{2}\int_{t}^{T}g_r *dW_r  + \int_{t}^{T}h_r
\left(W_{r}\right)\cdot  \overleftarrow{dB}_r  -\sum\limits_{i}\int_{t}^{T}Z_{i,r}^1dW_{r}^{i}\,,
\end{split}
\end{equation*}
then we know that $Y^1_t=u^1(t,W_t)$, $Z^1_t =\nabla u^1(t,W_t)$, where $u^1\in\mathcal{H}_T$ is quasi-continuous and the solution of the SPDE:
$$d u^1 (t,x) +\frac{1}{2} \Delta u^1 (t,x)dt  + f_t (x)dt+ \mbox{div} g_t (x)dt +h_t (x)\cdot\overleftarrow{dB}_t =0$$
 with terminal condition $u^1_T =\Psi.$\\
 We put $Y^2=Y-Y^1$, $Z^2=Z-Z^1$, then $(Y^2 ,Z^2 ,K^+ ,K^-)$ is a solution of the following backward doubly stochastic SDE with lower obstacle 
 $L_t -Y^1_t$ and upper obstacle $U_t -Y^1_t$:
 $$Y^2_{t} =  -\sum\limits_{i}\int_{t}^{T}Z^2_{i,r}dW_{r}^{i}+ K_T^+ - K_t ^+-K_T^-+K_t^- .$$
 Now, we put $u^2 =u-u^1$, then $u^2 \in\mathcal{H}_T$ is quasi-continuous, and moreover,
 $$ Y^2_t =u^2 (t,W_t )\makebox{ and }Z^2_t=\nabla u^2 (t,W_t ).$$
 Let $\varphi\in \mathcal{D}_T$, then by It\^o's formula, we have for all $t\in [0,T]$:
 \begin{equation*}\begin{split}
 \varphi_t(W_t )Y^2_t =&\,-\int_t^T \left( Y^2_s\nabla \varphi_s(W_s) +\varphi_s(W_s)Z^2_s\right)dW_s -\int_t^T \partial_s \varphi_s(W_s)Y^2_sds-\frac12 \int_t^T \Delta \varphi _s(W_s )Y^2_sds\\& -\int_t^T \nabla \varphi_s(W_s)\cdot Z^2_s ds+\int_t^T \varphi_s(W_s)\, dK^{+}_s -\int_t^T \varphi_s(W_s )\, dK^-_s,
\end{split}\end{equation*}
then taking expectation w.r.t. $\E^m$ and remarking that for example
$$ \E^m \int_t^T \Delta \varphi_s (W_s )Y^2_s\,ds = \int_t^T \left( \Delta \varphi_s ,u^2_s \right)ds=-\int_t^T \left( \nabla \varphi_s ,\nabla u^2_s \right)ds,$$
we get
\begin{equation*}
\begin{split}
& \int_{t}^{T } \left[\left( u^2_{s},\partial_{s}\varphi_s \right) +
\frac{1}{2} \left( \nabla u^2_{s},\nabla \varphi_s \right)  \right] ds+\big( u^2_t, \varphi_t \big) =
 \int_{t}^{T}\int_{\mathbb{R}^d}\varphi_s(x)\, (\nu^+ -\nu^-) (ds,dx),\ \ a.s..\\
\end{split}
\end{equation*}
This proves that $(u^2 ,\nu^+ ,\nu^- )$ solves 
\begin{equation*}
 \left\{\begin{split}& d u^2 (t,x) +
 \;  \frac{1}{2} \Delta u^2 (t,x)dt  + \nu^+ (dt,x)-\nu^- (dt,x)=0,\\
 &\underline{v}(t,x)-u^1 (t,x)\leq u^2(t,x)\leq \overline{v}(t,x)-u^1 (t,x),\\
 &\int_0^T\int_{\bbR^d} \left( {u^2}(t,x)-(\underline{v}(t,x)-u^1 (t,x))\right)\nu^+ (dt,dx)=\int_0^T\int_{\bbR^d} \left( (\overline{v}(t,x)-u^1 (t,x))-{u^2}(t,x)\right)\nu^- (dt,dx)=0,\\
 &u^2_T=0.\end{split}\right. 
\end{equation*}
It is now easy to conclude since $u=u^1 +u^2$.
\end{proof}

The next proposition will ensure the uniqueness of solution.
\begin{proposition} Let $(u,\nu^+ ,\nu^-)$ be a solution of the linear SPDE with two obstacles \eqref{OSPDElinear}. We consider that $u$ is the quasi-continuous version and  denote by $K^+$ and $K^-$ the processes such that:
$$\forall \varphi \in \mathcal{C}_b ([0,T]\times \R^d),\ \ \int_0^T \int_{\mathbb{R}^d} \varphi (t,x) \, \nu^+\big(dt,dx\big) = \E^m \int_0^T \varphi_t(W_t) \,dK_t ^+$$
and
$$\forall \varphi \in \mathcal{C}_b ([0,T]\times \R^d),\ \ 
\int_0^T \int_{\mathbb{R}^d} \varphi (t,x) \, \nu^-\big(dt,dx\big) = \E^m \int_0^T \varphi_t(W_t) \,dK_t ^-.$$  
We define the processes:
$$\forall t\in [0,T], \ \ Y_t =u(t,W_t)\  \makebox{ and }\ Z_t =\nabla u(t,W_t).$$
Then $(Y,Z,K^+ ,K^- )$ is a solution to DRDBSDE of Corollary \ref{RBDSDE:definition}.
\end{proposition}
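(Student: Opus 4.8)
The plan is to reverse the construction used in the existence part: there we built $(Y,Z,K^+,K^-)$ by penalization and then produced $(u,\nu^+,\nu^-)$; now we start from an arbitrary weak solution $(u,\nu^+,\nu^-)$ of \eqref{OSPDElinear} and must show that the associated processes $Y_t=u(t,W_t)$, $Z_t=\nabla u(t,W_t)$, together with the additive functionals $K^\pm$ attached to $\nu^\pm$, solve the DRBDSDE of Corollary \ref{RBDSDE:definition}. The natural strategy is to split off the linear part without obstacles, exactly as in the previous lemma. Let $u^1\in\mathcal H_T$ be the (quasi-continuous) solution of
\begin{equation*}
du^1(t,x)+\tfrac12\Delta u^1(t,x)\,dt+f_t(x)\,dt+\mathrm{div}\,g_t(x)\,dt+h_t(x)\cdot\overleftarrow{dB}_t=0,\quad u^1_T=\Psi,
\end{equation*}
which by Theorem \ref{FK} satisfies $Y^1_t:=u^1(t,W_t)$, $Z^1_t:=\nabla u^1(t,W_t)$ and the stochastic representation \eqref{Ito:u}. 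Set $u^2:=u-u^1$, $Y^2:=Y-Y^1$, $Z^2:=Z-Z^1$. Since $u\in\mathcal H_T$ and $u^1\in\mathcal H_T$, also $u^2\in\mathcal H_T$ and $u^2$ is quasi-continuous, with $Y^2_t=u^2(t,W_t)$, $Z^2_t=\nabla u^2(t,W_t)$ $\mathbb P\otimes\mathbb P^m$-a.e.

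The core step is to establish the It\^o-type stochastic representation for $u^2$, namely
\begin{equation*}
u^2_t(W_t)=\sum_i\int_t^T\partial_i u^2_s(W_s)\,dW^i_s+\bigl(K^+_T-K^+_t\bigr)-\bigl(K^-_T-K^-_t\bigr),\quad\mathbb P\otimes\mathbb P^m\text{-a.e.},
\end{equation*}
where $K^\pm$ are the continuous additive functionals associated to $\nu^\pm$ via \eqref{FormuleMeasure}, i.e. $\nu^\pm(\varphi)=\mathbb E^m\int_0^T\varphi(t,W_t)\,dK^\pm_t$. To do this I would test the weak formulation \eqref{weak:RSPDE} for $u^2$ (which satisfies the obstacle-free equation with source $\nu^+-\nu^-$ and terminal value $0$) against $\varphi\in\mathcal D_T$, and combine it with the probabilistic representation of the divergence-type term (Theorem on \eqref{divergence:term}, here in the doubly stochastic form \eqref{Ito:u}) applied to $u^1$. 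Concretely, apply It\^o's formula to $\varphi_t(W_t)Y^2_t$, take $\mathbb E^m$, identify $\mathbb E^m\int_t^T\Delta\varphi_s(W_s)u^2_s(W_s)\,ds=-\int_t^T(\nabla\varphi_s,\nabla u^2_s)\,ds$ and $\mathbb E^m\int_t^T\varphi_s(W_s)\,dK^\pm_s=\int_t^T\int_{\mathbb R^d}\varphi_s\,\nu^\pm(ds,dx)$, and read off that the weak relation \eqref{weak:RSPDE} for $u^2$ is equivalent to the desired martingale representation; this is the same computation that appears in the proof of the previous lemma, run in reverse. A density argument in $\varphi$ and the usual monotone-class / quasi-continuity arguments then upgrade this from "for each $\varphi$" to the pathwise identity, using that $\nu^\pm$ do not charge polar sets (Proposition \ref{DefMeasure}).

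Once the representation for $u^2$ is in hand, adding back $Y^1$ and using \eqref{Ito:u} for $u^1$ gives precisely
\begin{equation*}
Y_t=\Psi(W_T)+\int_t^T f_r(W_r)\,dr-\tfrac12\int_t^T g_r*dW_r+\int_t^T h_r(W_r)\cdot\overleftarrow{dB}_r-\sum_i\int_t^T Z_{i,r}\,dW^i_r+K^+_T-K^+_t-K^-_T+K^-_t,
\end{equation*}
which is the DRBDSDE. The barrier inequalities $L_t\le Y_t\le U_t$ for all $t$ follow from $\underline v\le u\le\overline v$ $d\mathbb P\otimes dt\otimes dx$-a.e.\ together with the quasi-continuity of $\tilde u$, $\underline v$, $\overline v$ (which makes $s\mapsto\tilde u_s(W_s)$, $L_s$, $U_s$ continuous) and the property that a set which is $dt\otimes dx$-null and has continuous barrier processes is not seen by the path $s\mapsto(s,W_s)$ — this is exactly property 3 of regular measures restated for continuous processes. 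Finally, the Skorokhod conditions $\int_0^T(Y_s-L_s)\,dK^+_s=\int_0^T(U_s-Y_s)\,dK^-_s=0$ translate the Skorokhod conditions $\int_0^T\int_{\mathbb R^d}(\tilde u-\underline v)\,\nu^+=\int_0^T\int_{\mathbb R^d}(\overline v-\tilde u)\,\nu^-=0$ via \eqref{FormuleMeasure} applied (by approximation) to the non-negative continuous integrands $\tilde u-\underline v$ and $\overline v-\tilde u$.

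I expect the main obstacle to be the rigorous passage from the weak (tested-against-$\varphi$) identity to the pathwise stochastic representation for $u^2$, i.e.\ controlling the measure terms $\int\varphi\,\nu^\pm$ through the additive functionals $K^\pm$ when $K^\pm$ are only in $\mathcal A_1$ (extended regular, a uniform limit of genuine regular additive functionals) rather than in $\mathcal A_2$. The integrability $\mathbb E\mathbb E^m[(K^\pm_T)]<\infty$ from the definition of random extended regular measure, plus the fact that extended regular measures do not charge polar sets, should be exactly what is needed; but care is required because the representation formula \eqref{FormuleMeasure} is a priori only an identity of measures against $\mathcal C_c$ (or $\mathcal C_b$) test functions, and extending it to the time-dependent test functions and to the Skorokhod integrands $\tilde u-\underline v$, $\overline v-\tilde u$ needs an approximation argument using the uniform convergence $A^n\to A$ and continuity of the integrands along $(s,W_s)$.
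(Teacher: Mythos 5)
Your global strategy --- splitting off the obstacle-free linear part $u^1$ via Theorem \ref{FK} and reducing to the representation of $u^2=u-u^1$, whose equation involves only the measure $\nu^+-\nu^-$ --- is exactly the reduction the paper makes. The gap is in the core step. You propose to obtain the pathwise identity $u^2_t(W_t)=\sum_i\int_t^T\partial_i u^2_s(W_s)\,dW^i_s+(K^+_T-K^+_t)-(K^-_T-K^-_t)$ by ``applying It\^o's formula to $\varphi_t(W_t)Y^2_t$'' and running the computation of the preceding lemma in reverse. This is circular: It\^o's formula for the product $\varphi_t(W_t)Y^2_t$ presupposes that $Y^2_t=u^2(t,W_t)$ is a semimartingale with a known decomposition, and that decomposition is precisely what has to be established here. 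In the preceding lemma the decomposition was available because $(Y^2,Z^2,K^+,K^-)$ had already been constructed as a solution of the DRBDSDE; now you start only from the weak formulation \eqref{weak:RSPDE}, which is an identity of deterministic quantities obtained after testing against $\varphi\in\mathcal{D}_T$ and integrating against $\mathbb{E}^m$. A ``density argument in $\varphi$ plus monotone class'' does not by itself upgrade such tested identities to a pathwise semimartingale decomposition, and you give no mechanism that does.

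The paper closes this gap with a different device: reducing to $f=g=h=0$, it approximates $u$ by smooth functions $u^n\in C_c^{\infty}([0,T]\times\mathbb{R}^d)$ converging in the $\mathcal{H}_T$ norm, applies the classical It\^o formula to $u^n(t,W_t)$ to get \eqref{appxun} with an explicit finite-variation part $K^n_t=u^n_0(W_0)+\int_0^t\big(\partial_s u^n_s+\frac12\Delta u^n_s\big)(W_s)\,ds$, shows by integration by parts that $\mathbb{E}^m\int_0^T\varphi_t(W_t)\,dK^n_t$ converges to $\mathbb{E}^m\int_0^T\varphi_t(W_t)\,d(K^+_t-K^-_t)$, and then --- this is the identification you are missing --- uses the $L^2$ convergence of $Y^n$ and $Z^n$ to deduce that $K^n_t$ converges in $L^2$ for each $t$, the limit being necessarily $K^+_t-K^-_t$ because $K^n$ and $K^+-K^-$ belong to $\mathcal{A}_2$ and an additive functional there is uniquely determined by its associated measure. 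Without this uniqueness-of-additive-functionals argument, or an equivalent substitute, your proof does not go through. Your treatment of the barrier inequalities and of the Skorokhod conditions via \eqref{FormuleMeasure} is reasonable, but those are the easy parts.
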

\begin{proof} As seen in the proof of the preceding lemma and as a consequence of Theorem 3 in \cite{MatoussiStoica}, we just need to prove the result in the case where $f=g=h=0$. So, let $u$ be a solution of 
\begin{equation}{\label{spdel0}}
 \left\{\begin{split}& d u (t,x) +
 \;  \frac{1}{2} \Delta u (t,x)dt  + \nu^+ (dt,x)-\nu^- (dt,x )=0,\\
 &\underline{v}(t,x)\leq u(t,x)\leq \overline{v}(t,x),\\
 &\int_0^T\int_{\bbR^d} \left( {u}(t,x)-\underline{v}(t,x))\right)\nu^+ (dt,dx)=\int_0^T\int_{\bbR^d} \left( \overline{v}(t,x)-{u}(t,x)\right)\nu^- (dt,dx)=0,\\
 &u_T=0.\end{split}\right. 
\end{equation}
Since $u$ is in $\mathcal{H}_T$, we can approximate it by a sequence of functions $u^n\in C_c^{\infty} ([0,T]\times\R^d)$ such that 
$$\lim_{n\rightarrow +\infty}\E\left[\sup_{t\in [0,T]}
\| u^n_t -u_t \|^2+\int_0^T \| \nabla (u^n_t -u_t)\|^2 dt\right]=0.$$
Set $Y^n_t :=u^n(t,W_t)\makebox{ and }Z^n_t :=\nabla u^n(t,W_t).$
As a consequence of It\^o's formula, we have
\begin{equation}\label{appxun} Y^n_t = u^n_0(W_0)+\int_0^t\left(  \partial_s u^n_s (W_s )+\frac12 \Delta u^n_s (W_s )\right) ds +\int_0^t Z^n_s \, dW_s.
\end{equation}
Define $K^n_t :=u^n_0 (W_0 )+\int_0^t\left(  \partial_s u^n_s (W_s )+\frac12 \Delta u^n_s (W_s )\right) ds$, { and $K_t =K^+_t -K^-_t$,} then for any $\varphi\in\mathcal{D}_T$, by integration by parts argument we obtain
\begin{eqnarray*}
\E^m \int_0^T \varphi_t (W_t)dK^n_t=-\int_0^T (u^n_s ,\partial_s\varphi_s)ds -\frac12 \int_0^T (\nabla u^n_s ,\nabla \varphi_s )\, ds.
\end{eqnarray*}
Making $n$ tend to infinity, we get, since $u$ solves \eqref{spdel0},
$$\lim_{n\rightarrow +\infty}\E^m \int_0^T \varphi_t (W_t)dK^n_t=\E^m \int_0^T \varphi_t (W_t)dK_t\,.$$
But, since $Y^n$ and $Z^n$ converge in $L^2$, we deduce that for all $t$, $K^n_t$ converges in $L^2$ to a limit which necessarily is nothing but $K_t$ as $K^n$ and $K$ belong to $\mathcal{A}_2$ $\mathbb{P}$-a.s.\\
{\red Finally coming back to equation \eqref{appxun} and making $n$ tend to $+\infty$, we conclude that $(Y,Z,K^+, K^-)$ satisfies the desired DRDBSDE.}
\end{proof}

\subsection{It\^o's formula}
In this section we will prove the It\^o's formula for the solution of DOSPDE.
Let us also remark that any solution of the nonlinear equation \eqref{SPDE1} may be viewed as 
the solution of a linear one, so we only need to establish the  It\^o's formula in the linear case i.e. for the solution of equation \eqref{OSPDElinear}.
\begin{theorem}\label{Itoformula}
Under assumptions {\bf (HD2)} and {\bf (HO)}, let $(u,\nu^+,\nu^-)$ be the solution of linear SPDE with two obstacles (\ref{OSPDElinear}) and
$\Phi:\mathbb{R}^+\times\mathbb{R}\rightarrow\mathbb{R}$ be a
function of class $\mathcal{C}^{1,2}$. We denote by $\Phi'$ and
$\Phi''$ the derivatives of $\Phi$ with respect to the space
variables and by $\frac{\partial\Phi}{\partial t}$ the partial
derivative with respect to time. We assume that there exists a constant $C>0$, such that $|\Phi''|\leq C$, $|\frac{\partial \Phi}{\partial t}|\leq C(|x|^2 \vee 1)$,
and $\Phi'(t,0)=0$ for all $t\geq0$. Then $\bbP-a.s.$ for
any $t\in[0,T]$,
\begin{equation*}\begin{split}
&\int_{\bbR^d}\Phi(t,u_t(x))dx+\frac12\int_t^T\int_{\R^d}\Phi''(s,u_s(x))\left|\nabla u_s(x)\right|^2dxds=\int_{\bbR^d}\Phi(T,\Psi(x))dx-\int_t^T\int_{\bbR^d}\frac{\partial\Phi}{\partial
s}(s,u_s(x))dxds\\&+\int_t^T(\Phi'(s,u_s),f_s)ds
-\sum_{i=1}^d\int_t^T\int_{\bbR^d}\Phi''(s,u_s(x))\partial_iu_s(x)g_i(x)dxds+\sum_{j=1}^{d^1}\int_t^T(\Phi'(s,u_s),h_j)\overleftarrow{dB}_s^j\\&
+\frac{1}{2}\sum_{j=1}^{d^1}\int_t^T\int_{\bbR^d}\Phi''(s,u_s(x))(h_{j,s}(x))^2dxds+\int_t^T\int_{\bbR^d}\Phi'(s,u_s(x))(\nu^+-\nu^-)(ds,dx).
\end{split}\end{equation*}
\end{theorem}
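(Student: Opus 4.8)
The plan is to reduce the SPDE-level It\^o formula to its probabilistic counterpart on the trajectories of the time-space Brownian motion, exactly as in the analogous one-obstacle statement of \cite{MatoussiStoica}. Writing $Y_t=u(t,W_t)$, $Z_t=\nabla u(t,W_t)$, and letting $(Y,Z,K^+,K^-)$ be the associated DRBDSDE solution from Corollary \ref{RBDSDE:definition}, I would first establish a pathwise It\^o formula for $\Phi(t,Y_t)$ under $\mathbb{P}\otimes\mathbb{P}^m$. Since $Y$ has the semimartingale decomposition
\begin{equation*}
Y_t = \Psi(W_T)+\int_t^T f_r(W_r)\,dr-\tfrac12\int_t^T g_r*dW_r+\int_t^T h_r(W_r)\cdot\overleftarrow{dB}_r-\sum_i\int_t^T Z_{i,r}\,dW_r^i+(K_T^+-K_t^+)-(K_T^--K_t^-),
\end{equation*}
applying the classical It\^o formula for $\Phi(t,\cdot)\in\mathcal{C}^{1,2}$ (handling the forward $dW$, backward $\overleftarrow{dB}$ and finite-variation $dK^\pm$ parts, and noting the quadratic variation of $Y$ is $|Z_r|^2+|h_r(W_r)|^2$ since the $W$- and $B$-integrals are orthogonal and $g*dW$ has the $dr$-absolutely-continuous behaviour discussed in the Remark after \eqref{divergence:term}) yields a pathwise identity for $\Phi(t,Y_t)$ involving $\int_t^T\Phi'(s,Y_s)\,dK_s^+$ and $-\int_t^T\Phi'(s,Y_s)\,dK_s^-$.

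Next I would take the expectation $\mathbb{E}^m[\,\cdot\,]$ of this pathwise identity and translate each term back to the analytic side. The stochastic integrals $\int\partial_iu_s(W_s)\,dW_s^i$ vanish in $\mathbb{E}^m$-expectation; the term $\int\Phi''(s,Y_s)|Z_s|^2\,ds$ becomes $\int_t^T\int_{\mathbb{R}^d}\Phi''(s,u_s(x))|\nabla u_s(x)|^2\,dx\,ds$; the drift $\int\Phi'(s,Y_s)f_s(W_s)\,ds$ becomes $\int_t^T(\Phi'(s,u_s),f_s)\,ds$; the $g*dW$ contribution, by the relation $\int_s^t g_r*dW_r=-2\int_s^t\mathrm{div}\,g(r,W_r)\,dr$ (or by the reversibility argument in the Remark, valid here since the divergence is paired with the $\mathcal{C}^2$-cutoff $\Phi''(s,u_s)\partial_i u_s$ which is regular enough), produces $-\sum_i\int_t^T\int_{\mathbb{R}^d}\Phi''(s,u_s(x))\partial_i u_s(x)g_i(x)\,dx\,ds$; the backward integral $\int\Phi'(s,Y_s)h_{j,s}(W_s)\overleftarrow{dB}_s^j$ becomes $\int_t^T(\Phi'(s,u_s),h_j)\overleftarrow{dB}_s^j$ (the $\mathbb{E}^m$-expectation commuting with the $B$-integral by the independence of $\mathcal{F}_T$ and $\mathcal{F}^B_{0,T}$ under $\mathbb{P}^m\otimes\mathbb{P}$); and the second-order bracket term $\int\Phi''(s,Y_s)|h_s(W_s)|^2\,ds$ becomes $\sum_j\int_t^T\int_{\mathbb{R}^d}\Phi''(s,u_s(x))(h_{j,s}(x))^2\,dx\,ds$. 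Finally, by the very definition of the extended regular measures $\nu^\pm$ and the quasi-continuity of $u$, the identity $\mathbb{E}^m\int_t^T\Phi'(s,\tilde u_s(W_s))\,dK_s^\pm=\int_t^T\int_{\mathbb{R}^d}\Phi'(s,u_s(x))\,\nu^\pm(ds,dx)$ holds — here one uses that $\Phi'(s,\tilde u_s)$ is quasi-continuous (composition of the $\mathcal{C}^1$ function $\Phi'$ with the quasi-continuous $\tilde u$) and bounded on compacts in the relevant range, so it is $\nu^\pm$-integrable and agrees with the pathwise $dK^\pm$-integral $\mathbb{P}\otimes\mathbb{P}^m$-a.e.

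The main obstacle, as usual in this circle of results, is the rigorous justification of the $\nu^\pm$-terms: one must check that $\Phi'(s,\tilde u_s(x))$ is genuinely $\nu^+$- and $\nu^-$-integrable and that the substitution $\int_0^T\varphi_s(W_s)\,dK_s^\pm=\nu^\pm(\varphi)$ extends from $\varphi\in\mathcal{C}_b$ to $\varphi=\Phi'(\cdot,\tilde u)$, which is not bounded globally but only locally. This is handled by a truncation/localization argument: approximate $\Phi'(s,\tilde u_s)$ by bounded quasi-continuous functions (e.g.\ truncating $u$ and using that $\nu^\pm$ does not charge polar sets together with the finiteness $\mathbb{E}\mathbb{E}^m[(K_T^\pm)^2]<\infty$ and the integrability of $\sup_t|\tilde u_t(W_t)|$ inherited from the estimates in Lemma \ref{penalization:estimate1} and Remark \ref{IntergrabilitiObstacles}), and passing to the limit. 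A secondary technical point is justifying that the pathwise It\^o formula can be applied despite $Z$ being only in $L^2$ and $K^\pm$ only continuous increasing; this is standard but requires the regularity $|\Phi''|\le C$ and the growth bound $|\partial_t\Phi|\le C(|x|^2\vee1)$ precisely so that every term is integrable under $\mathbb{P}\otimes\mathbb{P}^m$. Finally, a density/approximation argument in $\mathcal{H}_T$ (approximating $u$ by the $u^n$ built in the penalization scheme, for which the formula holds by \cite{MatoussiStoica}) can be used as an alternative route, passing to the limit using the convergences established in Lemma \ref{convergence:YZK}; I would present whichever of the two is cleaner, most likely the direct pathwise computation.
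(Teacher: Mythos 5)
Your preferred route --- applying ``the classical It\^o formula'' directly to $\Phi(t,Y_t)$ for the limiting quadruple $(Y,Z,K^+,K^-)$ of Corollary \ref{RBDSDE:definition} and then taking $\E^m$ --- has a genuine gap at its very first step. The process $Y_t=u(t,W_t)$ is not a semimartingale with respect to a single filtration: its decomposition mixes a forward integral in $dW$, a backward integral in $\overleftarrow{dB}$, the term $\int g_r*dW_r$ (which is a sum of a forward and a backward $W$-integral and, for general $g\in L^2$, is \emph{not} absolutely continuous in time --- the identity $\int_s^t g_r*dW_r=-2\int_s^t\mathrm{div}\,g(r,W_r)\,dr$ in the Remark is stated only for spatially regular $g$), and the two singular continuous increasing parts $K^\pm$. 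There is no off-the-shelf It\^o formula for such a process; the only available tools (Lemma 1.3 in \cite{PardouxPeng94}, Lemma 4.3 in \cite{Stoica}) cover doubly stochastic It\^o processes with absolutely continuous drifts. In particular the cross term $-\int_t^T\Phi''(s,Y_s)\langle g_s,Z_s\rangle\,ds$ arises as a second-order covariation between the $g*dW$ part and the $Z\,dW$ part, not from your claimed quadratic variation $|Z_r|^2+|h_r(W_r)|^2$ nor from an integration by parts of a $\mathrm{div}\,g\,dr$ drift; getting it right is precisely the content of \cite{Stoica}. Asserting the pathwise formula for $Y$ with the $dK^\pm$ terms is essentially asserting the theorem itself, so the direct route is circular unless you prove that formula --- which is what the approximation is for.

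The route you relegate to an ``alternative'' is in fact the paper's proof, and it is where the substance lies. One starts from the penalized equation \eqref{BSDE:n}, for which the upper reflection is the absolutely continuous drift $-n(Y^n_s-U_s)^+ds$ and the lower reflection $K^{+,n}$ is covered by the one-obstacle It\^o formula of Lemma \ref{itolowerobstacle} (itself obtained by a \emph{second} penalization from below plus the Pardoux--Peng formula). One then passes to the limit in $n$: the $Z^n$-terms converge after localization by the stopping times $\tau_N$ using Lemma \ref{convergence:YZK} and a diagonal extraction, and --- this is the delicate part your sketch omits --- the terms $\int_t^T\Phi'(s,Y^n_s)\,dK^{\pm,n}_s$ converge to $\int_t^T\Phi'(s,Y_s)\,dK^\pm_s$ by splitting off $\sup_s|Y^n_s-Y_s|\cdot K^{\pm,n}_T$ (controlled by the uniform convergence of $Y^n$ and the $L^1$/$L^2$ bounds on $K^{\pm,n}$) and using the a.s.\ weak convergence of the measures $dK^{+,n}$, resp.\ $dK^{-,\delta(n)}$ along the subsequence constructed in Lemma \ref{convergence:YZK}, tested against the continuous map $s\mapsto\Phi'(s,Y_s)$. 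Only at the very end does one translate $dK^\pm$ into $\nu^\pm$ via the relation between $(Y,Z,K^+,K^-)$ and $(u,\nu^+,\nu^-)$; your concerns about extending $\nu^\pm(\varphi)=\E^m\int\varphi_t(W_t)\,dK^\pm_t$ to the unbounded test function $\Phi'(\cdot,\tilde u)$ are legitimate but secondary (note $|\Phi'(t,x)|\le C|x|$ since $\Phi'(t,0)=0$ and $|\Phi''|\le C$, so localization by $\tau_N$ handles it). If you rewrite your argument around the penalization scheme and make the convergence of the $dK^{\pm,n}$-integrals explicit, you will have the paper's proof.
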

\begin{proof}
We begin with the penalization equation of the corresponding DRBDSDE: 
 \begin{equation*}\begin{split}
Y_{t}^n = &\,\xi +\int_{t}^{T}f_s(W_s)ds - n \int_t^T (Y_s^n  - U_s)^{+}\, ds -\frac{1}{2}\int_{t}^{T}g_s*dW_s
+ \int_{t}^{T}h_s(W_s)\cdot\overleftarrow{dB}_s \\&-\sum\limits_{i}\int_{t}^{T}Z^n_{i,s}dW_{s}^{i} + K_T^{+,n}-K_t^{+,n}\,.
\end{split}\end{equation*}
For $(Y^n,Z^n,K^{+,n})$, we have the following It\^o's formula (see Lemma \ref{itolowerobstacle}): $\forall t\in[0,T]$, $\mathbb{P}$-a.s.,
\begin{equation*}\begin{split}
&\Phi(t,Y^n_t)=\Phi(T,Y^n_T)-\int_t^T\frac{\partial\Phi}{\partial s}(s,Y^n_s)ds+\int_t^T\Phi'(s,Y^n_s)f_s(W_s)ds-\int_t^T\Phi'(s,Y^n_s)n(Y_s^n-U_s)^+ds\\
&-\frac{1}{2}\int_t^T\Phi'(s,Y^n_s)g_s*dW_s+\int_t^T\Phi'(s,Y^n_s)h_s(W_s)\cdot\overleftarrow{dB}_s-\int_t^T\Phi'(s,Y^n_s)Z^n_sdW_s+\int_t^T\Phi'(s,Y^n_s)dK_s^{+,n}\\
&+\frac{1}{2}\int_t^T\Phi''(s,Y_s^n)\left|h_s(W_s)\right|^2ds-\int_t^T\Phi''(s,Y_s^n)\langle g_s(W_s),Z^n_s\rangle ds-\frac{1}{2}\int_t^T\Phi''(s,Y_s^n)|Z^n_s|^2ds.
\end{split}\end{equation*}
It is clear  that all the terms in the above equality converge to the desired terms except those involving the process $Z^n$ and the terms $\int_0^T\Phi'(s,Y_s^n)dK_s^{+,n}$ 
and $\int_0^T\Phi'(s,Y_s^n)dK_s^{-,n}$. \\ 
For each $N\in\N$, thanks to Lemma \ref{convergence:YZK}, it is easy to verify that for example $\int_0^{t\wedge\tau_N}\Phi''(s,Y_s^n)|Z^n_s|^2ds$ converges in $L^1$ to $\int_0^{t\wedge\tau_N}\Phi''(s,Y_s)|Z_s|^2ds$, which implies the convergence almost sure by a diagonal extraction procedure of $\int_0^{t}\Phi''(s,Y_s^n)|Z^n_s|^2ds$ to $\int_0^{t}\Phi''(s,Y_s)|Z_s|^2ds$ for all $t\in [0,T]$.\\
Since
\begin{equation*}\begin{split}
&\left|\int_t^T\Phi'(s,Y_s^n)dK_s^{+,n}-\int_t^T\Phi'(s,Y_s)dK_s^+\right|\\
=&\,\left|\int_t^T\big(\Phi'(s,Y_s^n)-\Phi'(s,Y_s)\big)dK_s^{+,n}+\int_t^T\Phi'(s,Y_s)d\,(K_s^{+,n}-K_s^+)\right|.
\end{split}\end{equation*}
It is clear that 
$$\left|\int_t^T\big(\Phi'(s,Y_s^n)-\Phi'(s,Y_s)\big)dK_s^{+,n}\right|\leq  C\sup_{s\in[t,T]}\big|Y_s^n-Y_s\big|K_T^{+,n}\rightarrow0,\quad as\ \ n\rightarrow\infty.$$
As $K^{+,n}$ tends to $K^+$, we deduce that almost surely the sequence $(dK^{+,n})_n$ of measures on $[0,T]$ converges weakly to $dK^+$. Combing with the fact that  the map $s\rightarrow\Phi'(s,Y_s)$ is continuous, then we have 
$$\int_t^T\Phi'(s,Y_s)d\,(K_s^{+,n}-K_s^+)\rightarrow0,\quad as\ \ n\rightarrow\infty.$$
Similar arguments can be done for $\int_0^T\Phi'(s,Y_s^n)dK_s^{-,n}$, or more precisely, for $\int_0^T\Phi'(s,Y_s^{\delta (n)})dK_s^{-,\delta(n)}$ where $(\delta (n))_n$ is the subsequence  in the proof of Lemma \ref{convergence:YZK}. \\
At last, using the relation between $(u,\nu^+,\nu^-)$ and $(Y,Z,K^+,K^-)$, we get the desired formula. \end{proof}
\begin{remark}
We also need the It\^o's formula for the difference between two DOSPDEs which is fundamental to do the fixed point argument in the nonlinear case (see Subsection \ref{fixedpointnonlinear}) and to get the comparison theorem (see Theorem \ref{comparaison}). The proof will be similar to that of Theorem \ref{Itoformula}.
The only difference is that we begin with the It\^o's formula for the difference between the penalized solutions of two OSPDEs (see Theorem 6 in \cite{DMZ12}). 
We postpone this in the appendix. 
\end{remark}

\subsection{Proof of Theorem \ref{maintheorem} in the nonlinear case}\label{fixedpointnonlinear}
Let us consider the Picard sequence $(Y^n,Z^n)_n$
defined by $(Y^0,Z^0)=(0,0)$ and for all $n\in\mathbb{N}$ we denote by $(Y^{n+1},Z^{n+1},K^{+,n+1},K^{-,n+1})$ the solution of the linear DRBSDE as in the previous subsection
\begin{equation}\label{Appx1}\begin{split}
Y^{n+1}_t&=\xi+\int_t^Tf_s(W_s,Y^n_s,Z^n_s)ds-\frac{1}{2}\int_t^Tg_s(Y^n_s,Z^n_s)*dW_s+\int_t^Th_s(W_s,Y^n_s,Z^n_s)\cdot\overleftarrow{dB}_s
\\&\qquad-\int_t^TZ^{n+1}_sdW_s+K_T^{+,n+1}-K_t^{+,n+1}-K_T^{-,n+1}+K_t^{-,n+1}
\end{split}\end{equation}
with $L_t\leq Y^{n+1}_t\leq U_t,\; \forall \, t \in [0,T] $,  $\big(K_t^{+, n+1}\big)_{t \in[0,T]}$ and $\big(K_t^{-,n+1}\big)_{t \in[0,T]}$ being increasing continuous processes and   
\begin{equation*}
\int_{0}^{T}(Y_t^{n+1}-L_t)dK^{+,n+1}_t = \int_0^T(U_t-Y^{n+1}_t) dK_t^{-,n+1}= 0.
\end{equation*}
From now on, we introduce positive constants $\mu$ and $\varepsilon$ that we'll fix precisely later.\\
Applying It\^o's formula to $e^{\mu t}(Y^{n+1}_t-Y^n_t)^2$, we have almost surely, for all $t\in[0,T]$,
\begin{equation}\label{EgPicart}\begin{split}
&e^{\mu t}|Y^{n+1}_t-Y_t^n|^2+\int_t^Te^{\mu s}|Z^{n+1}_s-Z^n_s|^2ds+\mu \int_t^Te^{\mu s}|Y^{n+1}_s-Y^n_s|^2ds
\\=&\,2\int_t^Te^{\mu s}(Y^{n+1}_s-Y_s^n)\left[f_s(Y^n_s,Z^n_s)-f_s(Y^{n-1}_s,Z^{n-1}_s)\right]ds
\\&+\,2\int_t^Te^{\mu s}(Y^{n+1}_s-Y_s^n)d(K_s^{+,n+1}-K_s^{+,n})-2\int_t^Te^{\mu s}(Y^{n+1}_s-Y_s^n) d(K_s^{-,n+1}-K_s^{-,n})
\\&-2\,\int_t^Te^{\mu s}\langle Z^{n+1}_s-Z^n_s,g_s(Y^n_s,Z^n_s)-g_s(Y^{n-1}_s,Z^{n-1}_s)\rangle ds
\\&-\,\int_t^Te^{\mu s}(Y^{n+1}_s-Y_s^n)\left[g_s(Y^n_s,Z^n_s)-g(Y^{n-1}_s,Z^{n-1}_s)\right]*dW_s
\\&-\,2\int_t^Te^{\mu s}(Y^{n+1}_s-Y^n_s)(Z^{n+1}_s-Z^n_s)dW_s+\int_t^Te^{\mu s}|h_s(Y^n_s,Z^n_s)-h_s(Y^{n-1}_s,Z^{n-1}_s)|^2ds
\\&+\,2\int_t^Te^{\mu s}(Y^{n+1}_s-Y^n_s)\left[h_s(Y^n_s,Z^n_s)-h_s(Y^{n-1}_s,Z^{n-1}_s)\right]\cdot\overleftarrow{dB}_s\,.
\end{split}\end{equation}
Remarking the following relations: 
\begin{equation*}\begin{split}
&\int_t^Te^{\mu s}(Y^{n+1}_s-Y^{n}_s)d(K_s^{+,n+1}-K^{+,n}_s)\\
=&\int_t^Te^{\mu s}(Y_s^{n+1}-L_s)dK_s^{+,n+1}-\int_t^Te^{\mu s}(Y^n_s-L_s)dK_s^{+,n+1}
\\&-\int_t^Te^{\mu s}(Y^{n+1}_s-L_s)dK_s^{+,n}+\int_t^Te^{\mu s}(Y^n_s-L_s)dK_s^{+,n}\leq0
\end{split}\end{equation*}
and
\begin{equation*}\begin{split}
&-\int_t^Te^{\mu s}(Y^{n+1}_s-Y^n_s)d(K_s^{-,n+1}-K^{-,n}_s)\\=&\int_t^Te^{\mu s}(Y^n_s-U_s)dK_s^{-,n+1}
+\int_t^Te^{\mu s}(U_s-Y^{n+1}_s)dK_s^{-,n+1}
\\&-\int_t^Te^{\mu s}(Y^n_s-U_s)dK_s^{-,n}-\int_t^Te^{\mu s}(U_s-Y^{n+1}_s)dK_s^{-,n}\leq0.
\end{split}\end{equation*}
Let $\varepsilon\leq1$. The Lipschitz condition and Cauchy-Schwarz's inequality yield 
\begin{equation*}\begin{split}&2(Y^{n+1}_t-Y^n_t)\left[f_t(Y^n_t,Z^n_t)-f_t(Y^{n-1}_t,Z^{n-1}_t)\right]\\
\leq&\,\frac{1}{\varepsilon}\left|Y^{n+1}_t-Y^n_t\right|^2+\varepsilon\left|f_t(Y^n_t,Z^n_t)-f_t(Y^{n-1}_t,Z^{n-1}_t)\right|^2\\
\leq&\,\frac{1}{\varepsilon}\left|Y^{n+1}_t-Y^n_t\right|^2+C\varepsilon\left|Y^{n}_t-Y^{n-1}_t\right|^2+C\varepsilon\left|Z^n_t-Z^{n-1}_t\right|^2
\end{split}\end{equation*}and
\begin{equation*}\begin{split}
&2\langle Z_t^{n+1}-Z^n_t,g_t(Y^n_t,Z^n_t)-g_t(Y^{n-1}_t,Z^{n-1}_t)\rangle\\
\leq&\,2\left|Z_t^{n+1}-Z^n_t\right|(C\left|Y^n_t-Y^{n-1}_t\right|+\alpha\left|Z^n_t-Z^{n-1}_t\right|)\\
\leq&\,C\varepsilon\left|Z_t^{n+1}-Z_t^n\right|^2+\frac{C}{\varepsilon}\left|Y_t^n-Y_t^{n-1}\right|^2+\alpha\left|Z_t^{n+1}-Z_t^n\right|^2+\alpha\left|Z^n_t-Z^{n-1}_t\right|^2.
\end{split}\end{equation*}
Moreover,
\begin{equation*}\begin{split}
&\left|h_t(Y^n_t,Z^n_t)-h_t(Y^{n-1}_t,Z^{n-1}_t)\right|^2
\leq(C|Y_t^n-Y_t^{n-1}|+\beta|Z_t^n-Z_t^{n-1}|)^2\\=&\,C^2|Y_t^n-Y_t^{n-1}|^2+2C\beta|Y_t^n-Y_t^{n-1}||Z^n_t-Z^{n-1}_t|+\beta^2|Z^n_t-Z^{n-1}_t|^2\\
\leq&\, C^2(1+\frac{1}{\varepsilon})|Y_t^n-Y_t^{n-1}|^2+\beta^2(1+\varepsilon)|Z_t^n-Z_t^{n-1}|^2.
\end{split}\end{equation*}
Therefore, 
\begin{equation*}\begin{split}
&2\mathbb{E}\mathbb{E}^m\int_t^Te^{\mu s}(Y_s^{n+1}-Y_s^n)\left[f_s(Y^n_s,Z^n_s)-f_s(Y^{n-1}_s,Z^{n-1}_s)\right]ds\\
\leq&\, \frac{1}{\varepsilon}\mathbb{E}\mathbb{E}^m\int_t^Te^{\mu s}|Y^{n+1}_s-Y_s^n|^2ds+C\varepsilon \mathbb{E}\mathbb{E}^m\int_t^Te^{\mu s}\left[|Y_s^n-Y_s^{n-1}|^2+|Z_s^n-Z_s^{n-1}|^2\right]ds
\end{split}\end{equation*}and
\begin{equation*}\begin{split}
&2\mathbb{E}\mathbb{E}^m\int_t^Te^{\mu s}\langle Z^{n+1}_s-Z_s^n,g_s(Y^n_s,Z^n_s)-g_s(Y^{n-1}_s,Z^{n-1}_s)\rangle ds\\
\leq& (C\varepsilon+\alpha)\mathbb{E}\mathbb{E}^m\int_t^Te^{\mu s}\left|Z^{n+1}_s-Z_s^n\right|^2ds
+\frac{C}{\varepsilon}\mathbb{E}\mathbb{E}^m\int_t^Te^{\mu s}\left|Y_s^n-Y_s^{n-1}\right|^2ds\\&+\alpha \mathbb{E}\mathbb{E}^m\int_t^Te^{\mu s}\left|Z_s^n-Z_s^{n-1}\right|^2ds.
\end{split}\end{equation*}
Also, we get 
\begin{equation*}\begin{split}
&\mathbb{E}\mathbb{E}^m\int_t^Te^{\mu s}|h_s(Y^n_s,Z^n_s)-h_s(Y^{n-1}_s,Z^{n-1}_s))|^2ds\\
\leq&\,C^2(1+\frac{1}{\varepsilon})\mathbb{E}\mathbb{E}^m\int_t^Te^{\mu s}|Y_s^n-Y_s^{n-1}|^2ds
+\beta^2(1+\varepsilon)\mathbb{E}\mathbb{E}^m\int_t^Te^{\mu s}|Z_s^n-Z_s^{n-1}|^2ds.
\end{split}\end{equation*}
We deduce that 
\begin{equation*}\begin{split}
&(\mu-\frac{1}{\varepsilon})\mathbb{E}\mathbb{E}^m\int_t^Te^{\mu s}|Y^{n+1}_s-Y_s^n|^2ds+(1-\alpha-C\varepsilon)\mathbb{E}\mathbb{E}^m\int_t^Te^{\mu s}|Z^{n+1}_s-Z_s^n|^2ds\\
\leq&\,C(C+1)(1+\frac{1}{\varepsilon})\mathbb{E}\mathbb{E}^m\int_t^Te^{\mu s}|Y_s^n-Y_s^{n-1}|^2ds+(C\varepsilon+\alpha+\beta^2(1+\varepsilon))\mathbb{E}\mathbb{E}^m\int_t^Te^{\mu s}|Z_s^n-Z_s^{n-1}|^2ds.
\end{split}\end{equation*}
We take the norm $$\left\|(Y,Z)\right\|^2_{\mu,\delta}:=\mathbb{E}\mathbb{E}^m\int_0^Te^{\mu s}(\delta\left|Y_t\right|^2+\left|Z_t\right|^2)dt\,.$$
We can choose $\varepsilon$ small enough and then $\mu$ such that 
$$C\varepsilon+\alpha+\beta^2(1+\varepsilon)<1-\alpha-C\varepsilon\quad\mbox{and}\quad \frac{\mu-1/\varepsilon}{1-\alpha-C\varepsilon}=\frac{C(C+1)(1+1/\varepsilon)}{C\varepsilon+\alpha+\beta^2(1+\varepsilon)}.$$
If we set $\delta=\frac{\mu-1/\varepsilon}{1-\alpha-C\varepsilon}$ and $\delta_0=\frac{C\varepsilon+\alpha+\beta^2(1+\varepsilon)}{1-\alpha-C\varepsilon}\in(0,1)$, we have the following inequality:
\begin{equation*}\begin{split}
\left\|(Y^{n+1}-Y^n,Z^{n+1}-Z^n)\right\|^2_{\mu,\delta}&\leq\delta_0\left\|(Y^n-Y^{n-1},Z^n-Z^{n-1})\right\|^2_{\mu,\delta}\\&\leq...\\&\leq\delta_0^n\left\|(Y^1,Z^1)\right\|^2_{\mu,\delta}.\end{split}\end{equation*}
Since $\delta_0^n\rightarrow0$
when $n\rightarrow\infty$, we conclude that $(Y^n,Z^n)$ is a Cauchy sequence in the $L^2$-space hence converges to a couple $(Y,Z)$ w.r.t the norm $\|\cdot\|_{\mu,\delta}$. \\
Now, coming back to equality \eqref{EgPicart},  similar calculations to the previous ones plus Burkholder-Davies-Gundy's inequality yield
\begin{equation*}\begin{split}
\E\E^m\left[\sup_{t\in [0,T]}|Y^{n+1}_t -Y^n_t|^2\right]\leq&\, C\E\E^m \left[ \int_0^T |Y^{n}_s -Y^{n-1}_s|^2 +|Z^{n}_s -Z^{n-1}_s|^2\, ds\right]\\
&+C\E\E^m \left[\left(\int_0^T|Y^{n+1}_s-Y_s^n|^2 \left| g_s(Y^n_s,Z^n_s)-g(Y^{n-1}_s,Z^{n-1}_s)\right|^2 \, ds\right)^{1/2}\right]\\
&+C\E\E^m \left[\left(\int_0^T|Y^{n+1}_s-Y_s^n|^2 \left| h_s(Y^n_s,Z^n_s)-h(Y^{n-1}_s,Z^{n-1}_s)\right|^2 \, ds\right)^{1/2}\right]\\
&+C\E\E^m \left[\left(\int_0^T|Y^{n+1}_s-Y_s^n|^2 \left|Z^{n+1}_s-Z^n_s\right|^2 \, ds\right)^{1/2}\right]
\end{split}\end{equation*}
and then we remark that
\begin{equation*}\begin{split}
&\E\E^m \left[\left(\int_0^T|Y^{n+1}_s-Y_s^n|^2 \left| g_s(Y^n_s,Z^n_s)-g(Y^{n-1}_s,Z^{n-1}_s)\right|^2 \, ds\right)^{1/2}\right] \\ 
\leq&\,\E\E^m \left[\sup_{t\in [0,T]}|Y^{n+1}_t-Y_t^n| \left(\int_0^T\left| g_s(Y^n_s,Z^n_s)-g(Y^{n-1}_s,Z^{n-1}_s)\right|^2 \, ds\right)^{1/2}\right]\\
\leq&\,\varepsilon' \E\E^m \left[\sup_{t\in [0,T]}|Y^{n+1}_t-Y_t^n|^2\right]+\displaystyle\frac{1}{4\varepsilon'}\E\E^m \left[ \int_0^T\left| g_s(Y^n_s,Z^n_s)-g(Y^{n-1}_s,Z^{n-1}_s)\right|^2 \, ds\right]\\
\leq&\,\varepsilon' \E\E^m \left[\sup_{t\in [0,T]}|Y^{n+1}_t-Y_t^n|^2\right]+ C\E\E^m \left[ \int_0^T |Y^{n}_s -Y^{n-1}_s|^2 +|Z^{n}_s -Z^{n-1}_s|^2\, ds\right],
\end{split}\end{equation*}
where $\varepsilon'$ is arbitrary small. We do the same trick for the {\red last two terms} and finally obtain the following estimate:
\begin{equation*}\begin{split}&\E\E^m\left [\sup_{t\in [0,T]}|Y^{n+1}_t -Y^n_t|^2\right]\\\leq&\,C \left\| (Y^n -Y^{n-1}, Z^n-Z^{n-1})\right\|_{\mu,\delta}^2+C \left\| (Y^{n+1} -Y^{n}, Z^{n+1}-Z^{n})\right\|_{\mu,\delta}^2\leq C\delta_0^{n-1}\,.\end{split}\end{equation*}
So, by standard arguments, we obtain the following convergence:
$$\lim_{n\rightarrow +\infty }\E\E^m \left[ \sup_{t\in [0,T]}|Y^n_t -Y_t|^2 +\int_0^T |Z^n_t -Z_t|^2\, dt\right]=0,$$
here again this ensures that we can choose for $Y$ a time-continuous version.\\
It now remains to  prove the convergences of $K^{+,n}$ and $K^{-,n}$, to the end {\red{we recall the function $\psi$ introduced in Section \ref{section:penalization} which "separates" the two obstacles and which  is defined as a function $\psi\in C^2$ satisfying  $\psi(x)=x$ when $x\in(-\infty, -\kappa]$ and  $\psi(x)=0$ when $x\in[-\frac{\kappa}{2},+\infty)$}}. We have almost surely for $n,m\in\N$ and $\forall t\in[0,T]$, 
\begin{equation*}
\begin{split}
&\psi(Y_t^n-\tilde Y_t)-\psi(Y_t^m-\tilde Y_t)\\=&\,\int_t^T\psi'(Y_s^n-\tilde Y_s)dK_s^{+,n}-\int_t^T\psi'(Y_s^m-\tilde Y_s)dK_s^{+,m}-\int_t^T\psi'(Y_s^n-\tilde Y_s)dK^{-,n}_s\\&+\int_t^T\psi'(Y_s^m-\tilde Y_s)dK^{-,m}_s-\int_t^T\psi'(Y_s^n-\tilde Y_s)(Z_s^n-\tilde Z_s)dW_s+\int_t^T\psi'(Y_s^m-\tilde Y_s)(Z_s^m-\tilde Z_s)dW_s\\&-\frac{1}{2}\int_t^T\psi''(Y_s^n-\tilde Y_s)|Z_s^n-\tilde Z_s|^2ds+\frac{1}{2}\int_t^T\psi''(Y_s^m-\tilde Y_s)|Z_s^m-\tilde Z_s|^2ds\\&+\int_t^T\psi'(Y_s^n-\tilde Y_s)(f_s(Y_s^{n-1},Z_s^{n-1})-\tilde f_s)ds-\int_t^T\psi'(Y_s^m-\tilde Y_s)(f_s(Y_s^{m-1},Z_s^{m-1})-\tilde f_s)ds\\&-\frac{1}{2}\int_t^T\psi'(Y_s^n-\tilde Y_s)(g_s(Y_s^{n-1},Z_s^{n-1})-\tilde g_s)*dW_s+\frac{1}{2}\int_t^T\psi'(Y_s^m-\tilde Y_s)(g_s(Y_s^{m-1},Z_s^{m-1})-\tilde g_s)*dW_s\\&+\int_t^T\psi'(Y_s^n-\tilde Y_s)(h_s(Y_s^{n-1},Z_s^{n-1})-\tilde h_s)\cdot\overleftarrow{dB}_s-\int_t^T\psi'(Y_s^m-\tilde Y_s)(h_s(Y_s^{m-1},Z_s^{m-1})-\tilde h_s)\cdot\overleftarrow{dB}_s\\&+\frac{1}{2}\int_t^T\psi''(Y_s^n-\tilde Y_s)|h_s(Y_s^{n-1},Z_s^{n-1})-\tilde h_s|^2ds-\frac{1}{2}\int_t^T\psi''(Y_s^m-\tilde Y_s)|h_s(Y_s^{m-1},Z_s^{m-1})-\tilde h_s|^2ds\\
&-\int_t^T\psi''(Y_s^n-\tilde Y_s)\langle g_s(Y_s^{n-1},Z_s^{n-1})-\tilde g_s,Z_s^n-\tilde Z_s\rangle ds+\int_t^T\psi''(Y_s^m-\tilde Y_s)\langle g_s(Y_s^{m-1},Z_s^{m-1})-\tilde g_s,Z_s^m-\tilde Z_s\rangle ds.
\end{split}
\end{equation*}
{\red{Noting that by the strict separability condition \textbf{(HO)-(iv)} and the structure of $ \psi$, we get} }
\begin{equation*}\begin{split}&\int_t^T\psi'(Y_s^n-\tilde Y_s)dK_s^{+,n}=\int_t^T\psi'(L_s-\tilde Y_s)dK_s^{+,n}=K_T^{+,n}-K_t^{+,n},\\& \int_t^T\psi'(Y_s^m-\tilde Y_s)dK_s^{+,m}=\int_t^T\psi'(L_s-\tilde Y_s)dK_s^{+,m}=K_T^{+,m}-K_t^{+,m},\end{split}\end{equation*}
and \begin{equation*}\begin{split}&\int_t^T\psi'(Y_s^n-\tilde Y_s)dK^{-,n}_s=\int_t^T\psi'(U_s-\tilde Y_s)dK^{-,n}_s=0\,,
\\&\int_t^T\psi'(Y_s^m-\tilde Y_s)dK^{-,m}_s=\int_t^T\psi'(U_s-\tilde Y_s)dK^{-,m}_s=0\,.\end{split}\end{equation*}
Therefore,
\begin{equation*}
\begin{split}
&|K_T^{+,n}-K_t^{+,n}-(K_T^{+,m}-K_t^{+,m})|\\\leq&\,|\psi(Y_t^n-\tilde Y_t)-\psi(Y_t^m-\tilde Y_t)|+\left|\int_t^T\big(\psi'(Y_s^n-\tilde Y_s)(Z_s^n-\tilde Z_s)-\psi'(Y_s^m-\tilde Y_s)(Z_s^m-\tilde Z_s)\big)dW_s\right|\\&+\frac{1}{2}\int_t^T\big|\psi''(Y_s^n-\tilde Y_s)|Z_s^n-\tilde Z_s|^2-\psi''(Y_s^m-\tilde Y_s)|Z_s^m-\tilde Z_s|^2\big|ds\\&+\int_t^T\left|\psi'(Y_s^n-\tilde Y_s)(f_s(Y_s^{n-1},Z_s^{n-1})-\tilde f_s)-\psi'(Y_s^m-\tilde Y_s)(f_s(Y_s^{m-1},Z_s^{m-1})-\tilde f_s)\right|ds\\&+\frac{1}{2}\left|\int_t^T\left(\psi'(Y_s^n-\tilde Y_s)(g_s(Y_s^{n-1},Z_s^{n-1})-\tilde g_s)-\psi'(Y_s^m-\tilde Y_s)(g_s(Y_s^{m-1},Z_s^{m-1})-\tilde g_s)\right)*dW_s\right|\\&+\left|\int_t^T\left(\psi'(Y_s^n-\tilde Y_s)(h_s(Y_s^{n-1},Z_s^{n-1})-\tilde h_s)-\psi'(Y_s^m-\tilde Y_s)(h_s(Y_s^{m-1},Z_s^{m-1})-\tilde h_s)\right)\cdot\overleftarrow{dB}_s\right|\\&+\frac{1}{2}\int_t^T\left|\psi''(Y_s^n-\tilde Y_s)|h_s(Y_s^{n-1},Z_s^{n-1})-\tilde h_s|^2-\psi''(Y_s^m-\tilde Y_s)|h_s(Y_s^{m-1},Z_s^{m-1})-\tilde h_s|^2\right|ds\\&+\frac{1}{2}\int_t^T\left|\psi''(Y_s^n-\tilde Y_s)\langle g_s(Y_s^{n-1},Z_s^{n-1})-\tilde g_s,Z_s^n-\tilde Z_s\rangle-\psi''(Y_s^m-\tilde Y_s)\langle g_s(Y_s^{m-1},Z_s^{m-1})-\tilde g_s,Z_s^m-\tilde Z_s\rangle\right|ds,
\end{split}
\end{equation*}
then, taking the supremum in $t$ and the expectation, thanks to the Burkholder-Davies-Gundy inequality, we obtain 
\begin{equation*}
\begin{split}
&\E\E^m\left [\sup_{t\in [0,T]}\left |K_T^{+,n}-K_t^{+,n}-(K_T^{+,m}-K_t^{+,m})\right|\right]\\\leq&\,\E\E^m \left[\sup_{t\in[0,T]}\left |\psi(Y_t^n-\tilde Y_t)-\psi(Y_t^m-\tilde Y_t)\right|+\left(\int_0^T\left|\psi'(Y_s^n-\tilde Y_s)(Z_s^n-\tilde Z_s)-\psi'(Y_s^m-\tilde Z_s)(Z_s^m-\tilde Z_s)\right|^2ds\right)^{1/2}\right]\\&\ +\E\E^m \left[\int_0^T\left|\psi''(Y_s^n-\tilde Y_s)|Z_s^n-\tilde Z_s|^2-\psi''(Y_s^m-\tilde Y_s)|Z_s^m-\tilde Z_s|^2\right|ds\right]\\&\ +\E\E^m \left[\int_0^T\left|\psi'(Y_s^n-\tilde Y_s)(f_s(Y_s^{n-1},Z_s^{n-1})-\tilde f_s)-\psi'(Y_s^m-\tilde Y_s)(f_s(Y_s^{m-1},Z_s^{m-1})-\tilde f_s)\right|ds\right]\\&\  +\E\E^m \left[\left(\int_0^T\left(\psi'(Y_s^n-\tilde Y_s)(g_s(Y_s^{n-1},Z_s^{n-1})-\tilde g_s)-\psi'(Y_s^m-\tilde Y_s)(g_s(Y_s^{m-1},Z_s^{m-1})-\tilde g_s)\right)^2\, ds \right)^{1/2}\right]\\&\ +\E\E^m \left[\left(\int_0^T\left|\psi'(Y_s^n)h_s(Y_s^{n-1},Z_s^{n-1})-\psi'(Y_s^m)h_s(Y_s^{m-1},Z_s^{m-1})\right|^2\, ds\right)^{1/2}\right]\\&\ +\E\E^m\left[\int_0^T\left|\psi''(Y_s^n-\tilde Y_s)\left|h_s(Y_s^{n-1},Z_s^{n-1})-\tilde h_s\right|^2-\psi''(Y_s^m-\tilde Y_s)\left|h_s(Y_s^{m-1},Z_s^{m-1})-\tilde h_s\right|^2\right|ds\right]\\&\ +\E\E^m\left[\int_0^T\left|\psi''(Y_s^n-\tilde Y_s)\left\langle g_s(Y_s^{n-1},Z_s^{n-1})-\tilde g_s,Z_s^n-\tilde Z_s\right\rangle-\psi''(Y_s^m-\tilde Y_s)\left\langle g_s(Y_s^{m-1},Z_s^{m-1})-\tilde g_s,Z_s^m-\tilde Z_s\right\rangle\right|ds\right].
\end{split}
\end{equation*}
By extracting a subsequence if necessary, we can assume that $\sup_{t\in [0,T]}|Y^n_t -Y_t|$ tends to $0$ almost-everywhere, this ensures that all the terms on the right hand side of the previous inequality tend to $0$ as $n,m\rightarrow +\infty$. To see it, let us study  the second term: 
\begin{equation*}\begin{split}
&\E\E^m \left[\int_0^T\left|\psi''(Y_s^n-\tilde Y_s)|Z_s^n-\tilde Z_s|^2-\psi''(Y_s-\tilde Y_s)|Z_s-\tilde Z_s|^2\right|ds\right]\\\leq&\ \E\E^m \left[\int_0^T\left|\psi''(Y_s^n-\tilde Y_s)\right|\left||Z_s^n-\tilde Z_s|^2-|Z_s-\tilde Z_s|^2\right|\,ds\right]\\&\,+\E\E^m \left[ \int_0^T\left |\psi''(Y_s-\tilde Y_s)-\psi''(Y_s^n-\tilde Y_s)\right|\left|Z_s-\tilde Z_s\right|^2 \, ds\right].
\end{split}\end{equation*}
The first term of the right member tends to $0$ since $\psi''$ is bounded and the second by use of the dominated convergence theorem. Repeating these kinds of arguments, we get that, for a subsequence,  $K^{+,n}$ converges uniformly on $t$ in $L^1$ to an increasing continuous process $K^+$.   In the same way we have the convergence of $K^{-,n}$ to an increasing continuous process $K^-$.\\
The fact that $K^+$ and $K^-$ satisfy the minimal Skohorod condition can be proven as in the proof of Lemma \ref{convergence:YZK}.
Now passing to the limit in \eqref{Appx1} for a well-chosen subsequence we get that $(Y,Z,K^+ ,K^-)$ solves the DRBSDE
\begin{equation*}\begin{split}
Y_t&=\xi+\int_t^Tf_s(W_s,Y_s,Z_s)ds-\frac{1}{2}\int_t^Tg_s(Y_s,Z_s)*dW_s+\int_t^Th_s(W_s,Y_s,Z_s)\cdot\overleftarrow{dB}_s
\\&\quad-\int_t^TZ_sdW_s+K_T^{+}-K_t^{+}-K_T^{-}+K_t^{-}\,.
\end{split}\end{equation*}
We end this proof by establishing that this solution may be viewed as the solution of a linear DRBSDE, so that all the results of the previous section apply. More precisely, let $(\bY ,\bZ ,\bK^+ ,\bK^- )$ be a solution of the linear DRBSDE (with same obstacles $U$ and $L$):
\begin{equation*}\begin{split}
\bY_t&=\xi+\int_t^Tf_s(W_s,Y_s,Z_s)ds-\frac{1}{2}\int_t^Tg_s(Y_s,Z_s)*dW_s+\int_t^Th_s(W_s,Y_s,Z_s)\cdot\overleftarrow{dB}_s\\&\quad-\int_t^T\bZ_sdW_s
+\bK_T^{+}-\bK_t^{+}-\bK_T^{-}+\bK_t^{-}\,.\end{split}\end{equation*}
Then, $Y_t -\bY_t =-\int_t^T (Z_s -\bZ_s)dW_s + (K_T^+ -\bK_T^+)-(K_t^+ -\bK_t^+)-(K_T^- -\bK_T^-)+(K_t^- -\bK_t^-)$ hence is a $\mathcal{G}_t$-semi-martingale. Now applying the It\^o-Tanaka formula to $((Y_t -\bY_t)^+)^2$, we obtain by similar arguments to those used in the proof of Theorem 1.3 in \cite{Hamadene-Hasani} that $Y=\bY$, hence as a consequence of Doob-Meyer's theorem, $Z=\bZ$ and $K^+-K^- =\bK^+-\bK^-$. Applying one more time It\^o's formula to $\psi (Y_t -\tilde{Y}_t)=\psi (\bY_t -\tilde{Y}_t)$, we immediately get $K^+=\bK^+$ and $K^- =\bK^-$. $\hspace{5.2cm}\Box$\\

Then we can do a similar argument as in the proof of Theorem \ref{maintheorem} in the linear case to get the result on $u$. Precisely, the above proof provides that the Picard sequence $(Y^n, Z^n)$ is a Cauchy sequence, then using the relation between $(u^n, \nabla u^n)$ and $(Y^n, Z^n)$, we obtain that the corresponding Picard sequence $u^n$ is a Cauchy sequence in $\mathcal{H}_T$ and hence has a limit $u$ in this space.

\subsection{Comparison theorem}
We can also establish the comparison theorem for the solution of our two-obstacle problem. 
\begin{theorem}
\label{comparaison}
Let $\Psi ^2,f^2,\overline{v}^2,\underline{v}^2$ be similar to $\Psi^1,f^1,\overline{v}^1,\underline{v}^1$ and
let $\left( u^1,\nu^{1,+},\nu^{1,-} \right) $ be the solution of the two-obstacle problem
corresponding to $\left( \Psi^1,f^1,g,h,\overline{v}^1,\underline{v}^1\right) $ and $\left( u^2,\nu
^{2,+},\nu^{2,-}\right)$ be the solution corresponding to $\left( \Psi^{2
},f^{2},g,h,\overline{v}^{2},\underline{v}^{2}\right) .$ Assume that the following conditions
hold

\begin{description}
\item[$\left( i\right) $] $\Psi^1 \leq \Psi ^2,\ \ dx\otimes d\mathbb{P}$
-a.e.,

\item[$\left( ii\right) $] $f^1\left( u^1,\nabla u^1\right) \leq f^2\left(
u^1,\nabla u^1\right) ,\ \ dtdx\otimes \mathbb{P}$ -a.e.,

\item[$\left( iii\right) $] $\overline{v}^1\leq \overline{v}^{2}\ and\ \ \underline{v}^1\leq\underline{v}^{2},\ \ dtdx\otimes \mathbb{P}$ -a.e..
\end{description}

Then one has $u^1\leq u^{2},\ \ dtdx\otimes \mathbb{P}-$a.e..
\end{theorem}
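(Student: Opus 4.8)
The plan is to reduce the statement to a comparison at the level of the associated doubly reflected BDSDEs. Recall that, for $i=1,2$, the solution $u^i$ is represented by $Y^i_t = u^i(t,W_t)$, $Z^i_t = \nabla u^i(t,W_t)$, $L^i_t = \underline v^i(t,W_t)$, $U^i_t = \overline v^i(t,W_t)$, together with the reflecting processes $K^{i,+},K^{i,-}$ solving the DRBDSDE \eqref{DRBDSDE}. Since the law of $W_t$ under $\mathbb{P}^m$ is, by translation invariance, absolutely continuous with respect to Lebesgue measure, the inequality $u^1\le u^2$, $dtdx\otimes d\mathbb{P}$-a.e., is equivalent to $Y^1_t\le Y^2_t$ for a.e.\ $t$, $\mathbb{P}\otimes\mathbb{P}^m$-a.e.; so it suffices to prove the latter, and similarly hypotheses $(i)$--$(ii)$ translate into $(\Psi^1-\Psi^2)^+(W_T)=0$ and $\big(f^1_s-f^2_s\big)(W_s,Y^1_s,Z^1_s)\le 0$, $dt\otimes\mathbb{P}\otimes\mathbb{P}^m$-a.e.

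Set $\Delta Y = Y^1-Y^2$, $\Delta Z = Z^1-Z^2$, $\Delta K^\pm = K^{1,\pm}-K^{2,\pm}$, and write the backward equation for $\Delta Y$ by subtracting the two copies of \eqref{DRBDSDE}. I would then apply the It\^o--Tanaka formula to $\big((\Delta Y_t)^+\big)^2$ — of the same type as the formula established in the Appendix and used in the proof of Theorem \ref{Itoformula}, compare also Theorem 1.3 in \cite{Hamadene-Hasani}. The terminal term $\big((\Psi^1-\Psi^2)^+(W_T)\big)^2$ vanishes. The crucial point is the sign of the four terms involving $dK^{i,\pm}$. The measure $dK^{1,+}$ is carried by $\{s:\ Y^1_s = L^1_s\}$, where $Y^1_s = \underline v^1_s \le \underline v^2_s \le Y^2_s$ (using $(iii)$ and $Y^2\ge L^2$), so $(\Delta Y_s)^+ = 0$ there and $\int (\Delta Y_s)^+\,dK^{1,+}_s = 0$; symmetrically $dK^{2,-}$ is carried by $\{s:\ Y^2_s = U^2_s\}$, where $Y^1_s\le \overline v^1_s\le \overline v^2_s = Y^2_s$, so $\int (\Delta Y_s)^+\,dK^{2,-}_s = 0$; and the two remaining contributions $-\int (\Delta Y_s)^+\,dK^{2,+}_s$ and $-\int (\Delta Y_s)^+\,dK^{1,-}_s$ are $\le 0$ since $(\Delta Y_s)^+\ge 0$ and $dK^{i,\pm}\ge 0$. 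Thus all four reflecting terms are non-positive, and this is exactly where the ordering hypothesis $(iii)$ on the obstacles is used.

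For the driver, I would split $f^1_s(W_s,Y^1_s,Z^1_s)-f^2_s(W_s,Y^2_s,Z^2_s)$ as $\big(f^1_s-f^2_s\big)(W_s,Y^1_s,Z^1_s) + \big(f^2_s(W_s,Y^1_s,Z^1_s)-f^2_s(W_s,Y^2_s,Z^2_s)\big)$; after multiplication by $(\Delta Y_s)^+\ge 0$ the first term is $\le 0$ by $(ii)$, and the second, together with the $g$- and $h$-contributions, is controlled by the Lipschitz bounds \textbf{(H)(i)--(iii)} and Young's inequality. The resulting $\mathbf 1_{\{\Delta Y_s>0\}}|\Delta Z_s|^2$ terms on the right-hand side are absorbed by the $\int_t^T\mathbf 1_{\{\Delta Y_s>0\}}|\Delta Z_s|^2\,ds$ term on the left thanks to the contraction condition \textbf{(H)(iv)}, exactly as in Corollary \ref{FK2}. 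Taking $\mathbb{E}\mathbb{E}^m$ and using the Burkholder--Davis--Gundy inequality to discard the martingale parts (the $\ast\,dW$ term being centred, as noted in the Remark after \eqref{divergence:term}), one is left with $\mathbb{E}\mathbb{E}^m\big[((\Delta Y_t)^+)^2\big]\le C\int_t^T\mathbb{E}\mathbb{E}^m\big[((\Delta Y_s)^+)^2\big]\,ds$, and Gronwall's lemma gives $(\Delta Y_t)^+=0$ for every $t$, i.e.\ $Y^1\le Y^2$, hence $u^1\le u^2$.

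I expect the main obstacle to be the rigorous justification of the It\^o--Tanaka formula for $\big((\Delta Y_t)^+\big)^2$ in this divergence-form, doubly stochastic setting — in particular the treatment of the mixed forward/backward term $\int g_s\ast dW_s$ and the fact that $(\Delta Y)^+$ is built from a difference of processes that are c\`adl\`ag before the continuity is recovered — together with the careful bookkeeping of the four reflecting measures; once these are in place, only the contraction condition \textbf{(H)(iv)} is needed to close the estimate, and everything else is standard.
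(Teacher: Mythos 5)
Your proposal is correct and follows essentially the same route as the paper: the paper's proof applies the It\^o formula of Lemma \ref{itofordifference} (in the form of Remark \ref{remarkitoforpositivepart}) to $\|(u^1-u^2)^+\|^2$, kills/signs the four measure terms $\nu^{i,\pm}$ exactly as you do for $dK^{i,\pm}$ via the Skorokhod conditions and the ordering of the obstacles, and closes with the Lipschitz bounds and Gronwall. Working at the level of the DRBDSDE quadruple $(Y,Z,K^+,K^-)$ rather than of $(u,\nu^+,\nu^-)$ is only a change of notation here, since the paper's It\^o formula is itself established through that probabilistic representation.
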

\begin{proof}
We put $\hat{u}=u^1-u^2$, $\hat{\Psi}=\Psi^1-\Psi^2$, $\hat{f}_t=f^1(t,u^1_t,\nabla
u^1_t)-f^2(t,u^2_t,\nabla u^2_t)$, $\hat{g}_t=g(t,u^1_t,\nabla
u^1_t)-g(t,u^2_t,\nabla u^2_t)$ and $\hat{h}_t=h(t,u^1_t,\nabla
u_t)-h(t,u^2_t,\nabla u^2_t)$.

One starts with the following version of It\^o's formula (see Lemma \ref{itolowerobstacle} and Remark \ref{remarkitoforpositivepart}), written with some
quasicontinuous versions $\tilde{u}^1,\tilde{u}^2$ of the
solutions $u^1,u^{2}$ in the terms involving the regular measures $\nu^{1,+},\nu^{1,-}
,\nu ^{2,+ },\nu^{2,-},$
\begin{equation*}
\begin{split}
\mathbb{E}\left\|\hat{u}_t^{+}\right\|
^{2}+\mathbb{E}\int_{t}^{T} \big\|\nabla\hat{u}_s^{+} \big\|^2 \, ds
&=\mathbb{E}\big\|\hat{\Psi}^{+}\big\|^{2}
-2\mathbb{E}\int_{t}^{T}\big( \hat{u}_s^{+}, \hat{f}_s\big) ds+2\mathbb{E}\int_{t}^{T}\left(\nabla\hat{u}_s^+,\hat{g}_s\right)ds\\
&+\mathbb{E}\int_{t}^{T}\big\|\1_{\{\hat{u}_s>0\}}\big|\hat{h}_{s}\big|\big\|^{2}\,ds
+2\mathbb{E}\int_{t}^{T}\int_{\mathbb{R}^{d}}\hat{u}_s^{+}\left( x\right)
\left( \nu^{1,+} -\nu ^{2,+ }\right) \left(ds,dx\right)\\&-2\mathbb{E}\int_{t}^{T}\int_{\mathbb{R}^{d}}\hat{u}_s^{+}\left( x\right)
\left( \nu^{1,-} -\nu ^{2,- }\right) \left(ds,dx\right).
\end{split}
\end{equation*}
Remark that on $\{u^1\leq u^2\}$, $(u^1-u^2)^+=0$ and on $\{u^1>u^2\}$, $\nu^{1,+}(ds,dx)=0$, 
then
\begin{equation*}
2\mathbb{E}\int_{t}^{T}\int_{\mathbb{R}^{d}}\hat{u}_s^{+}\left( x\right)
\left( \nu^{1,+} -\nu ^{2,+ }\right) \left(ds,dx\right)\leq 0.
\end{equation*}
Similarly, on $\{u^1\leq u^2\}$, $(u^1-u^2)^+=0$ and on $\{u^1>u^2\}$, $\nu^{2,-}(ds,dx)=0$, 
then
\begin{equation*}
2\mathbb{E}\int_{t}^{T}\int_{\mathbb{R}^{d}}\hat{u}_s^{+}\left( x\right)
\left( \nu^{1,-} -\nu ^{2,- }\right) \left(ds,dx\right)\geq 0.
\end{equation*}
And then one concludes the proof by Gronwall's lemma.
\end{proof}

\section{Appendix}
The aim of this Appendix is to prove the It\^o's formula in the one-obstacle case. To this end, we are given $\xi\in L^2 (\R^d )$ and predictable (linear) coefficients 
$f=f^0$, $g=g^0$, $h=h^0$ satisfying Assumption {\bf (HD2)}.
\begin{lemma}\label{itolowerobstacle}
Let $\Phi$ be the function satisfying the conditions in Theorem \ref{Itoformula} and $(Y,Z,K)$ be the solution of the lower obstacle problem for BDSDE: 
\begin{equation}\label{BDSDElowerobstacle}\left\{\begin{split}
Y_t&=\xi+\int_t^Tf_sds-\frac{1}{2}\int_t^Tg_s*dW_s+\int_t^Th_s\cdot\overleftarrow{dB}_s-\int_t^TZ_sdW_s+K_T-K_t\,,\\
Y_t &\geq L_t\,,\\
\int_0^T& (Y_t -L_t )dK_t =0\,.
\end{split}\right.\end{equation}
Then, the following It\^o's formula holds almost surely, for any $t\in[0,T]$, 
\begin{equation*}\begin{split}
\Phi(t,Y_t)=&\,\Phi(T,Y_T)-\int_t^T\frac{\partial\Phi}{\partial s}(s,Y_s)ds+\int_t^T\Phi'(s,Y_s)f_sds-\frac{1}{2}\int_t^T\Phi'(s,Y_s)g_s*dW_s\\
&+\int_t^T\Phi'(s,Y_s)h_s\cdot\overleftarrow{dB}_s-\int_t^T\Phi'(s,Y_s)Z_sdW_s+\frac{1}{2}\int_t^T\Phi''(s,Y_s)|h_s|^2ds\\
&-\int_t^T\Phi''(s,Y_s)\langle g_s,Z_s\rangle ds-\frac{1}{2}\int_t^T\Phi''(s,Y_s)|Z_s|^2ds+\int_t^T\Phi'(s,Y_s)dK_s\,.
\end{split}\end{equation*}
\end{lemma}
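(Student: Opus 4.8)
\textbf{Proof proposal for Lemma \ref{itolowerobstacle}.}

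The plan is to obtain the It\^o formula for $\Phi(t,Y_t)$ from the already--established It\^o formulae for the penalized BDSDEs, by a limiting argument entirely parallel to the one used in the proof of Theorem \ref{Itoformula}. First I would set up the penalization: for $n\in\N$ let $(Y^n,Z^n,K^{n})$ be defined (as in Subsection \ref{section:penalization}, but now with a \emph{lower} obstacle being penalized, so there is no reflection term at all and $(Y^n,Z^n)$ solves an ordinary linear BDSDE with driver $f_s + n(L_s-Y_s^n)^+$) or, better adapted to what we actually need, take the classical penalization of the reflected BDSDE \eqref{BDSDElowerobstacle}, namely $Y^n_t = \xi + \int_t^T f_s\,ds + n\int_t^T (Y^n_s - L_s)^-\,ds - \tfrac12\int_t^T g_s * dW_s + \int_t^T h_s\cdot\overleftarrow{dB}_s - \int_t^T Z^n_s\,dW_s$, and set $K^n_t := n\int_0^t (Y^n_s-L_s)^-\,ds$. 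For each fixed $n$, $Y^n_t = z^n(t,W_t)$, $Z^n_t=\nabla z^n(t,W_t)$ where $z^n$ solves a linear SPDE covered by Theorem \ref{FK}, and the doubly stochastic It\^o formula \eqref{Ito:u} together with the classical time--space It\^o formula applied to $\Phi$ along $(t,W_t)$ gives, $\mathbb{P}$-a.s. for all $t$, exactly the claimed identity with $(Y^n,Z^n,K^n)$ in place of $(Y,Z,K)$; this is where one uses $\Phi'(t,0)=0$, $|\Phi''|\le C$ and $|\partial_t\Phi|\le C(|x|^2\vee 1)$ to make every term integrable given the $L^2$ bounds on $(Y^n,Z^n)$.

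Next I would collect the standard convergence facts for the penalization scheme of a reflected BDSDE: $Y^n \uparrow Y$ (or the appropriate monotone direction), $\E\E^m[\sup_{t}|Y^n_t-Y_t|^2] \to 0$, $\E\E^m\int_0^T |Z^n_t - Z_t|^2\,dt \to 0$, and $K^n_t \to K_t$ uniformly in $t$ in $L^1$ (hence, along a subsequence, $dK^n \Rightarrow dK$ weakly a.s.), together with uniform $L^2$ bounds on $Y^n,Z^n,K^n_T$. These are the analogues of Lemma \ref{penalization:estimate1}, Lemma \ref{contK+} and Lemma \ref{convergence:YZK}; since here there is only one obstacle and the ``problematic'' second--reflection term is absent, the argument is strictly simpler than in the two--obstacle case, and in fact is exactly the content of the one--obstacle theory recalled from \cite{MatoussiStoica}. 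With these convergences in hand, I pass to the limit term by term in the penalized It\^o formula. The terms $\Phi(t,Y^n_t)$, $\Phi(T,Y^n_T)=\Phi(T,\xi)$, $\int_t^T \partial_s\Phi(s,Y^n_s)\,ds$, $\int_t^T\Phi'(s,Y^n_s)f_s\,ds$, the two stochastic integrals $\int \Phi'(s,Y^n_s)g_s * dW_s$ and $\int\Phi'(s,Y^n_s)h_s\cdot\overleftarrow{dB}_s$, and $\int_t^T\Phi''(s,Y^n_s)|h_s|^2\,ds$ all converge by dominated convergence / It\^o isometry using $|\Phi''|\le C$, $|\Phi'(s,y)|\le C|y|$ and $\sup_t|Y^n_t-Y_t|\to 0$ (for the Brownian integrals one uses the Burkholder--Davis--Gundy inequality to upgrade $L^2$ convergence of the integrands to convergence of the integrals).

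The delicate terms are the three involving $Z^n$ quadratically or bilinearly --- $\int_t^T\Phi''(s,Y^n_s)|Z^n_s|^2\,ds$ and $\int_t^T\Phi''(s,Y^n_s)\langle g_s,Z^n_s\rangle\,ds$ and $\int_t^T\Phi'(s,Y^n_s)Z^n_s\,dW_s$ --- and the measure term $\int_t^T\Phi'(s,Y^n_s)\,dK^n_s$; this is the main obstacle, just as in Theorem \ref{Itoformula}. For the quadratic $Z^n$ term I would localize by the stopping times $\tau_N$ (here defined via $K$ itself, or simply via $\int_0^t|Z_s|^2 ds$), show $\int_0^{t\wedge\tau_N}\Phi''(s,Y^n_s)|Z^n_s|^2\,ds \to \int_0^{t\wedge\tau_N}\Phi''(s,Y_s)|Z_s|^2\,ds$ in $L^1$ by splitting $|Z^n|^2 - |Z|^2 = \langle Z^n-Z, Z^n+Z\rangle$ and using $L^2$-convergence of $Z^n$ together with boundedness of $\Phi''$ and continuity of $s\mapsto\Phi''(s,Y_s)$, then extract a diagonal subsequence giving a.s. convergence for all $t$ after letting $N\to\infty$ (using $\tau_N = T$ a.s. for $N$ large); the bilinear $\langle g_s,Z^n_s\rangle$ term and the stochastic integral $\int\Phi'(s,Y^n_s)Z^n_s\,dW_s$ are handled the same way (BDG for the latter). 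For the measure term I write $\int_t^T\Phi'(s,Y^n_s)\,dK^n_s - \int_t^T\Phi'(s,Y_s)\,dK_s = \int_t^T(\Phi'(s,Y^n_s)-\Phi'(s,Y_s))\,dK^n_s + \int_t^T\Phi'(s,Y_s)\,d(K^n_s-K_s)$; the first piece is bounded by $C\sup_{s}|Y^n_s-Y_s|\,K^n_T\to 0$ in $L^1$ (Cauchy--Schwarz plus the uniform $L^2$ bound on $K^n_T$), and the second tends to $0$ a.s. along the subsequence because $dK^n\Rightarrow dK$ weakly and $s\mapsto\Phi'(s,Y_s)$ is bounded continuous. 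Passing to the limit along the common subsequence then yields the asserted identity a.s. for each fixed $t$; continuity in $t$ of both sides (all integrands being, after the work above, integrable processes, and $Y$ continuous) upgrades this to ``a.s., for all $t\in[0,T]$'', which is the statement.
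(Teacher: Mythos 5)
Your proposal follows essentially the same route as the paper: penalize the reflected BDSDE with $K^n_t=n\int_0^t(Y^n_s-L_s)^-\,ds$, apply the doubly stochastic It\^o formula to the penalized (obstacle-free) equation, and pass to the limit using the strong convergence of $(Y^n,Z^n,K^n)$ to $(Y,Z,K)$ supplied by the one-obstacle theory of \cite{MatoussiStoica}. The paper's own proof simply asserts that ``all the terms converge,'' whereas you spell out the term-by-term limit arguments (localization for the quadratic $Z^n$ terms, the splitting of the $dK^n$ term, BDG for the stochastic integrals), which is a correct and welcome elaboration of the same argument.
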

\begin{proof}
We consider the following penalization equation
\begin{equation}
Y_t^n=\xi+\int_t^Tf_sds-\frac{1}{2}\int_t^Tg_s*dW_s+\int_t^Th_s\cdot\overleftarrow{dB}_s-\int_t^TZ_{i,s}^ndW_s^i+\int_t^Tn(Y_s^n-L_s)^-ds.
\end{equation}
Using the same arguments as in the proof of  Lemma 4.3 in \cite{Stoica} and the It\^o formula for doubly stochastic It\^o processes, see Lemma 1.3 in \cite{PardouxPeng94}, we get that, for all $t\in[0,T]$, almost surely, 
\begin{equation*}\begin{split}
\Phi(t,Y^n_t)=&\,\Phi(T,Y^n_T)-\int_t^T\frac{\partial\Phi}{\partial s}(s,Y^n_s)ds+\int_t^T\Phi'(s,Y^n_s)f_sds-\frac{1}{2}\int_t^T\Phi'(s,Y^n_s)g_s*dW_s\\
&+\int_t^T\Phi'(s,Y^n_s)h_s\cdot\overleftarrow{dB}_s-\int_t^T\Phi'(s,Y^n_s)Z^n_sdW_s+\frac{1}{2}\int_t^T\Phi''(s,Y_s^n)|h_s|^2ds\\
&-\int_t^T\Phi''(s,Y_s^n)\langle g_s,Z^n_s\rangle ds-\frac{1}{2}\int_t^T\Phi''(s,Y_s^n)|Z^n_s|^2ds+\int_t^T\Phi'(s,Y^n_s)n(Y^n_s-L_s)^-ds.
\end{split}\end{equation*}
From \cite{MatoussiStoica}, we know that the triple $(Y^n,Z^n,K^n)$ strongly converges to $(Y,Z,K)$ which is the solution of the lower obstacle problem for SPDE \eqref{SPDE1}. Hence, all the terms in the above equality converge. We get the desired formula by taking limits. 
\end{proof}

\begin{lemma}\label{comparisonthmlinearRBDSDE}(Comparison theorem for the linear reflected BDSDEs)
Let  $\xi\in L^2 (\R^d )$ and predictable  coefficients 
$f$, $g$, $h$ satisfying Assumption {\bf (HD2)}. Let $(Y,Z,K)$ be the solution of the reflected BDSDEs \eqref{BDSDElowerobstacle}. Let $\xi'\in L^2 (\R^d )$ and $f'$ another predictable coefficient satisfying {\bf (HD2)}. Let  $(Y',Z',K')$ be the solution of the reflected BDSDEs with coefficients $f'$, $g$, $h$, terminal value $\xi'$ and same lower obstacle $L$. 
If \begin{enumerate}
      \item $\xi\leq\xi', \ \mathbb{P}-a.s.$, 
      \item $f\leq f',\ dt\otimes\mathbb{P}-a.e.$.
    \end{enumerate}
Then we have $\mathbb{P}$-almost surely, $Y_t\leq Y'_t$ for all $t\in [0,T]$ and $dK_t\geq dK'_t$.
\end{lemma}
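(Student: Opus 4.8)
The plan is to prove the comparison result by a penalization argument that reduces to the comparison theorem for penalized (non-reflected) BDSDEs, which in turn is classical. First I would introduce the penalized equations
$$Y_t^n=\xi+\int_t^Tf_s\,ds-\tfrac12\int_t^Tg_s*dW_s+\int_t^Th_s\cdot\overleftarrow{dB}_s-\int_t^TZ_s^n\,dW_s+n\int_t^T(Y_s^n-L_s)^-\,ds$$
and the analogous $(Y'^n,Z'^n)$ with $\xi'$ and $f'$, whose solutions converge strongly to $(Y,Z,K)$ and $(Y',Z',K')$ respectively, with $K_t^n:=n\int_0^t(Y_s^n-L_s)^-\,ds\to K_t$ (and likewise for the primed objects), by the result of \cite{MatoussiStoica} quoted in Lemma \ref{itolowerobstacle}.

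Next I would establish $Y^n\le Y'^n$ for each fixed $n$. Since the driver of the penalized BDSDE for $Y^n$ is $b(t,y):=f_t+n(y-L_t)^-$ and for $Y'^n$ is $b'(t,y):=f'_t+n(y-L_t)^-$, we have $b(t,y)\le b'(t,y)$ pointwise (using $f\le f'$) and $y\mapsto n(y-L_t)^-$ is Lipschitz, so the standard comparison theorem for BDSDEs (Pardoux--Peng \cite{PardouxPeng94}, adapted to the doubly-stochastic setting with the divergence-type term $g*dW$, which contributes only to the $dW$-martingale part and is handled exactly as in \cite{DenisStoica,MatoussiStoica}) gives $Y_t^n\le Y_t'^n$ for all $t$, $\mathbb P$-a.s. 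Concretely this is seen by applying It\^o's formula to $((Y_t^n-Y_t'^n)^+)^2$: the terminal term vanishes since $\xi\le\xi'$, the difference of drivers splits into a nonpositive contribution from $f-f'$ on $\{Y^n>Y'^n\}$ and a contribution $n((Y^n-L)^- -(Y'^n-L)^-)(Y^n-Y'^n)^+\le 0$ by monotonicity of $y\mapsto-(y-L)^-$, the $g$-term is absorbed using Assumption \textbf{(H)(iv)} (the contraction condition $\alpha+\beta^2/2<1/2$) together with the $|h|^2$-term, and Gronwall then forces $\mathbb E\|(Y^n-Y'^n)^+\|^2\equiv0$. Passing to the limit $n\to\infty$ yields $Y_t\le Y'_t$ for all $t$, $\mathbb P$-a.s.

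For the statement on the increasing processes, I would argue as follows. From $Y^n\le Y'^n$ we get $(Y^n_s-L_s)^-\ge(Y'^n_s-L_s)^-$, hence $dK^n_s=n(Y^n_s-L_s)^-\,ds\ge n(Y'^n_s-L_s)^-\,ds=dK'^n_s$ as measures on $[0,T]$, for every $n$. Letting $n\to\infty$ and using that $dK^n\rightharpoonup dK$ and $dK'^n\rightharpoonup dK'$ weakly (the convergence of $K^n\to K$ uniformly in $t$, established in \cite{MatoussiStoica}), the inequality is preserved in the limit, so $dK_t\ge dK'_t$.

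The main obstacle is the first reduction step, namely carefully justifying the comparison theorem for the penalized \emph{doubly stochastic} BDSDEs with the non-standard stochastic integral $g*dW$ (which is neither a forward nor a backward It\^o integral in the usual sense, but the symmetric Stratonovich-type object of \eqref{representation:divg}); one must check that in the It\^o expansion of $((Y^n-Y'^n)^+)^2$ the cross terms coming from $g*dW$ and from $\int Z\,dW$ can be controlled using \textbf{(H)(iv)}, exactly in the spirit of Corollary \ref{FK2}. Everything else --- the monotonicity of the penalization term, Gronwall, and the stability of the order under weak limits of measures --- is routine.
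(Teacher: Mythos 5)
Your proposal is correct and follows essentially the same route as the paper: penalize both equations, observe that the effective drivers $f_t+n(y-L_t)^-$ and $f'_t+n(y-L_t)^-$ are ordered, apply It\^o's formula to $((Y^n_t-Y'^n_t)^+)^2$ with Gronwall to get $Y^n\le Y'^n$, deduce $n(Y^n-L)^-\ge n(Y'^n-L)^-$, and pass to the limit for both $Y\le Y'$ and $dK\ge dK'$. The only remark is that the ``main obstacle'' you flag is actually vacuous here: since this is the linear comparison (the two equations share the \emph{same} coefficients $g$ and $h$, which do not depend on $(y,z)$), the $g*dW$ and $\overleftarrow{dB}$ terms cancel exactly in the difference $Y^n-Y'^n$, so no appeal to the contraction condition \textbf{(H)(iv)} is needed.
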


\begin{proof}
We consider the following two penalized equations:
\begin{equation*}
Y_t^n=\xi+\int_t^Tf_s(W_s)ds-\frac{1}{2}\int_t^Tg_s*dW_s+\int_t^Th_s(W_s)\cdot\overleftarrow{dB}_s-\sum_i\int_t^TZ_{i,s}^ndW_s^i-n\int_t^T(Y_s^n-L_s)^-ds,
\end{equation*}
\begin{equation*}
Y_t^{'n}=\xi'+\int_t^Tf'_s(W_s)ds-\frac{1}{2}\int_t^Tg_s*dW_s+\int_t^Th_s(W_s)\cdot\overleftarrow{dB}_s-\sum_i\int_t^TZ_{i,s}^{'n}dW_s^i-n\int_t^T(Y_s^{'n}-L_s)^-ds.
\end{equation*}
We denote
$$F_t(Y_t^n)=f_t-n(Y_t^n-L_t)^-\quad\mbox{and}\quad F'_t(Y_t^n)=f'_t-n(Y_t^n-L_t)^-,$$
due to assumption 2, we have that $F_t(Y^n_t)\leq F'_t(Y_t^n),\
dt\otimes \mathbb{P}-a.e.$. Therefore, applying It\^o's formula to $\big( (Y_t^n -Y_t^{'n})^+\big)^2 $ and standard arguments as the comparison theorem
for BSDEs (non-reflected), we get that $\forall
t\in[0,T]$, $Y_t^n\leq Y_t^{'n},\ \mathbb{P}-a.s.$, thus
$n(Y_t^n-L_t)^-\geq n(Y_t^{'n}-L_t)^-$, which implies by passing to the limit that 
  $dK_t\geq dK'_t$ for any $t\in[0,T]$.
\end{proof}

\vspace{2mm}
Next we prove the It\^o's formula for the difference between the solutions of two DOSPDEs.\\
We still consider $(u,\nu^+,\nu^-)$ the solution of linear equation as in Subsection \ref{section:penalization}
\begin{equation*}
 \left\{\begin{split}& du(t,x) + \frac{1}{2} \Delta u (t,x)dt  + f(t,x)dt+ \mbox{div} g(t,x)dt +h(t,x)\cdot\overleftarrow{dB}_t +\nu^+ (dt,x)-\nu^- (dt,x)=0,\\
 &\underline{v}(t,x)\leq u(t,x)\leq \overline{v}(t,x),
 \end{split}\right. 
\end{equation*}
and consider another linear equation with adapted coefficients $\bar{f},\bar{g},\bar{h}$ respectively in $L^2 (\Omega\times[0,T]\times\mathbb{R}^d;\mathbb{R})$,  $L^2 (\Omega\times[0,T]\times\mathbb{R}^d;\mathbb{R}^d)$ and $L^2 (\Omega\times[0,T]\times\mathbb{R}^d;\mathbb{R}^{d^1})$ and the obstacles $\underline{o}$ and $\overline{o}$ satisfying Assumption ${\bf (HO)}$. We denote by $(y, \bar{\nu}^+,\bar{\nu}^-)$ the unique solution to the associated DOSPDE with terminal condition $y_T =u_T =\Psi$:
\begin{equation*}
 \left\{\begin{split}& dy(t,x) + \frac{1}{2} \Delta y (t,x)dt  + \bar{f}(t,x)dt+ \mbox{div} \bar{g}(t,x)dt + \bar{h}(t,x)\cdot\overleftarrow{dB}_t +\bar{\nu}^+ (dt,x)-\bar{\nu}^- (dt,x)=0,\\
 &\underline{o}(t,x)\leq y(t,x)\leq \overline{o}(t,x).
 \end{split}\right. 
\end{equation*}

\begin{lemma}\label{itofordifference}
Let $\Phi$ as in Theorem \ref{Itoformula}, then the difference of the two solutions satisfy the following It\^o's formula: $\forall t\in[0,T]$, $\mathbb{P}-$a.s.,
\begin{equation}
\begin{split}
&\int_{\bbR^d}\Phi(t,u_t(x)-y_t(x))dx+\frac12 \int_t^T \Phi''(s,u_s-y_s)|\nabla (u_s-y_s)|^2 ds=-\int_t^T\int_{\bbR^d}\frac{\partial\Phi}{\partial s}(s,u_s-y_s)dxds
\\&+\int_t^T(\Phi'(s,u_s-y_s),f_s-\bar{f}_s)ds-\sum_{i=1}^d\int_t^T\int_{\bbR^d}\Phi''(s,u_s-y_s)\partial_i(u_s-y_s)(g^i_s-\bar{g}^i_s)dxds
\\&+\sum_{j=1}^{d^1}\int_t^T(\Phi'(s,u_s-y_s),h^j_s-\bar{h}^j_s)\overleftarrow{dB}_s^j+\frac{1}{2}\sum_{j=1}^{d^1}\int_t^T\int_{\bbR^d}\Phi''(s,u_s-y_s)(h^j_s-\bar{h}^j_s)^2dxds
\\&+\int_t^T\int_{\bbR^d}\Phi'(s,\tilde{u}_s-\tilde{y}_s)(\nu^+-\bar{\nu}^+)(ds, dx)-\int_t^T\int_{\bbR^d}\Phi'(s,\tilde{u}_s-\tilde{y}_s)(\nu^--\bar{\nu}^-)(ds, dx).
\end{split}
\end{equation}
\end{lemma}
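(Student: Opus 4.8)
The plan is to reduce this to the already-established It\^o formula for a single DOSPDE (Theorem \ref{Itoformula}, or rather its penalized analogue from Lemma \ref{itolowerobstacle}) by working at the level of the penalized approximations and then passing to the limit, exactly in the spirit of the proof of Theorem \ref{Itoformula}. First I would fix the two penalization schemes: let $(Y^n, Z^n, K^{+,n})$ be the penalized solution associated to $(u, \nu^+, \nu^-)$ reflected on the lower obstacle $L_t = \underline{v}(t,W_t)$ and penalized on the upper obstacle $U_t = \overline{v}(t,W_t)$ as in Subsection \ref{section:penalization}, and let $(\bar Y^n, \bar Z^n, \bar K^{+,n})$ be the analogous penalized solution associated to $(y, \bar\nu^+, \bar\nu^-)$ with obstacles $\underline{o}, \overline{o}$. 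Set $K^{-,n}_t = n\int_0^t (Y^n_s - U_s)^+ ds$ and $\bar K^{-,n}_t = n\int_0^t(\bar Y^n_s - \bar U_s)^+ ds$ with $\bar U_t = \overline{o}(t,W_t)$, $\bar L_t = \underline{o}(t,W_t)$.

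The key step is to apply the It\^o formula for the difference of two penalized OSPDEs --- this is the formula from Theorem 6 in \cite{DMZ12} referenced in the remark following Theorem \ref{Itoformula} --- to $\Phi(t, Y^n_t - \bar Y^n_t)$, or equivalently $\int_{\bbR^d}\Phi(t, u^n_t(x) - y^n_t(x))\,dx$ where $u^n, y^n$ are the (quasicontinuous) spatial solutions whose traces are $Y^n, \bar Y^n$. Since both $u^n$ and $y^n$ solve genuine reflected-penalized OSPDEs whose measures $\nu^{+,n}$, $\bar\nu^{+,n}$ are regular (associated to the increasing processes $K^{+,n}$, $\bar K^{+,n}$) and whose penalization terms are the absolutely continuous measures $n(u^n-\overline{v})^+ dt\,dx$, $n(y^n-\overline{o})^+ dt\,dx$, the cited It\^o formula applies and yields, $\mathbb{P}$-a.s., for all $t$,
\begin{equation*}
\begin{split}
&\int_{\bbR^d}\Phi(t,u^n_t-y^n_t)dx+\frac12\int_t^T\Phi''(s,u^n_s-y^n_s)|\nabla(u^n_s-y^n_s)|^2\,ds
= -\int_t^T\int_{\bbR^d}\frac{\partial\Phi}{\partial s}(s,u^n_s-y^n_s)dx\,ds\\
&\quad+\int_t^T(\Phi'(s,u^n_s-y^n_s),f_s-\bar f_s)ds
-\sum_{i=1}^d\int_t^T\int_{\bbR^d}\Phi''(s,u^n_s-y^n_s)\partial_i(u^n_s-y^n_s)(g^i_s-\bar g^i_s)dx\,ds\\
&\quad+\sum_{j=1}^{d^1}\int_t^T(\Phi'(s,u^n_s-y^n_s),h^j_s-\bar h^j_s)\overleftarrow{dB}^j_s
+\frac12\sum_{j=1}^{d^1}\int_t^T\int_{\bbR^d}\Phi''(s,u^n_s-y^n_s)(h^j_s-\bar h^j_s)^2 dx\,ds\\
&\quad+\int_t^T\int_{\bbR^d}\Phi'(s,\tilde u^n_s-\tilde y^n_s)(\nu^{+,n}-\bar\nu^{+,n})(ds,dx)
-\int_t^T\int_{\bbR^d}\Phi'(s,\tilde u^n_s-\tilde y^n_s)(\nu^{-,n}-\bar\nu^{-,n})(ds,dx),
\end{split}
\end{equation*}
where $\nu^{-,n}$, $\bar\nu^{-,n}$ are the penalization measures corresponding to $K^{-,n}$, $\bar K^{-,n}$.

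Then I would pass to the limit $n\to+\infty$ term by term, reusing verbatim the convergence machinery already set up for Theorem \ref{maintheorem} in the linear case. By Lemma \ref{convergence:YZK} (applied to both families), $Y^n\to Y$, $\bar Y^n\to\bar Y$ uniformly in $L^2$, $Z^n\to Z$, $\bar Z^n\to\bar Z$ in $L^2$ on each $[0,\tau_N]$, $K^{\pm,n}\to K^\pm$, $\bar K^{\pm,n}\to\bar K^\pm$ uniformly (along a subsequence for the ``$-$'' parts), so in terms of the spatial functions $u^n\to u$, $y^n\to y$ in $\mathcal H_T$; the terms with $\frac{\partial\Phi}{\partial s}$, $\Phi'$ against $f-\bar f$ and against $h^j-\bar h^j$, and the $\Phi''$-terms containing $|\nabla(u^n-y^n)|^2$, $\partial_i(u^n-y^n)$, $(h^j-\bar h^j)^2$ all converge by the boundedness of $\Phi''$, the growth bound on $\partial_t\Phi$, the hypothesis $\Phi'(t,0)=0$, dominated convergence and a diagonal extraction (exactly as in the proof of Theorem \ref{Itoformula}). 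The measure terms are handled as there: write $\int_t^T\int\Phi'(s,\tilde u^n-\tilde y^n)d(\nu^{+,n}-\bar\nu^{+,n})$ back as $\mathbb E^m\int_t^T\Phi'(s,\tilde u^n_s(W_s)-\tilde y^n_s(W_s))\,d(K^{+,n}_s-\bar K^{+,n}_s)$, split off the Lipschitz error $|\Phi'(s,Y^n-\bar Y^n)-\Phi'(s,Y-\bar Y)|\le C\sup|Y^n-Y-\bar Y^n+\bar Y|$ dominated by the $L^1$-bounds on $K^{+,n}$, $\bar K^{+,n}$, and then use weak convergence of $dK^{+,n}\to dK^+$ (resp. $dK^{-,\delta(n)}\to dK^-$) together with the continuity of $s\mapsto\Phi'(s,\tilde u_s-\tilde y_s)$ (which holds since $\tilde u, \tilde y$ are quasicontinuous and hence $\tilde u_s(W_s)-\tilde y_s(W_s)$ has continuous trajectories). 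Finally, translating back from $(Y,Z,K^\pm)$, $(\bar Y,\bar Z,\bar K^\pm)$ to $(u,\nu^\pm)$, $(y,\bar\nu^\pm)$ via the identities $\nu^\pm(\varphi)=\mathbb E^m\int_0^T\varphi(t,W_t)dK^\pm_t$ gives the stated formula.

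The main obstacle, as usual in this circle of ideas, is the convergence of the measure terms $\int\Phi'(s,\tilde u^n-\tilde y^n)d\nu^{-,n}$ (and the corresponding $\bar\nu^{-,n}$): here $K^{-,n}$ is only known to be $L^2$-bounded on the stochastic intervals $[0,\tau_N]$ (Lemma \ref{boundnessofK}) and converges merely along a subsequence, so one must localize by $\tau_N$, argue the convergence there using $\lim_n\mathbb E\mathbb E^m\big[\sup_{[0,T]}|Y^n_s-\bar Y^n_s-(Y_s-\bar Y_s)|\,K^{-,n}_{\tau_N}\big]=0$ (Cauchy--Schwarz plus the $L^2$-bound), and then send $N\to+\infty$ using that $\tau_N=T$ a.e. for $N$ large. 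Everything else is a routine adaptation of the already-written proofs of Lemma \ref{itolowerobstacle} and Theorem \ref{Itoformula}.
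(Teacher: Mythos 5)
Your proposal is correct and follows essentially the same route as the paper: write down the two penalized DRBDSDEs, apply It\^o's formula to $\Phi$ of the difference of the penalized solutions (the formula for the difference of two penalized OSPDEs from Theorem 6 of \cite{DMZ12}, which the paper's own remark also invokes), pass to the limit term by term with the convergence machinery of Lemma \ref{convergence:YZK} and the weak convergence of $dK^{\pm,n}$ exactly as in the proof of Theorem \ref{Itoformula}, and translate back via the correspondence between $(u,\nu^{\pm})$, $(y,\bar{\nu}^{\pm})$ and the processes $(Y,Z,K^{\pm})$, $(\bar{Y},\bar{Z},\bar{K}^{\pm})$. Your explicit treatment of the $K^{-,n}$ terms (localization by $\tau_N$, subsequence extraction, Cauchy--Schwarz against the $L^2$-bound of Lemma \ref{boundnessofK}) is a point the paper only gestures at by reference, but it is the same argument.
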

\begin{proof}
We begin with the penalization equations of the corresponding DRBDSDEs, with obvious notations:
\begin{equation*}\begin{split}
Y_{t}^n =&\, \xi +\int_{t}^{T}f_s(W_s)ds - n \int_t^T (Y_s^n  - U_s)^{+} ds -\frac{1}{2}\int_{t}^{T}g_s*dW_s
+ \int_{t}^{T}h_s(W_s)\cdot\overleftarrow{dB}_s\\& -\sum_{i}\int_{t}^{T}Z^n_{i,s}dW_{s}^{i} + K_T^{+,n}-K_t^{+,n}
\end{split}\end{equation*}
and
\begin{equation*}\begin{split}
\bar{Y}_{t}^n =&\, \xi +\int_{t}^{T}\bar{f}_s(W_s)ds - n \int_t^T (\bar{Y}_s^n  - \bar{U}_s)^{+}ds -\frac{1}{2}\int_{t}^{T}\bar{g}_s*dW_s
+ \int_{t}^{T}\bar{h}_s(W_s)\cdot\overleftarrow{dB}_s \\&-\sum_{i}\int_{t}^{T}\bar{Z}^n_{i,s}dW_{s}^{i} + \bar{K}_T^{+,n}-\bar{K}_t^{+,n},
\end{split}\end{equation*}
where $\bar{Y}_t=y(t,W_t)$, $\bar{Z}_t=\nabla y(t,W_t)$ and $\bar{U}_t=\overline{o}(t,W_t)$.\\
Applying It\^o's formula to $\Phi(Y^n-\bar{Y}^n)$, for any $t\in[0,T]$, we have almost surely, 
\begin{equation*}\begin{split}
&\Phi(t,Y^n_t-\bar{Y}^n_t)=-\int_t^T\frac{\partial\Phi}{\partial s}(s,Y^n_s-\bar{Y}^n_s)ds-\int_t^T\Phi'(s,Y^n_s-\bar{Y}^n_s)(n(Y_s^n-U_s)^+-n(\bar{Y}_s^n-\bar{U}_s)^+)ds
\\&+\int_t^T\Phi'(s,Y^n_s-\bar{Y}^n_s)(f_s(W_s)-\bar{f}_s(W_s))ds-\frac{1}{2}\int_t^T\Phi'(s,Y^n_s-\bar{Y}^n_s)(g_s(W_s)-\bar{g}_s(W_s))*dW_s
\\&+\int_t^T\Phi'(s,Y^n_s-\bar{Y}^n_s)(h_s(W_s)-\bar{h}_s(W_s))\cdot\overleftarrow{dB}_s-\int_t^T\Phi'(s,Y^n_s-\bar{Y}^n_s)(Z^n_s-\bar{Z}^n_s)dW_s
\\&+\int_t^T\Phi'(s,Y^n_s-\bar{Y}^n_s)d(K_s^{+,n}-\bar{K}_s^{+,n})+\frac{1}{2}\int_t^T\Phi''(s,Y_s^n-\bar{Y}^n_s)|h_s(W_s)-\bar{h}_s(W_s)|^2ds
\\&-\frac{1}{2}\int_t^T\Phi''(s,Y_s^n-\bar{Y}^n_s)\langle g_s(W_s)-\bar{g}_s(W_s),Z^n_s-\bar{Z}^n_s\rangle ds-\frac{1}{2}\int_t^T\Phi''(s,Y_s^n-\bar{Y}^n_s)|Z^n_s-\bar{Z}^n_s|^2ds.
\end{split}\end{equation*}
Noting the following relation
\begin{equation*}\begin{split}
&\left|\int_t^T\Phi'(s,Y_s^n-\bar{Y}_s^n)d(K_s^{+,n}-\bar{K}_s^{+,n})-\int_t^T\Phi'(s,Y_s-\bar{Y}_s)d(K_s^{+}-\bar{K}_s^{+})\right|\\
\leq &\, \left|\int_t^T\Phi(s,Y_s^n-\bar{Y}_s^n)dK_s^{+,n}-\int_t^T\Phi'(s,Y_s-\bar{Y}_s)dK_s^+\right|
\\&\quad+\left|\int_t^T\Phi(s,Y_s^n-\bar{Y}_s^n)d\bar{K}_s^{+,n}-\int_t^T\Phi'(s,Y_s-\bar{Y}_s)d\bar{K}_s^+\right|\\
=&\,\left|\int_t^T(\Phi(s,Y_s^n-\bar{Y}_s^n)-\Phi'(s,Y_s-\bar{Y}_s))dK_s^{+,n}+\int_t^T\Phi'(s,Y_s-\bar{Y}_s)d(K_s^{+,n}-K_s^+)\right|\\
&\quad+\left|\int_t^T(\Phi(s,Y_s^n-\bar{Y}_s^n)-\Phi'(s,Y_s-\bar{Y}_s))d\bar{K}_s^{+,n}+\int_t^T\Phi'(s,Y_s-\bar{Y}_s)d(\bar{K}_s^{+,n}-\bar{K}_s^+)\right|.
\end{split}\end{equation*}
Then we can do a similar argument as in the proof of Theorem \ref{Itoformula}. 
Finally, due to the relation between $(u,\nu^+,\nu^-)$, $(y,\bar{\nu}^+,\bar{\nu}^-)$ and $(Y,Z,K^{+},K^-)$, $(\bar{Y},\bar{Z},\bar{K}^+,\bar{K}^-)$, we get the desired result. 
\end{proof}

\begin{remark}\label{remarkitoforpositivepart}
In the last two lemmas, we have proved an It\^o formula for a function $\Phi$ twice differentiable in space. Standard arguments based on an approximation of the function $x\longrightarrow (x^+ )^2$, see for example the proof of Lemma 7 in \cite{DMZ12}, permit to show  that formulas of Lemma \ref{itolowerobstacle} and Lemma \ref{itofordifference} still hold with $\Phi (x)=(x^+ )^2$ and in that case $\Phi'(x)=2x^+$ and $\Phi'' (x)=2{\bf 1}_{\{ x>0\}}$.
\end{remark}
{\red \section*{Acknowledgment}
The authors are very grateful to the editor and the anonymous referee for having pointed out several errors in the original version of this article and for their very valuable remarks and comments which have made it possible to improve it.}

\begin{center}
\begin{minipage}[t]{7cm}
Laurent DENIS \\
Le Mans Universit\'e\\
 Avenue Olivier Messiaen\\ F-72085 Le Mans Cedex 9, France \\
 e-mail: ldenis@univ-lemans.fr
\end{minipage}
\hfill
\begin{minipage}[t]{7cm}
 Anis MATOUSSI \\
Le Mans Universit\'e\\
 Avenue Olivier Messiaen\\ F-72085 Le Mans Cedex 9, France \\
email : anis.matoussi@univ-lemans.fr\\
and \\
CMAP,  Ecole Polytechnique, Palaiseau
\end{minipage}
\hfill \vspace*{0.8cm}
\begin{minipage}[t]{12cm}
Jing ZHANG \\
School of Mathematical Sciences \\
Fudan University,  Shanghai, China \\
Email: zhang\_jing@fudan.edu.cn
\end{minipage}
\end{center}

\end{document}